\documentclass[11pt]{article}
\usepackage{amsmath,amssymb,amsthm,amscd,esint}

\numberwithin{equation}{section}

\pagestyle{plain}
\setlength{\oddsidemargin}{12pt}
\setlength{\evensidemargin}{12pt}
\setlength{\textwidth}{15cm}
\setlength{\textheight}{21.5cm}
\DeclareMathOperator{\sing}{sing}

\DeclareMathOperator{\codim}{Codim}

\DeclareMathOperator{\supp}{Supp}

\DeclareMathOperator{\vol}{vol}
\DeclareMathOperator{\tr}{tr}

\DeclareMathOperator{\diam}{diam}

\DeclareMathOperator{\re}{Re}

\DeclareMathOperator{\Ric}{Ric}
\DeclareMathOperator{\id}{id}
\DeclareMathOperator{\iso}{iso}
\DeclareMathOperator{\reg}{reg}
\DeclareMathOperator{\amp}{amp}

\DeclareMathOperator{\exc}{exc}

\def\CC{{\mathbb C}}

\newtheorem{prop}{Proposition}[section]
\newtheorem{theo}[prop]{Theorem}
\newtheorem{lemm}[prop]{Lemma}
\newtheorem{clai}[prop]{Claim}
\newtheorem{coro}[prop]{Corollary}
\newtheorem{rema}[prop]{Remark}

\def\begeq{\begin{equation}}
\def\endeq{\end{equation}}

\begin{document}

\title{Bounding diameter of singular K\"ahler metric}

\author{Gabriele La Nave\thanks{Email: lanave@illinois.edu}\\
University of Illinoise at Urbana-Champaign
\\[5pt]
Gang Tian\thanks{Supported partially by
NSF and NSFC grants. Email: tian@math.princeton.edu}\\
Beijing University and Princeton University
\\[5pt]
Zhenlei Zhang\thanks{Supported partially by NSFC 11371256 and Chinese Scholarship Council. Email: zhleigo@aliyun.com}\\
Capital Normal University}
\date{}

\maketitle

\begin{abstract}
In this paper we investigate the differential geometric and algebro-geometric properties of the noncollapsing limit in the continuity method that was introduced by the first two named authors in \cite{LaTi14}.
\end{abstract}

\tableofcontents



\section{Introduction}

The Analytic Minimal Model Program through K\"ahler-Ricci flow was initiated by the second named author and his collaborators
(\cite{TiZh06}, \cite{SoTi07, SoTi12, SoTi09}, \cite{Ti08}). Many progresses have been made on the program (see
\cite{FoZh12}, 
\cite{So09, So13, So14, So14-2},
\cite{SoWe11, SoWe13, SoWe14},
\cite{ToWeYa}, \cite{Zh10} et al)
in addition to what mentioned above. However, there are some
serious difficulties in studying the singularity formation of the K\"ahler-Ricci flow because we do not know how to bound the Ricci curvature from below along the flow. To overcome these difficulties, in \cite{LaTi14}, the first and second named authors introduced a new continuity method. It provides an alternative way of studying the Analytic Minimal Model Program and
has the advantage of having Ricci curvature bounded from below along the deformation, so many analytic tools become available.

In this paper we investigate the differential geometric and algebro-geometric properties of the limit in the continuity method. We will focus on the noncollapsing case and confirm some conjectures proposed in \cite{LaTi14} under the algebraic assumption of the K\"ahler class. This may be seen as the first step toward the Minimal Model Program through the continuity equation.

The method to study the algebraic structure of the limit space is provided by the partial $C^0$ estimate for a polarized line bundle on a K\"ahler manifold, following the recent work of Donaldson-Sun \cite{DoSu14} and Tian \cite{Ti13} (also see \cite{Ti12}).
In our case, however, the considered will be a ``limit line bundle'' of polarized bundles, so it is not canonically related to the K\"ahler metric on the underlying manifold. In general, this may cause some problem in finding numerous of holomorphic peak sections. Similar problem also appears in K\"ahler-Ricci flow \cite{So14}. In \cite{So14}, Song proved the convergence of the normalized K\"ahler-Ricci flow on a minimal model of general type in the Cheeger-Gromov sense to a compact singular K\"ahler-Einstein metric on its canonical model. Song developed some analysis on the limit space to prove the theorem, namely the gradient estimate to the limit potential function, the gradient estimate to the holomorphic sections of canonical line bundle, the H\"ormander $L^2$ estimate and a diameter estimate to the singular K\"ahler-Einstein metric. These depend on the property of (real) codimension 4 singularity in the Cheeger-Gromov limit space so that one can construct good cut-offs on both the limit space and its tangent cones. In our case, the H\"ormander $L^2$ estimate holds automatically for the continuity method. The main difficulty in the continuity method is that the Cheeger-Gromov limit as well as its tangent cones may have (real) codimension 2 singularities. However, one can show by recursion that if this happens at a point in the tangent cone, then the singular set around that point must be a locally analytical set modeled by taking the limit of a divisor (ample locus of the limit K\"ahler class) on the original manifold. The picture fits into the case of convergence of conical K\"ahler-Einstein manifolds as considered in \cite{Ti12}.

We begin with a compact projective manifold $M$ with a K\"ahler metric $\omega_0\in qc_1(L^{'})$ where $L'$ is a line bundle, $q$ is a positive rational number. Consider a 1-parameter family of equations \cite{LaTi14}
\begin{equation}\label{M-A: continuity}
\omega=\omega_0-t\Ric(\omega).
\end{equation}
The equation is solvable up to
\begin{equation}
T=:\sup\{t|[\omega_0]-tc_1(M)>0\}.
\end{equation}

In the special case when the limit does not collapse, we have the following theorem.

\begin{theo}[\cite{LaTi14}]\label{convergence: current}
Assume $T<\infty$ and $([\omega_0]-Tc_1(M))^n>0$, then $\omega_t$ converge to a unique weakly K\"ahler metric $\omega_T$ which is smooth outside of a subvariety $\mathcal{S}_M$ and satisfies
\begin{equation}
\omega_T=\omega_0-T\Ric(\omega_T),\,\mbox{ on }M\backslash\mathcal{S}_M.
\end{equation}
\end{theo}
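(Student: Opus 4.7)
The plan is to recast (\ref{M-A: continuity}) as a family of complex Monge-Amp\`ere equations and pass to the limit $t\nearrow T$ using pluripotential theory on the big nef class $[\omega_0]-Tc_1(M)$.

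First, fix a smooth representative $\theta\in c_1(K_M)$ and a volume form $\Omega$ with $-\Ric(\Omega)=\theta$, and set $\chi_t:=\omega_0+t\theta$. Writing a solution as $\omega_t=\chi_t+\sqrt{-1}\partial\bar\partial\varphi_t$ and normalizing suitably reduces (\ref{M-A: continuity}) to the complex Monge-Amp\`ere equation
$$(\chi_t+\sqrt{-1}\partial\bar\partial\varphi_t)^n=e^{\varphi_t/t+c_t}\Omega.$$
The first goal is a uniform $L^\infty$ estimate $\|\varphi_t\|_{L^\infty}\le C$ as $t\nearrow T$. The upper bound follows from the maximum principle applied to this equation: at a maximum point of $\varphi_t$ one has $\omega_t\le\chi_t$, which controls the exponent. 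For the lower bound one invokes the Ko\l odziej / Eyssidieux-Guedj-Zeriahi $L^\infty$ estimate for degenerate Monge-Amp\`ere equations on big classes; the assumption $([\omega_0]-Tc_1(M))^n>0$ enters precisely to guarantee positivity of the volume and uniformity of the relevant integral comparisons in $t$.

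Next, define $\mathcal{S}_M$ to be the non-K\"ahler (augmented base) locus of the big nef class $[\omega_0]-Tc_1(M)$, which by Boucksom is a proper closed analytic subvariety of $M$. On each relatively compact $U\Subset M\setminus\mathcal{S}_M$, the class admits a local K\"ahler representative $\tilde\chi$; I would then apply a Chern-Lu/Aubin-Yau Laplacian inequality for $\omega_t$ against $\tilde\chi$, coupled to an auxiliary $\chi_T$-plurisubharmonic weight with logarithmic singularity along $\mathcal{S}_M$, to obtain a uniform two-sided bound $\Lambda^{-1}\tilde\chi\le\omega_t\le\Lambda\tilde\chi$ on $U$ for all $t$ close to $T$. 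Evans-Krylov plus Schauder bootstrapping then yields uniform $C^{k,\alpha}_{\mathrm{loc}}(M\setminus\mathcal{S}_M)$ bounds. Arzel\`a-Ascoli produces a subsequential smooth limit $\omega_T$ on $M\setminus\mathcal{S}_M$ satisfying $\omega_T=\omega_0-T\Ric(\omega_T)$ there, and the $L^\infty$ bound on $\varphi_t$ extends $\omega_T$ across $\mathcal{S}_M$ as a positive closed current in $[\omega_0]-Tc_1(M)$. Uniqueness of bounded solutions to the degenerate Monge-Amp\`ere equation in a big class (Ko\l odziej, Dinew) then upgrades subsequential to full convergence and identifies $\omega_T$ uniquely.

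The main obstacle is the uniform $C^2$ estimate above: because the reference form $\chi_t$ degenerates along $\mathcal{S}_M$ as $t\to T$, Yau's global $C^2$ estimate is unavailable, and one is forced to work with a locally chosen K\"ahler potential on $M\setminus\mathcal{S}_M$ while absorbing the loss of control near $\mathcal{S}_M$ into a quasi-plurisubharmonic weight that blows up logarithmically along $\mathcal{S}_M$ and is $\chi_t$-plurisubharmonic uniformly in $t$. Constructing such a weight with calibrated constants---and verifying that the resulting maximum-principle argument on a quantity like $\log\tr_{\tilde\chi}\omega_t$ minus this weight produces a bound independent of $t$---is the technical heart of the argument.
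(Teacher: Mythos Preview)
Your outline is correct and follows the standard template for degenerate Monge--Amp\`ere convergence on a big nef class. Note, however, that the paper does not give a self-contained proof of this theorem: it is quoted from \cite{LaTi14}, and Section~2 only records the a priori estimates needed later. So the comparison is really between your sketch and the estimates of Section~2.

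The main methodological difference is in the $C^2$ step. You propose working locally on $M\setminus\mathcal{S}_M$ with an auxiliary K\"ahler form $\tilde\chi$ and a quasi-psh weight with logarithmic pole along $\mathcal{S}_M$ (the Tsuji--P\u{a}un device). The paper instead invokes the base-point-free theorem to produce a \emph{global} smooth semipositive form $\eta_T=\frac{1}{\ell_0}\Phi^*\omega_{FS}$ in the limit class, sets $\eta_t=\frac{T-t}{T}\omega_0+\frac{t}{T}\eta_T$ as reference, and then Yau's Schwarz lemma (Proposition~\ref{metric comparison: prop}) gives $\eta_T\le C\omega_t$ directly; smoothness of $\omega_T$ on the regular locus of $\Phi$ follows from this together with the volume-form bound (Corollary~\ref{smooth on regular locus}). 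Your route avoids the algebraic input (Kawamata base-point-freeness) at the cost of a more delicate local maximum-principle argument with a singular barrier; the paper's route is global and cleaner but relies on that algebro-geometric theorem. A second, smaller difference: you upgrade subsequential to full convergence via uniqueness of bounded solutions, whereas Lemma~2.4 derives it directly from a monotonicity argument, showing $\big(\frac{u_t}{t}-n\log t\big)$ is decreasing by applying the maximum principle to the $t$-derivative of the Monge--Amp\`ere equation.
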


Here, the subvariety $\mathcal{S}_M$ equals the non-ample locus of the cohomological class
\begin{equation}
\mathcal{S}_M=\bigcap\{D| D\mbox{ is a divisor satisfying }[\omega_0]-Tc_1(M)-\epsilon[D]>0 \mbox{ for some }\epsilon>0\}.
\end{equation}

It is conjectured in \cite[Conjecture 4.1]{LaTi14} that the limit space has more regular properties, such as metric structure and algebraic structure, and the convergence of $(M,\omega_t)$ takes place in the Cheeger-Gromov topology. In this note we confirm this conjecture partially.

\begin{theo}\label{convergence: CG}
Assume as in above theorem. Then
\begin{itemize}

\item[{\rm(1)}] $(M,\omega_t)$ converges in the Cheeger-Gromov topology to a compact path metric space $(M_T,d_T)$ which is the metric completion of $(M\backslash\mathcal{S}_M,\omega_T)$;

\item[{\rm(2)}] $M_T$ has regular/singular decomposition $M_T=\mathcal{R}\cup\mathcal{S}$, a point $x\in\mathcal{R}$ iff the tangent cone at $x$ is $\mathbb{C}^n$;

\item[{\rm(3)}] $\mathcal{S}$ is closed and has real codimension $\ge 2$, $\mathcal{R}$ is geodesically convex;

\item[{\rm(4)}] $M_T$ is homeomorphic to a normal projective variety with $\mathcal{S}$ corresponding to a subvariety.



\end{itemize}
\end{theo}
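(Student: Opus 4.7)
The plan is to treat the four assertions in sequence, with the bulk of the work going into (1) (diameter bound and Cheeger--Gromov compactness) and (4) (algebraization via a partial $C^0$ estimate for a limit line bundle).

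\medskip

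First, along the continuity equation \eqref{M-A: continuity} one has $\Ric(\omega_t)\ge-\tfrac{1}{t}\omega_t$, so the Ricci curvature stays uniformly bounded below as $t\to T$. Combined with the noncollapsing hypothesis $([\omega_0]-Tc_1(M))^n>0$, which yields a uniform lower volume bound on $(M,\omega_t)$, the whole flow lives in a Gromov-precompact family once one controls the diameter. The diameter bound is the first serious step: I would establish it by combining the smooth convergence of $\omega_t\to\omega_T$ on compact subsets of $M\setminus\mathcal S_M$ (Theorem~\ref{convergence: current}) with a Bishop--Gromov type volume comparison, exactly as in Song's treatment \cite{So14} of the K\"ahler--Ricci flow on minimal models of general type. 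Once $\diam(M,\omega_t)\le C$, Gromov's compactness theorem yields subsequential Cheeger--Gromov limits $(M_T,d_T)$, and uniqueness together with the identification $(M_T,d_T)=$ metric completion of $(M\setminus\mathcal S_M,\omega_T)$ follows because $d_T$ is determined by $\omega_T$ on the smooth locus and the complement $\mathcal S_M$ is a proper subvariety of uniformly bounded Minkowski content in $\omega_t$.

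\medskip

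For (2) and (3), the lower Ricci bound places $(M_T,d_T)$ in the noncollapsed Ricci limit category of Cheeger--Colding. I would define $\mathcal R$ as the set of points whose unique tangent cone is $\mathbb C^n$ and $\mathcal S=M_T\setminus\mathcal R$, noting that the volume convergence theorem gives Hausdorff codimension $\ge 2$ for $\mathcal S$. Closedness of $\mathcal S$ and geodesic convexity of $\mathcal R$ are by now standard: geodesic convexity follows from the $\varepsilon$-regularity theorem and the fact that the set where the renormalized volume ratio is $\ge 1-\varepsilon$ is open; closedness of $\mathcal S$ then comes from propagating this $\varepsilon$-regularity along limiting geodesics. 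The novelty here, compared with the K\"ahler--Ricci flow case, is that the tangent cones may a priori have real codimension $2$ singularities, so one cannot directly invoke Cheeger--Colding--Tian codimension~$4$; the substitute, as advertised in the introduction, is a recursive identification of such codimension~$2$ strata with limits of divisors on $M$ in the non-ample locus direction, modelled on the conical K\"ahler--Einstein analysis of \cite{Ti12}.

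\medskip

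The deepest part is (4), where the goal is to give $M_T$ a normal projective structure. I would fix a divisible integer $\ell$ so that $\ell([\omega_0]-Tc_1(M))$ is an integral class and realize it as $c_1$ of a line bundle $L_T:=\ell\,L'-\ell T K_M^{-1}$ suitably interpreted; this is the ``limit line bundle'' whose metric is $h_T$ with curvature $\ell\omega_T$ on $M\setminus\mathcal S_M$. The strategy is then Donaldson--Sun \cite{DoSu14}/Tian \cite{Ti13}: establish a partial $C^0$ estimate
\[
\inf_{x\in M}\sum_{s\in\text{ONB}(H^0(M,L_T^k))}|s|^2_{h_T^k}(x)\ge c(k)>0
\]
for a sequence $k\to\infty$, use $L^2$-orthonormal peak sections (the H\"ormander estimate is available in this setting, as emphasized in the introduction) to produce a projective embedding $\Phi_k:M\hookrightarrow\mathbb{CP}^{N_k}$ whose images converge to a normal projective variety $W$ homeomorphic to $M_T$, with $\mathcal S$ corresponding to a subvariety. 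The main obstacle, and the reason the argument is nonstandard, is constructing holomorphic peak sections centered near codimension~$2$ singularities in tangent cones: the usual cutoff + H\"ormander trick on a punctured ball loses the peak property when the singular set has real codimension $2$. I would attack this by the recursive structure from step~(3), constructing cutoffs that vanish to appropriate order along the limiting divisorial strata and exploiting the conical K\"ahler--Einstein model at such points to produce replacement peak sections. Once the partial $C^0$ estimate is in place, the Hausdorff convergence of $\Phi_k(M)$ to a projective variety and the homeomorphism with $M_T$ follow along the lines of \cite{DoSu14,Ti13}, using (2) and (3) to check that the map is a bijection on the regular part and sends singular strata to subvarieties.
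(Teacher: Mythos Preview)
Your overall architecture matches the paper in spirit, but the logical order is inverted at the most delicate point, and this is a genuine gap.

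You propose to obtain the uniform diameter bound of $(M,\omega_t)$ as the \emph{first} step, by ``smooth convergence on $M\setminus\mathcal S_M$ plus Bishop--Gromov, as in Song.'' In the paper this argument appears only at the very end (\S4.2), and it takes as \emph{input} the finiteness of $R_0=\diam_{d_T}(M_T)$. That finiteness is not free: the paper first works with \emph{pointed}, a~priori noncompact, Gromov--Hausdorff limits (\S3.2), then builds the entire partial $C^0$ machinery to show that the induced map $\Phi_T:M_T\to\Phi(M)$ is injective and a local homeomorphism (Propositions~\ref{limit map: injectivity}, \ref{limit map: local homeomorphism}), and then---crucially---uses Song's normal-direction estimate after blowing up a point of $D$ (Corollary~\ref{diameter bound: normal estimate}) to prove $\Phi_T$ is \emph{surjective} (Corollary~\ref{limit map: surjectivity}). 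Only then is $M_T$ homeomorphic to the compact variety $\Phi(M)$, hence compact, and only then does the Bishop--Gromov comparison of \S4.2 yield $\diam(M,\omega_t)\le C$. The paper even remarks explicitly, right after Corollary~\ref{diameter bound: normal estimate}, that the normal-direction bound alone does \emph{not} give a diameter bound of $(M\setminus D,\omega_T)$. So your Step~1 as written is circular: the Bishop--Gromov argument needs a compact exhausting set $K$ of bounded $\omega_T$-diameter, and producing such a $K$ is exactly what requires the algebraic structure of Part~(4).

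A secondary difference in Part~(4): you outline the Donaldson--Sun scheme with varying embeddings $\Phi_k$ and Hausdorff-converging images. The paper instead fixes once and for all the map $\Phi:M\to\mathbb{C}P^N$ coming from the base-point-free theorem for $L=\ell_0(L'+TK_M)$, and shows that the single limit map $\Phi_T$ is a homeomorphism onto the fixed image $\Phi(M)$; peak sections of $L^k$ for large $k$ are used to separate points, and an effective finite-generation (Skoda division) argument brings the separation back down to $\Phi$ itself. Your outline of the codimension-$2$ cutoff issue and its resolution by recursion on tangent cones (following \cite{Ti12}) is correct and matches Lemma~\ref{lemm:partial-6}. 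For (3), the paper obtains geodesic convexity of $\mathcal R$ from Colding--Naber H\"older continuity of tangent cones rather than from $\varepsilon$-regularity propagation; either route works.
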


\begin{rema}
Our proof here used the Kawamata base point free theorem. We hope to remove this constraint eventually and
prove the Kawamata base point free theorem for a general line bundle by
applying a refinement of our method here together with some arguments in {\rm\cite{So14-2}}.
\end{rema}

\begin{rema}
The next step toward the Minimal Model Program is to construct flips and deform the continuity equation {\rm(\ref{M-A: continuity})} through the singular time $T$. This is basically the same as the K\"ahler-Ricci flow in {\rm\cite{SoTi09}}. We will leave the discussion in a forthcoming paper.
\end{rema}

\noindent
{\bf Acknowledgement}: The third named author is visiting the Math Department of Princeton University. He would like to thank the department for the nice working environment. He also want to thank Jian Song for his discussion on the Proposition 3.22.



\section{A prior estimate to the continuity equation}

In this section, we present some estimate to the continuity equation (\ref{M-A: continuity}). By a dilation of the cohomological class $[\omega_0]$ and the time parameter, we may assume that $[\omega_0]=c_1(L^{'})$ for some line bundle $L^{'}$. The rationality theorem of Kawamata \cite{Ka84} says that $T\in\mathbb{Q}$. Take a positive integer $\ell_0$ such that $T\ell_0\in\mathbb{Z}$, and define the``limit line bundle'' $L=\ell_0(L^{'}+TK_M)$.

Let $\omega_t$, $t\in[0,T)$, be the solution to (\ref{M-A: continuity}) with initial metric $\omega_0$. Since the limit class $[\omega_0]+TK_M$ is nef and big, according to the base point free theorem \cite{KoMo}, we may assume $\ell_0$ is chosen such that $L$ has no base points. Let $H_0$ be any Hermitian metric on $L$. An orthonormal basis of $(L,H_0)$ at time $t=0$ gives a holomorphic map
\begin{equation}
\Phi:M\rightarrow\mathbb{C}P^N
\end{equation}
where $N=\dim H^0(M;L)-1$. Denote by $\omega_{FS}$ the Fubini-Study metric of $\mathbb{C}P^N$, then the pull back form $\eta_T=\frac{1}{\ell_0}\Phi^*\omega_{FS}$ defines a smooth metric on $M_{\reg}$, the set of regular points of $\Phi$.

 By putting $\eta_t=\frac{T-t}{T}\omega_0+\frac{t}{T}\eta_T,$ a family of background metrics, the solution $\omega_t$ can be written as
\begin{equation}
\omega_t=\eta_t+\sqrt{-1}\partial\bar{\partial}u_t.
\end{equation}
Since $\frac{1}{T}(\omega_0-\eta_T)\in c_1(M)$, there is a smooth volume form $\Omega$ on $M$ such that
\begin{equation}
\Ric(\Omega)=\frac{1}{T}(\omega_0-\eta_T).
\end{equation}
Then, the continuity equation (\ref{M-A: continuity}) is equivalent to
\begin{equation}\label{M-A: continuity 2}
\big(\eta_t+\sqrt{-1}\partial\bar{\partial}u_t\big)^n=e^{\frac{u_t}{t}}\Omega.
\end{equation}
By \cite{LaTi14}, the solution $(M,\omega_t)$ converges smoothly outside $\mathcal{S}_M$ to a limit metric $\omega_T$ satisfying
\begin{equation}
\Ric(\omega_T)=\frac{1}{T}(\omega_0-\omega_T),\,\mbox{ on }M\backslash\mathcal{S}_M.
\end{equation}

\begin{lemm}
There is a constant $C$ independent of $t$ such that
\begin{equation}\label{potential bound: C^0}
\|u_t\|_{C^0}\le C,\,\forall t\in[\frac{T}{2},T).
\end{equation}
\end{lemm}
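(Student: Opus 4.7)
The plan is to bound $u_t$ from above and below separately. The upper bound is a direct maximum-principle argument; the lower bound requires a pluripotential-theoretic $L^{\infty}$-estimate that is uniform in $t$ as $t\nearrow T$.

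For the upper bound, at a point $p\in M$ where $u_t$ attains its maximum one has $\sqrt{-1}\partial\bar\partial u_t(p)\le 0$, so $\omega_t(p)\le\eta_t(p)$ as Hermitian forms and hence $\omega_t^n(p)\le\eta_t^n(p)$. Combined with (\ref{M-A: continuity 2}) this gives
\[
e^{u_t(p)/t}\,\Omega(p)\le\eta_t^n(p).
\]
Since $\{\eta_t\}_{t\in[T/2,T]}$ stays in a bounded family of smooth $(1,1)$-forms, the density $\eta_t^n/\Omega$ is uniformly bounded, so $\sup_M u_t\le Ct\le CT$.

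For the lower bound I would normalize $v_t:=u_t-\sup_M u_t\le 0$. The function $v_t$ is $\eta_t$-plurisubharmonic and satisfies
\[
\bigl(\eta_t+\sqrt{-1}\partial\bar\partial v_t\bigr)^n=e^{v_t/t}\,e^{(\sup_M u_t)/t}\,\Omega.
\]
By the upper bound just established, the right-hand side is uniformly bounded above by $C\Omega$. At the same time, the cohomology class $[\eta_t]=[\omega_0]-tc_1(M)$ varies continuously in $t$, with volume $[\eta_t]^n$ converging to $([\omega_0]-Tc_1(M))^n>0$ by hypothesis; in particular $[\eta_t]$ is uniformly big on $[T/2,T]$. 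Invoking the Kolodziej $L^{\infty}$-estimate for Monge--Amp\`ere equations, in the form extended by Eyssidieux-Guedj-Zeriahi to semi-positive, uniformly big backgrounds, yields $\|v_t\|_{L^{\infty}}\le C$ with $C$ independent of $t$. Combined with the upper bound this gives (\ref{potential bound: C^0}).

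The main obstacle is the uniformity of the pluripotential $L^{\infty}$-estimate as $t\nearrow T$: the background form $\eta_t$ loses strict positivity along $\mathcal{S}_M$ in the limit, and a direct application of Kolodziej's theorem to each K\"ahler class $[\eta_t]$ separately can produce a constant that deteriorates as $t\to T$. One must instead exploit the uniform bigness of the limit class $[\eta_T]$, either through the Eyssidieux-Guedj-Zeriahi capacity comparisons or by passing to a fixed $(\eta_T+\epsilon\omega_0)$-plurisubharmonic framework, to obtain a constant that is uniform in $t$.
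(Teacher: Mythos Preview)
Your proposal is correct and follows essentially the same route as the paper: an upper bound by the maximum principle applied to (\ref{M-A: continuity 2}), followed by a uniform $L^\infty$ lower bound via the Ko{\l}odziej-type capacity estimate in the semi-positive/big setting, with the paper citing exactly the references you invoke (\cite{Ko98}, \cite{Zh06}, \cite{EyGuZe09}). Your explicit discussion of the uniformity obstacle as $t\nearrow T$ is a useful elaboration of what the paper leaves to those references.
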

\begin{proof}
The uniform upper bound of $u_t$ is trivial consequence of the maximum principle. The $L^\infty$ bound follows from the capacity calculation of Ko{\l}odziej \cite{Ko98}; see also \cite{Zh06} or \cite{EyGuZe09} for exactly our case when $u_t$ has a uniform upper bound.
\end{proof}

\begin{coro}
There exists $C$ independent of $t$ such that
\begin{equation}
C^{-1}\Omega\le \omega_t^n\le C\Omega,\,\forall t\in[\frac{T}{2},T).
\end{equation}
\end{coro}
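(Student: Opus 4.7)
The plan is to read off the corollary directly from the Monge-Ampère form of the continuity equation, namely (\ref{M-A: continuity 2}),
\[
\omega_t^n = \bigl(\eta_t+\sqrt{-1}\partial\bar{\partial}u_t\bigr)^n = e^{u_t/t}\,\Omega,
\]
combined with the uniform $C^0$ bound on $u_t$ obtained in the previous lemma.

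More concretely, first I would simply note that the identity above holds pointwise on $M$ for each $t\in[T/2,T)$, so that bounding $\omega_t^n/\Omega$ uniformly from above and below is equivalent to bounding $e^{u_t/t}$ uniformly from above and below. Then I would use the inequality
\[
\|u_t\|_{C^0}\le C,\qquad t\in\bigl[\tfrac{T}{2},T\bigr),
\]
from (\ref{potential bound: C^0}), together with the elementary fact that $1/t\le 2/T$ on the interval $[T/2,T)$, to conclude that
\[
\bigl|u_t/t\bigr|\le \frac{2C}{T}\qquad \text{for all }t\in\bigl[\tfrac{T}{2},T\bigr).
\]
Setting $C':=e^{2C/T}$ then yields
\[
(C')^{-1}\Omega\le \omega_t^n\le C'\,\Omega
\]
uniformly in $t\in[T/2,T)$, which is exactly the conclusion.

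There is no real obstacle here: every ingredient has already been established. The only minor care point is that one works on the subinterval $[T/2,T)$ rather than all of $[0,T)$, which is used both to keep $1/t$ bounded and to appeal to the lemma as stated; this is precisely why the corollary is only asserted on $[T/2,T)$.
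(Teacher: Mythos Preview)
Your proof is correct and is exactly the intended argument: the paper states this as an immediate corollary of the preceding $C^0$ bound on $u_t$ together with the Monge--Amp\`ere equation (\ref{M-A: continuity 2}), without writing out any further details.
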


\begin{lemm}
There exists $C$ independent of $t$ such that
\begin{equation}
u^{'},\,u^{''}\le C,\,\forall \frac{T}{2}\le t<T.
\end{equation}
\end{lemm}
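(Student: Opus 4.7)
The plan is to linearize the complex Monge--Amp\`ere equation (\ref{M-A: continuity 2}) by differentiating in $t$, once and then a second time, and to apply the maximum principle on the compact manifold $M$ with carefully chosen test functions. Taking the logarithm of (\ref{M-A: continuity 2}) and differentiating in $t$ yields the linearized identity
\begin{equation*}
\Delta_{\omega_t}u' + \tr_{\omega_t}\dot\eta_t = \frac{u'}{t} - \frac{u}{t^2},
\end{equation*}
where $\dot\eta_t = (\eta_T - \omega_0)/T$ is independent of $t$.

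To bound $u'$ from above, a direct maximum principle applied to $u'$ fails because $\tr_{\omega_t}\dot\eta_t$ has no a priori sign. Instead I would test against the combination $G_t := tu' - u$. The crucial input is the elementary identity
\begin{equation*}
\eta_t - t\dot\eta_t = \frac{T-t}{T}\omega_0 + \frac{t}{T}\eta_T - \frac{t}{T}(\eta_T - \omega_0) = \omega_0,
\end{equation*}
which records the affine dependence of $\eta_t$ on $t$. Combined with $\Delta_{\omega_t}u = n - \tr_{\omega_t}\eta_t$, the linearized equation gives
\begin{equation*}
\Delta_{\omega_t}G_t = \frac{G_t}{t} - n + \tr_{\omega_t}\omega_0.
\end{equation*}
At an interior maximum of $G_t$ on $M$ the left side is $\le 0$ and $\tr_{\omega_t}\omega_0 \ge 0$, so $G_t \le nt \le nT$. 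Together with the $L^\infty$ bound (\ref{potential bound: C^0}) this yields $u' \le C$ on $[T/2, T)$.

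For the bound on $u''$ I would differentiate the linearized identity once more in $t$. Using $\ddot\eta_t = 0$ (again by the affine dependence) and $\tfrac{d}{dt}g_t^{i\bar j} = -g_t^{i\bar k}g_t^{l\bar j}(\dot\omega_t)_{k\bar l}$ one obtains
\begin{equation*}
\Delta_{\omega_t}u'' = \frac{u''}{t} + \|\dot\omega_t\|^2_{\omega_t} - \frac{2u'}{t^2} + \frac{2u}{t^3}.
\end{equation*}
At an interior maximum of $u''$ the left side is $\le 0$, hence
\begin{equation*}
\frac{u''}{t} \le -\|\dot\omega_t\|^2_{\omega_t} + \frac{2u'}{t^2} - \frac{2u}{t^3} \le \frac{2u'}{t^2} - \frac{2u}{t^3},
\end{equation*}
using $\|\dot\omega_t\|^2_{\omega_t} \ge 0$ in the last step. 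The right-hand side is uniformly bounded on $[T/2,T)$ by the bound on $u'$ just established and by (\ref{potential bound: C^0}).

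The main subtlety lies in the first step: the particular choice $G_t = tu' - u$, together with the identity $\eta_t - t\dot\eta_t = \omega_0$, is what converts the indefinite term $\tr_{\omega_t}\dot\eta_t$ into the manifestly nonnegative $\tr_{\omega_t}\omega_0$. Once the bound on $u'$ is in hand, the bound on $u''$ is essentially automatic, because at a maximum of $u''$ the square $\|\dot\omega_t\|^2_{\omega_t}$ generated by differentiating $\tr_{\omega_t}\dot\omega_t$ a second time appears with the favorable sign.
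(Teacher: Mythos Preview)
Your proposal is correct and follows essentially the same route as the paper: both test $tu'-u$ against the linearized equation, use $\eta_t-t\dot\eta_t=\omega_0$ to produce the nonnegative term $\tr_{\omega_t}\omega_0$, and then differentiate once more and apply the maximum principle directly to $u''$. The only cosmetic difference is that the paper repackages the second-order identity as $t^2\Delta u''=tu''+|\omega-t\omega'|^2-n$, which yields $u''\le n/t$ at the maximum without invoking the bounds on $u$ and $u'$; your version uses those bounds instead, but the argument is the same.
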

\begin{proof}
If there is no confusion, we simply denote $\omega_t$ by $\omega$. Differentiating the Monge-Amp\`ere equation (\ref{M-A: continuity 2}) to get
\begin{equation}\label{M-A: continuity 3}
\tr_\omega\omega^{'}=\frac{1}{t^2}\big(tu^{'}-u\big)
\end{equation}
where
\begin{equation}\label{M-A: continuity 3.5}
\omega^{'}=\frac{1}{T}(\eta_T-\omega_0)+\sqrt{-1}\partial\bar{\partial}u^{'}
=\frac{1}{t}\big(\omega-\omega_0-\sqrt{-1}\partial\bar{\partial}u\big)
+\sqrt{-1}\partial\bar{\partial}u^{'}.
\end{equation}
Thus,
\begin{equation}\label{M-A: continuity 4}
\triangle(tu^{'}-u)=\tr_\omega\big(t\omega^{'}-\omega+\omega_0\big)
=\frac{1}{t}\big(tu^{'}-u\big)+\tr_\omega\omega_0-n.
\end{equation}
Then applying the maximum principle we derive $tu^{'}-u\le C$. To get the upper bound of $u^{''}$ we first observe that (\ref{M-A: continuity 3}) gives $tu^{'}-u=t^2\tr_\omega\omega^{'}$. Taking derivation,
$$tu^{''}=2t\tr_\omega\omega^{'}+t^2\triangle u^{''}-t^2|\omega^{'}|^2=t^2\triangle u^{''}-|\omega-t\omega^{'}|^2+n.$$
So,
\begin{equation}\label{M-A: continuity 5}
t^2\triangle u^{''}=tu^{''}+|\omega-t\omega^{'}|^2-n.
\end{equation}
Then apply the maximum principle.
\end{proof}

\begin{lemm}
The function $u_t$ converges uniformly to a bounded function $u_T$ satisfying
\begin{equation}\label{M-A: continuity 6}
\big(\eta_T+\sqrt{-1}\partial\bar{\partial}u_T\big)^n=e^{\frac{u_T}{T}}\Omega
\end{equation}
in the current sense.
\end{lemm}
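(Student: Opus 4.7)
My plan is to extract a pointwise limit from a monotonicity in $t$, identify it as a bounded $\eta_T$-plurisubharmonic solution of the limiting Monge-Amp\`ere equation, and then upgrade pointwise to uniform convergence via continuity of the limit.

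\textbf{Monotonicity and pointwise convergence.} Let $C$ denote the upper bound for $u'_t$ from the previous lemma. For any $A\ge C$, the function $\tilde u_t(x):=u_t(x)+A(T-t)$ satisfies $\partial_t\tilde u_t=u'_t-A\le 0$, so it is non-increasing in $t$ on $[T/2,T)$ for each $x\in M$. Combined with the uniform bound \eqref{potential bound: C^0}, $\tilde u_t(x)$ decreases to a bounded limit as $t\nearrow T$; therefore $u_t$ converges pointwise to a bounded function $u_T$ with $\|u_T\|_\infty\le C$. Dominated convergence upgrades this to $L^1(M)$ convergence, and since $u_t$ is $\eta_t$-plurisubharmonic while $\eta_t\to\eta_T$ smoothly, the limit $u_T$ (equivalently, its upper semicontinuous regularization) is $\eta_T$-plurisubharmonic on $M$.

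\textbf{Passing to the limit in the Monge-Amp\`ere equation.} The right-hand side $e^{u_t/t}\Omega$ is dominated by a uniform constant multiple of $\Omega$ and converges pointwise to $e^{u_T/T}\Omega$, hence weakly as measures. For the left-hand side, note that $\eta_t-\eta_T=(T-t)\Ric(\Omega)$; on a local chart choose a smooth potential $g$ with $\sqrt{-1}\partial\bar\partial g=\Ric(\Omega)$ and set $v_t:=u_t+(T-t)g+A(T-t)$. Then $v_t$ is locally a uniformly bounded, monotone non-increasing sequence of $\eta_T$-plurisubharmonic functions, and one checks that $\eta_T+\sqrt{-1}\partial\bar\partial v_t=\eta_t+\sqrt{-1}\partial\bar\partial u_t=\omega_t$. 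The Bedford-Taylor continuity of the complex Monge-Amp\`ere operator under bounded decreasing sequences of (quasi-)plurisubharmonic functions then gives local weak convergence $(\eta_T+\sqrt{-1}\partial\bar\partial v_t)^n\to(\eta_T+\sqrt{-1}\partial\bar\partial u_T)^n$, which patches globally on $M$. Equating both sides of \eqref{M-A: continuity 2} yields \eqref{M-A: continuity 6} in the current sense.

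\textbf{Uniform convergence.} By Ko\l odziej's $L^\infty$ stability for Monge-Amp\`ere equations, extended to big and nef cohomology classes in \cite{EyGuZe09}, any bounded $\eta_T$-plurisubharmonic solution of \eqref{M-A: continuity 6} with bounded right-hand side is continuous on $M$; hence $u_T\in C^0(M)$. Dini's theorem applied to the monotone sequence $\tilde u_t\searrow u_T$ on the compact manifold $M$ now gives uniform convergence, and since the correction $A(T-t)\to 0$ vanishes uniformly, we conclude $u_t\to u_T$ uniformly.

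\textbf{Main obstacle.} The delicate point is the passage to the weak limit of the Monge-Amp\`ere masses when the reference form $\eta_t$ is itself moving, as the classical Bedford-Taylor continuity theorem is stated for a fixed background form. Absorbing the smooth discrepancy $\eta_t-\eta_T=(T-t)\Ric(\Omega)$ into local psh potentials to restore a fixed reference form is the key bookkeeping step; once this is done, monotonicity and boundedness make the remaining pluripotential-theoretic input routine in this setting.
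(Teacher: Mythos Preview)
Your proof is correct but proceeds along a somewhat different path than the paper's. Both arguments hinge on exhibiting a quantity that is monotone in $t$, but the choices differ: the paper applies the maximum principle to the inequality
\[
t\,\triangle\Big(\frac{u}{t}-n\log t\Big)' \;\ge\; \Big(\frac{u}{t}-n\log t\Big)',
\]
derived from the identity \eqref{M-A: continuity 4}, to conclude that $\frac{u}{t}-n\log t$ is decreasing; you instead recycle the bound $u'\le C$ from the preceding lemma to make $u_t+A(T-t)$ decreasing directly. Your monotonicity step is the more economical one, since it avoids a fresh maximum principle computation. Conversely, you are considerably more explicit than the paper about the passage to the limit: the paper simply asserts that ``consequently, $u_t$ converges uniformly'' and that the limit equation is ``obvious'', while you spell out the Bedford--Taylor continuity (including the bookkeeping to absorb the moving reference form $\eta_t-\eta_T$ into local potentials), invoke the Ko{\l}odziej/EGZ continuity of bounded solutions to obtain $u_T\in C^0(M)$, and then apply Dini. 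Both routes are sound; yours trades a shorter monotonicity argument for a longer but more transparent endgame relying on pluripotential theory, whereas the paper's version is more self-contained at the PDE level but leaves the uniform-convergence and current-sense steps to the reader.
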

\begin{proof}
By (\ref{M-A: continuity 4}),
$$t\triangle\big(\frac{u}{t}\big)^{'}=\big(\frac{u}{t}\big)^{'}+\frac{1}{t}\big(\tr_\omega\omega_0-n\big)
\ge\big(\frac{u}{t}\big)^{'}-\frac{n}{t}.$$
Thus,
$$t\triangle\big(\frac{u}{t}-n\log t\big)^{'}\ge\big(\frac{u}{t}-n\log t\big)^{'}.$$
So, $\big(\frac{u}{t}-n\log t\big)$ is monotone decreasing. Consequently, $u_t$ converges uniformly to a unique limit $u_T$. It is obvious that $u_T$ is smooth outside $\mathcal{S}_M$ and (\ref{M-A: continuity 6}) holds in the current sense.
\end{proof}

\begin{rema}
Similarly, the formula {\rm(\ref{M-A: continuity 5})} imply that $\big(u^{'}-n\log t\big)^{'}\le 0.$
In particular, $\big(u^{'}-n\log t\big)$ decrease to unique functions on $M$ which is locally bounded on $M\backslash\mathcal{S}_M$.
\end{rema}

\begin{rema}
The function $u_T$ is the unique solution to the Monge-Amp\`ere equation {\rm(\ref{M-A: continuity 6})} in the big class $[\omega_0]+TK_M$, cf. {\rm\cite{BEGZ}}.
\end{rema}

\begin{prop}\label{metric comparison: prop}
There exists $C$ independent of $t$ such that
\begin{equation}\label{metric comparison: T}
\eta_T\le C\omega_t,\,\forall t\in[\frac{T}{2},T).
\end{equation}
\end{prop}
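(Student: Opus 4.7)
The plan is to prove a uniform bound $\tr_{\omega_t}\eta_T \le C$ via a Chern--Lu (generalized Schwarz) inequality applied to the fixed holomorphic map $\Phi:M\to\mathbb{C}P^N$, combined with an auxiliary function built from $u_t$ that absorbs the positive zeroth-order term coming from the curvature of $\omega_{FS}$. Since $\eta_T$ is semi-positive, the trace bound is equivalent to (\ref{metric comparison: T}).

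The starting point is that the continuity equation itself supplies the required Ricci lower bound: $\Ric(\omega_t)=\frac{1}{t}(\omega_0-\omega_t)\ge -\frac{2}{T}\omega_t$ for $t\in[T/2,T)$, since $\omega_0\ge 0$. The Fubini--Study metric has bounded holomorphic bisectional curvature, so on the open subset where $d\Phi$ has nonzero rank, the Chern--Lu inequality gives
$$\Delta_{\omega_t}\log\tr_{\omega_t}\eta_T\ \ge\ -C_1-C_2\tr_{\omega_t}\eta_T,$$
with constants $C_1,C_2$ depending on $T,\ell_0$ and the curvature of $\omega_{FS}$ but not on $t$. I would then form $H:=\log\tr_{\omega_t}\eta_T-Au_t$. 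Using $\sqrt{-1}\partial\bar\partial u_t=\omega_t-\eta_t$ and the inequality $\eta_t=\frac{T-t}{T}\omega_0+\frac{t}{T}\eta_T\ge\frac{1}{2}\eta_T$, we have $\Delta_{\omega_t}u_t=n-\tr_{\omega_t}\eta_t\le n-\frac{1}{2}\tr_{\omega_t}\eta_T$, so
$$\Delta_{\omega_t}H\ \ge\ -C_1-An+\Bigl(\frac{A}{2}-C_2\Bigr)\tr_{\omega_t}\eta_T.$$
Choosing $A=2(C_2+1)$ turns the right-hand side into $-C_3+\tr_{\omega_t}\eta_T$. At a maximum point of $H$ the maximum principle forces $\tr_{\omega_t}\eta_T\le C_3$, and then the uniform $C^0$ bound (\ref{potential bound: C^0}) on $u_t$ bounds $\sup_M H$, hence $\tr_{\omega_t}\eta_T$, by a constant independent of $t$.

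The main technical wrinkle is that $\eta_T$ can degenerate on the ramification locus $\{d\Phi=0\}$, so $\log\tr_{\omega_t}\eta_T$ is only defined on a proper open subset of $M$. This is mild: $H$ extends by $-\infty$ to all of $M$ and $M$ is compact, so the supremum of $H$ is attained at a point where $\Phi$ is unramified and the Chern--Lu computation applies as written. Alternatively, I would run the argument with $\eta_T$ replaced by the genuinely K\"ahler form $\eta_T+\epsilon\omega_0$ (whose bisectional curvature is controlled uniformly in $\epsilon$ staying in any compact range) and then let $\epsilon\to 0$. In either case, the conclusion $\tr_{\omega_t}\eta_T\le C$ passes to the full manifold, which is what is required.
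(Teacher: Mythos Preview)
Your proof is correct and follows essentially the same approach as the paper: apply the Schwarz/Chern--Lu inequality to $\Phi:(M,\omega_t)\to(\mathbb{C}P^N,\omega_{FS})$, combine with $-Au_t$ using $\eta_t\ge\frac{t}{T}\eta_T\ge\frac{1}{2}\eta_T$ to absorb the bad term, and apply the maximum principle, noting that the maximum of $H$ is attained where $\eta_T\ne0$. The only caveat is that your alternative route via $\eta_T+\epsilon\omega_0$ is not quite as clean as you suggest (the bisectional curvature of this perturbed metric need not stay bounded as $\epsilon\to0$ near the critical set of $\Phi$), but your primary argument is fine and matches the paper.
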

\begin{proof}
By Yau's Schwarz lemma \cite{Ya78-2}, using $\Ric(\omega_t)\ge-\frac{1}{t}\omega_t$,
$$\triangle_\omega\log\tr_\omega\eta_T\ge-\frac{n}{t}-n\tr_\omega\eta_T.$$
On the other hand, $\eta_t\ge\epsilon\eta_T$ for some $\epsilon>0$ independent of $t$, so
$$\triangle u=n-\tr_\omega\eta_t\le n-\epsilon\tr_\omega\eta_T.$$
Hence,
$$\triangle_\omega(\log\tr_\omega\eta_T-\frac{2n}{\epsilon}u)\ge-\frac{C(n,T)}{\epsilon}+n\tr_\omega\eta_T,\,\forall t\in[\frac{T}{2},T).$$
Notice that the maximal of $\big(\log\tr_\omega\eta_T-\frac{2n}{\epsilon}u\big)$ is achieved at some point $x_0$ where $\eta_T\neq 0$. At this maximum point, $\tr_\omega\eta_T(x_0)\le C$. The maximum principle gives the desired estimate $\tr_\omega\eta_T\le C$ for any $\frac{T}{2}\le t<T$.
\end{proof}

\begin{coro}\label{smooth on regular locus}
The limit metric $\omega_T$ is smooth on the regular set $M_{\reg}$.
\end{coro}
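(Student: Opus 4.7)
The plan is to upgrade the uniform comparison $\eta_T\le C\omega_t$ of Proposition \ref{metric comparison: prop} into uniform $C^k$-estimates for $u_t$ on compact subsets of $M_{\reg}$ and then pass to the limit as $t\to T$.

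Fix a compact $K\Subset M_{\reg}$. Since $\eta_T$ is a smooth K\"ahler form on a neighborhood of $K$, the bound (\ref{metric comparison: T}) gives a uniform lower bound $\omega_t\ge c(K)\eta_T$. Combined with the Monge--Amp\`ere equation (\ref{M-A: continuity 2}) and the $C^0$ bound (\ref{potential bound: C^0}) on $u_t$, one obtains a uniform upper bound $\omega_t^n\le C(K)\eta_T^n$ on $K$. The lower bound on individual eigenvalues of $\omega_t$ with respect to $\eta_T$ together with the upper bound on their product pinches all such eigenvalues into a fixed compact subinterval of $(0,\infty)$, so $\omega_t$ and $\eta_T$ are uniformly equivalent on $K$. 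This amounts to a uniform two-sided bound on the complex Hessian of $u_t$ on $K$.

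With this equivalence in hand, (\ref{M-A: continuity 2}) becomes a uniformly elliptic complex Monge--Amp\`ere equation on $K$ whose right-hand side $e^{u_t/t}\Omega/\eta_T^n$ is bounded in $C^0(K)$ above and below away from zero uniformly in $t$. The Evans--Krylov interior estimate then yields a uniform $C^{2,\alpha}(K')$ bound on any $K'\Subset K$. The right-hand side thereby becomes $C^{2,\alpha}$-bounded, and the standard Schauder bootstrap applied to the linearized operator produces $\|u_t\|_{C^{k,\alpha}(K'')}\le C_{k,K''}$ for every $k\ge 0$ and every $K''\Subset M_{\reg}$.

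Combining the uniform convergence $u_t\to u_T$ established in the earlier lemma with these locally uniform $C^k$-bounds and Arzel\`a--Ascoli, the convergence is in fact smooth on compact subsets of $M_{\reg}$. In particular $u_T\in C^\infty(M_{\reg})$, and passing to the limit in $\omega_t\ge c(K)\eta_T$ shows that $\omega_T=\eta_T+\sqrt{-1}\partial\bar\partial u_T$ is a genuine smooth K\"ahler metric on $M_{\reg}$. The only nontrivial input is the two-sided metric equivalence furnished by Proposition \ref{metric comparison: prop}; everything after it is the standard complex Monge--Amp\`ere regularity bootstrap, so no genuine obstacle remains.
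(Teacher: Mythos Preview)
Your proof is correct and follows essentially the same line as the paper's: on a compact $K\Subset M_{\reg}$, the lower bound $\omega_t\ge C^{-1}\eta_T$ from Proposition~\ref{metric comparison: prop} together with the volume bound from the Monge--Amp\`ere equation and the $C^0$ estimate on $u_t$ pins the eigenvalues of $\omega_t$ relative to $\eta_T$, and the rest is the standard Evans--Krylov/Schauder bootstrap. The only difference is that the paper applies the eigenvalue argument and bootstrap directly to the limit $u_T$ via (\ref{M-A: continuity 6}), whereas you carry the estimates uniformly in $t$ and then pass to the limit; your route is slightly more explicit and has the bonus of yielding smooth convergence $\omega_t\to\omega_T$ on $M_{\reg}$, which the paper establishes separately in Proposition~\ref{Cheeger-Gromov convergence: prop 2}(1) by a different (Cheeger--Gromov based) argument.
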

\begin{proof}
On any compact subset $K\subset M_{\reg}$ the measure $\eta_T^n$ is uniformly equivalent to $\Omega$. So, by (\ref{M-A: continuity 6}) and (\ref{metric comparison: T})
$$C^{-1}\eta_T\le \omega_T\le C_K\eta_T,\mbox{ on }K,$$
for some constant $C_K$. In particular $n+\triangle_{\eta_T}u_T\le C_K$ on $K$. Then applying a bootstrap argument we get the higher derivative bound $\|u_T\|_{C^k(K)}\le C_{k,K}$ on $K$.
\end{proof}

\begin{rema}
In Section 3 we will show that $\omega_t$ converges smoothly to $\omega_T$ on $M_{\reg}$ and that  $M\backslash\mathcal{S}_M\subset  M_{\reg}$.
\end{rema}

Inspired by K\"ahler-Ricci flow we define $w=(T-t)u^{'}+u$ which satisfies
\begin{equation}\label{M-A: continuity 7}
\triangle w=\frac{1}{t}w+n-\tr_\omega\eta_T-\frac{T}{t^2}u.
\end{equation}
This can be seen by combining
\begin{equation}
\triangle u=n-\frac{T-t}{T}\tr_\omega\omega_0-\frac{t}{T}\tr_\omega\eta_T
\end{equation}
and
\begin{equation}\label{M-A: continuity 8}
\triangle u^{'}=\frac{1}{t^2}(tu^{'}-u)+\frac{1}{T}\tr_\omega(\omega_0-\eta_T).
\end{equation}
A direct corollary of (\ref{M-A: continuity 7}) is
\begin{equation}
\|w\|_{C^0}+\|\triangle w\|_{C^0}\le C,\,\forall t\in[\frac{T}{2},T).
\end{equation}
Combining with the $C^0$ bound of $u$ we also have
\begin{equation}
-\frac{C}{T-t}\le u^{'}\le C,\,\forall t\in[\frac{T}{2},T).
\end{equation}

\begin{prop}
There exists $C$ independent of $t$ such that
\begin{equation}\label{potential bound: C^1-1}
\|\nabla w\|_{C^0}\le C,\,\forall t\in[\frac{T}{2},T).
\end{equation}
In particular, since $u^{'}$ converges to a locally bounded function on $M\backslash\mathcal{S}_M$,
\begin{equation}\label{potential bound: C^1-2}
\|\nabla u_T\|_{C^0}\le C,\mbox{ on }M\backslash\mathcal{S}_M.
\end{equation}
\end{prop}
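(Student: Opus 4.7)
\smallskip

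\textbf{Proof proposal.} The plan is to derive a differential inequality for $|\nabla w|^2_{\omega_t}$ via the complex Bochner formula and apply the maximum principle to an auxiliary function of the form $G = |\nabla w|^2 + A w^2$ with $A$ a large constant. Writing $\omega = \omega_t$, the Bochner formula together with the Ricci lower bound $\Ric(\omega) \ge -\frac{1}{t}\omega \ge -\frac{2}{T}\omega$ yields
\begin{equation*}
\triangle|\nabla w|^2 \;\ge\; \frac{(\triangle w)^2}{n} \;+\; 2\Re\langle \nabla w, \nabla\triangle w\rangle \;-\; \frac{2}{T}|\nabla w|^2.
\end{equation*}
Differentiating the PDE \eqref{M-A: continuity 7} gives
\begin{equation*}
\nabla \triangle w \;=\; \frac{1}{t}\nabla w \;-\; \nabla \tr_\omega \eta_T \;-\; \frac{T}{t^2}\nabla u,
\end{equation*}
and using $u = w - (T-t)u'$ we may replace $\nabla u$ by $\nabla w - (T-t)\nabla u'$, producing a coefficient $\frac{T(T-t)}{t^2}$ in front of $\nabla u'$ that vanishes as $t \to T$.

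Combining these identities gives, after adding the contribution $2A|\nabla w|^2 + 2Aw\triangle w$ from $A\triangle(w^2)$ and discarding the nonnegative term $\frac{(\triangle w)^2}{n}$, an inequality of the shape
\begin{equation*}
\triangle G \;\ge\; c_0 A |\nabla w|^2 \;-\; C \;-\; C\,|\langle \nabla w,\nabla \tr_\omega \eta_T\rangle| \;-\; C(T-t)\,|\langle \nabla w, \nabla u'\rangle|,
\end{equation*}
valid at any point for $A$ sufficiently large, using the known $C^0$ bounds on $w$, $u$, $\tr_\omega\eta_T$ (from Proposition \ref{metric comparison: prop}) and the fact that $\triangle w$ is bounded. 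At a maximum $x_0$ of $G$ we have $\triangle G(x_0)\le 0$. The two inner products involving $\nabla \tr_\omega\eta_T$ and $\nabla u'$ are absorbed by Cauchy--Schwarz: we bound $|\nabla \tr_\omega\eta_T|^2$ and $|\nabla u'|^2$ via the full complex Hessians $|\partial\bar\partial w|^2$ and $|\partial\bar\partial u'|^2$, invoking the relations produced by differentiating \eqref{M-A: continuity 7} and \eqref{M-A: continuity 8}; with $A$ large the leading $c_0 A |\nabla w|^2$ dominates, forcing $|\nabla w|^2(x_0) \le C$. Since $G(x_0)$ dominates $G$ everywhere and $w$ is $C^0$ bounded, this gives \eqref{potential bound: C^1-1}.

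For the consequence \eqref{potential bound: C^1-2}, we observe that on any compact set $K \subset M\setminus\mathcal{S}_M$ the function $u'_t$ is locally bounded uniformly in $t$, so $(T-t)u'_t \to 0$ there and hence $w_t = (T-t)u'_t + u_t$ converges to $u_T$. By the smooth convergence established on $M_{\reg}$ (Corollary \ref{smooth on regular locus}) the gradient bound \eqref{potential bound: C^1-1} passes to the limit, yielding $\|\nabla u_T\|_{C^0}\le C$ on $M\setminus\mathcal{S}_M$.

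The main obstacle is clearly the absorption of $\nabla \tr_\omega\eta_T$ and $\nabla u'$, since neither quantity has an a priori $C^0$ bound (indeed, $\|\nabla u_T\|_{C^0}$ is part of what we are trying to prove, so any argument must avoid circularity). The vanishing factor $(T-t)$ in front of $\nabla u'$ is crucial, and the term in $\nabla \tr_\omega \eta_T$ must be controlled by second-order information on $w$ supplied by the Bochner term $|\partial\bar\partial w|^2$. A cleaner alternative, which may be what the authors carry out, is to apply a Cheng--Yau type local gradient estimate directly: since on $(M,\omega_t)$ the Ricci curvature is uniformly bounded below and $w$ has uniformly bounded $C^0$ norm and uniformly bounded Laplacian, the Cheng--Yau inequality on a geodesic ball of fixed radius produces the desired uniform bound on $|\nabla w|$.
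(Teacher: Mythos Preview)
Your overall strategy --- Bochner formula plus maximum principle on an auxiliary function --- matches the paper, but the choice of auxiliary function and the absorption mechanism you describe do not work, and this is a genuine gap.

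The auxiliary function $G = |\nabla w|^2 + Aw^2$ is insufficient. Computing $\triangle(Aw^2) = 2A|\nabla w|^2 + 2Aw\triangle w$ only produces more of the $|\nabla w|^2$ term (harmless, already present) and a bounded term; it generates nothing that can absorb $|\nabla\tr_\omega\eta_T|^2$ or the $\nabla u$ contribution. Your proposed remedy --- controlling $|\nabla\tr_\omega\eta_T|^2$ by the Bochner Hessian term $|\partial\bar\partial w|^2$ --- does not work: the Bochner term is $\sum |w_{i\bar j}|^2$, a \emph{second}-order quantity, while $\nabla\tr_\omega\eta_T$ is related to \emph{third} derivatives of $w$ through the PDE. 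There is no algebraic inequality connecting them. Likewise, rewriting $\nabla u = \nabla w - (T-t)\nabla u'$ trades one uncontrolled quantity for another: we have no growth bound on $|\nabla u'|$ as $t\to T$, so the coefficient $(T-t)$ buys nothing.

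The paper's actual argument is different in exactly this place. It keeps $\nabla u$ (not $\nabla u'$) and builds the auxiliary function
\[
|\nabla w|^2 + 4tC_2\,\tr_\omega\eta_T + \frac{8T^2}{t^3}u^2 - \frac{8T^2}{t^3}C_4\,u.
\]
The point of the $\tr_\omega\eta_T$ term is that Yau's Schwarz lemma gives $\triangle\tr_\omega\eta_T \ge -C_1 + C_2^{-1}|\nabla\tr_\omega\eta_T|^2$, so its Laplacian \emph{produces} a positive $|\nabla\tr_\omega\eta_T|^2$ which absorbs the cross term after Cauchy--Schwarz. Similarly $\triangle u^2 \ge 2|\nabla u|^2 - C\,\tfrac{T-t}{T}\tr_\omega\omega_0 - C$ yields the needed $|\nabla u|^2$, and the linear $-C_4 u$ term soaks up the residual $\tr_\omega\omega_0$. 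These two ideas --- Schwarz lemma for $\eta_T$, and adding $u^2$ rather than $w^2$ --- are the missing ingredients in your argument.

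Your fallback Cheng--Yau suggestion also falls short as stated: the Cheng--Yau estimate for $\triangle w = f$ requires control of $\nabla f$, and $\nabla\triangle w$ involves precisely the unbounded quantities $\nabla\tr_\omega\eta_T$ and $\nabla u$ that caused the problem in the first place.
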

\begin{proof}
The authors learned the second estimate from J. Song. It is inspired by the calculation in K\"ahler-Ricci flow; see \cite{Zh10} and \cite{So14}. Recall that the continuity equation (\ref{M-A: continuity}) gives
$Ric(\omega)=\frac{1}{t}(\omega_0-\omega)$, so by the Bochner formula,
\begin{eqnarray}
\triangle|\nabla w|^2=|\nabla\nabla w|^2+|\nabla\bar{\nabla}w|^2+\nabla_i\triangle w\nabla_{\bar{i}}w+\nabla_{\bar{i}}\triangle w\nabla_iw+\frac{1}{t}\langle\omega_0-\omega,\partial w\otimes\bar{\partial}w\rangle\nonumber
\end{eqnarray}
where, by (\ref{M-A: continuity 7}),
$$\nabla_i\triangle w\nabla_{\bar{i}}w+\nabla_{\bar{i}}\triangle w\nabla_iw=\frac{2}{t}|\nabla w|^2-2\re\bigg(\nabla_i\tr_\omega\eta_T\nabla_{\bar{i}}w\bigg)-\frac{2T}{t^2}\re\bigg(\nabla_iu\nabla_{\bar{i}}w\bigg).$$
So,
\begin{eqnarray}
\triangle|\nabla w|^2&\ge&|\nabla\nabla w|^2+|\nabla\bar{\nabla}w|^2+\frac{1}{t}|\nabla w|^2-2\re\bigg(\nabla_i\tr_\omega\eta_T\nabla_{\bar{i}}w\bigg)-\frac{2T}{t^2}\re\bigg(\nabla_iu\nabla_{\bar{i}}w\bigg)\nonumber\\
&\ge&\frac{1}{2t}|\nabla w|^2-4t|\nabla\tr_\omega\eta_T|^2-\frac{16T^2}{t^3}|\nabla u|^2.\nonumber
\end{eqnarray}
To estimate the term $|\nabla\tr\eta_T|^2$ recall that by the Schwarz lemma \cite{Ya78-2}
\begin{equation}\nonumber
\triangle\tr_\omega\eta_T\ge\tr_\omega\eta_T\bigg(-\frac{n}{t}-A\tr_\omega\eta_T\bigg)
+\frac{1}{\tr_\omega\eta_T}|\nabla\tr_\omega\eta_T|^2\ge-C_1+C_2^{-1}|\nabla\tr_\omega\eta_T|^2
\end{equation}
where we used the $C^0$ bound of $\tr_\omega\eta_T$; to estimate the term $|\nabla_iu|^2$ we use
$$\triangle (-u)=-n+\frac{T-t}{T}\tr_\omega\omega_0+\frac{t}{T}\tr_\omega\eta_T\ge\frac{T-t}{T}\tr_\omega\omega_0-C_3$$
and
$$\triangle u^2=2u\triangle u+2|\nabla u|^2\ge2|\nabla u|^2-C_4\frac{T-t}{T}\tr_\omega\omega_0-C_5.$$
The constants $C_i$ here do not depend on $t\in[\frac{T}{2},T)$. A combination gives
$$\triangle\bigg(|\nabla w|^2+4tC_2\tr_\omega\eta_T+\frac{8T^2}{t^3}u^2-\frac{8T^2}{t^3}C_4u\bigg)
\ge\frac{1}{2t}|\nabla w|^2-C_6.$$
By maximum principle we get a uniform upper bound of $|\nabla w|$ when $t\in[\frac{T}{2},T)$.
\end{proof}

\begin{rema}
In \S 3.2, we will show that $M\backslash\mathcal{S}_M$ is dense in the Gromov-Hausdorff limit, so $u_T$ is globally Lipschitz.
\end{rema}



\section{Algebraic structure of the limit space}




\subsection{Preliminaries}

We start with the Bochner and Weitzenb\"och formulas on a general line bundle. Let $(M,\omega)$ be a K\"ahler manifold of dimension $n$ and $(L,h)$ be a Hermitian line bundle over $M$. Let $\Theta_h$ be the Chern curvature form of $h$. Let $\nabla$ and $\bar{\nabla}$ denote the $(1,0)$ and $(0,1)$ part of a connection respectively. The connection appeared in this paper is usually known as the Chern connection or Levi-Civita connection.

For a holomorphic section $\varsigma\in H^0(M,L)$ we write for simplicity
$$|\varsigma|=|\varsigma|_{h},\,|\nabla\varsigma|=|\nabla\varsigma|_{h\otimes\omega},$$
and
$$|\nabla\nabla\varsigma|^2=\sum_{i,j}|\nabla_i\nabla_j\varsigma|^2,
\,|\bar{\nabla}\nabla\varsigma|^2=\sum_{i,j}|\nabla_{\bar{i}}\nabla_j\varsigma|^2.$$
By direct computation we have

\begin{lemm}[Bochner formulas]
For any $\varsigma\in H^0(M,L)$ we have
\begin{equation}\label{Bochner formula: 01}
\triangle_{\omega}|\varsigma|^2=|\nabla\varsigma|^2-|\varsigma|^2\cdot\tr_{\omega}\Theta,
\end{equation}
and
\begin{eqnarray}\label{Bochner formula: 02}
\triangle_{\omega}|\nabla\varsigma|^2&=&|\bar{\nabla}\nabla\varsigma|^2+|\nabla\nabla\varsigma|^2
-\nabla_j\Theta_{i\bar{j}}\langle\varsigma,\nabla_{\bar{i}}\bar{\varsigma}\rangle
-\nabla_{\bar{j}}(\tr_{\omega}\Theta)\langle\nabla_j\varsigma,\bar{\varsigma}\rangle\nonumber\\
&&\hspace{1cm}+R_{i\bar{j}}\langle\nabla_j\varsigma,\nabla_{\bar{i}}\bar{\varsigma}\rangle
-2\Theta_{i\bar{j}}\langle\nabla_j\varsigma,\nabla_{\bar{i}}\bar{\varsigma}\rangle
-|\nabla\varsigma|^2\cdot\tr_{\omega}\Theta,
\end{eqnarray}
where $R_{i\bar{j}}$ is the Ricci curvature of $\omega$, $\langle,\rangle$ is the inner product defined by $h$.
\end{lemm}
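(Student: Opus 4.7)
Both identities are standard Weitzenb\"ock--Bochner formulas for a holomorphic section of a Hermitian line bundle on a K\"ahler manifold. The strategy is to compute in normal complex coordinates at a fixed point, so that the connection forms of both $(M,\omega)$ and $(L,h)$ vanish there; this reduces everything to commuting covariant derivatives and collecting curvature terms. Because the resulting identities are tensorial, they then extend to every point of $M$.

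For the first formula, since $\varsigma$ is holomorphic we have $\nabla_{\bar{j}}\varsigma=0$, and compatibility of the Chern connection with $h$ gives $\partial_i|\varsigma|^2=\langle\nabla_i\varsigma,\bar{\varsigma}\rangle$. Differentiating again in $\partial_{\bar{j}}$ and using the commutation relation
\[
\nabla_{\bar{j}}\nabla_i\varsigma=[\nabla_{\bar{j}},\nabla_i]\varsigma=-\Theta_{i\bar{j}}\varsigma,
\]
followed by tracing with $g^{i\bar{j}}$, produces (\ref{Bochner formula: 01}).

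For the second formula I would apply the same procedure to $|\nabla\varsigma|^2$, viewing $\alpha:=\nabla\varsigma$ as a section of $L\otimes T^{*1,0}M$ and computing $g^{k\bar{l}}\partial_{\bar{l}}\partial_k|\alpha|^2$. Distributing the two derivatives, one immediately sees the pure Hessian terms $|\nabla\nabla\varsigma|^2+|\bar{\nabla}\nabla\varsigma|^2$ appearing as the ``squared covariant derivative'' pairings, and then three kinds of curvature contributions. Commuting $\nabla_{\bar{l}}$ past $\nabla_k$ on a section of $L\otimes T^{*1,0}M$ produces the Ricci term $R_{i\bar{j}}\langle\nabla_j\varsigma,\nabla_{\bar{i}}\bar{\varsigma}\rangle$ from the cotangent factor together with the line-bundle contribution $-2\Theta_{i\bar{j}}\langle\nabla_j\varsigma,\nabla_{\bar{i}}\bar{\varsigma}\rangle$ from the $L$-factor. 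Further, applying $\nabla_j$ or $\nabla_{\bar{j}}$ to the identity $\nabla_{\bar{k}}\nabla_i\varsigma=-\Theta_{i\bar{k}}\varsigma$ already used in the first step introduces derivative-of-curvature terms; after applying the K\"ahler--Bianchi identity $\nabla_k\Theta_{i\bar{j}}=\nabla_i\Theta_{k\bar{j}}$ to consolidate them, these take the displayed form $-\nabla_j\Theta_{i\bar{j}}\langle\varsigma,\nabla_{\bar{i}}\bar{\varsigma}\rangle$ and $-\nabla_{\bar{j}}(\tr_{\omega}\Theta)\langle\nabla_j\varsigma,\bar{\varsigma}\rangle$. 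Finally, a trace of $\Theta$ against the $L$-factor of $\alpha$ at the end contributes $-|\nabla\varsigma|^2\tr_{\omega}\Theta$, exactly parallel to the trace term in the first Bochner identity.

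The main obstacle is organizational rather than conceptual: one has to track sign conventions for $\Theta$ and for each commutator consistently, decide in which order to commute derivatives (since both curvatures enter), and recognize which of the a priori eight or nine terms involving $\nabla\Theta$ should be merged via the Bianchi identity rather than left separate. Once that bookkeeping is carried out cleanly in normal coordinates, the calculation is essentially mechanical and yields (\ref{Bochner formula: 02}) verbatim.
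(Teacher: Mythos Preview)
Your proposal is correct and is exactly the ``direct computation'' the paper alludes to; the paper gives no proof beyond that phrase, so your normal-coordinate commutator bookkeeping is precisely what is intended. There is nothing to compare---you have simply written out what the authors left implicit.
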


\begin{lemm}[Weitzenb\"och formulas]
For any smooth section $\xi\in\Gamma(T^{0,1}M\otimes L)$ we have
\begin{equation}\label{Weitzenboch formula: 01}
(\bar{\partial}^*\bar{\partial}+\bar{\partial}\bar{\partial}^*)\xi=\bar{\nabla}^*\bar{\nabla}\xi+\big(\Theta
+\Ric(\omega)\big)(\xi,\cdot),
\end{equation}
and
\begin{equation}\label{Weitzenboch formula: 02}
(\bar{\partial}^*\bar{\partial}+\bar{\partial}\bar{\partial}^*)\xi=\nabla^*\nabla\xi+\Theta(\xi,\cdot)
-\big(\tr_{\omega}\Theta\big)\xi
\end{equation}
where $\Ric(\xi,\cdot)$ (similar to $\Theta(\xi,\cdot)$) is defined by, for $\xi=\alpha_{\bar{i}}d\bar{z}^i\otimes\varsigma$ in local normal coordinate,
$$\Ric(\xi,\cdot)=R_{i\bar{j}}\alpha_{\bar{i}}d\bar{z}^j\otimes\varsigma.$$
\end{lemm}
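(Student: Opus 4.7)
Both identities are instances of the standard Bochner--Kodaira--Nakano formula for the twisted bundle $T^{0,1}M\otimes L$, so the plan is to verify each at an arbitrary point by a direct calculation in K\"ahler normal coordinates and then invoke tensoriality. Fix $p\in M$, pick holomorphic coordinates with $g_{i\bar j}(p)=\delta_{ij}$ and $dg_{i\bar j}(p)=0$, and choose a local holomorphic frame $e_L$ of $L$ such that $|e_L|_h(p)=1$ and the Chern connection one-form of $h$ vanishes at $p$; write $\xi=\alpha_{\bar i}\,d\bar z^i\otimes e_L$. The only algebraic input required is the Ricci commutation on $T^{0,1}M\otimes L$, which at $p$ reads
\begin{equation}\nonumber
[\nabla_j,\bar\nabla_{\bar i}]\xi=R_{j\bar i\bar k}{}^{\bar\ell}\,\alpha_{\bar\ell}\,d\bar z^k\otimes e_L\;-\;\Theta_{j\bar i}\,\xi,
\end{equation}
with the two curvature pieces coming from the Chern connection on $T^{0,1}M$ and on $L$, respectively.

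For formula~(\ref{Weitzenboch formula: 01}) the plan is to expand $\bar\partial\xi=\bar\nabla_{\bar j}\alpha_{\bar i}\,d\bar z^j\wedge d\bar z^i\otimes e_L$ and $\bar\partial^*\xi=-g^{k\bar i}\nabla_k\alpha_{\bar i}\otimes e_L$ at $p$, apply the corresponding adjoints once more, and collect the result into
\begin{equation}\nonumber
(\bar\partial^*\bar\partial+\bar\partial\bar\partial^*)\xi=\bar\nabla^*\bar\nabla\xi+g^{i\bar j}[\nabla_i,\bar\nabla_{\bar j}]\alpha_{\bar k}\,d\bar z^k\otimes e_L,
\end{equation}
an identity valid pointwise at $p$ in which the antisymmetrization implicit in $\bar\partial$ produces precisely one commutator after integration by parts. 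Inserting the Ricci commutation above and tracing with the metric reproduces $\Ric(\omega)(\xi,\cdot)+\Theta(\xi,\cdot)$ on the right-hand side, which yields~(\ref{Weitzenboch formula: 01}) at $p$, hence everywhere.

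For formula~(\ref{Weitzenboch formula: 02}) the plan is to compare $\bar\nabla^*\bar\nabla$ and $\nabla^*\nabla$ directly. From the formal adjoint relations one has
\begin{equation}\nonumber
\bar\nabla^*\bar\nabla\xi-\nabla^*\nabla\xi=-g^{i\bar j}[\nabla_i,\bar\nabla_{\bar j}]\xi,
\end{equation}
and the same Ricci commutation identity evaluates the right-hand side to $\Ric(\omega)(\xi,\cdot)-(\tr_\omega\Theta)\xi$. Substituting this into~(\ref{Weitzenboch formula: 01}) cancels the $\Ric(\omega)$ term and leaves exactly $\Theta(\xi,\cdot)-(\tr_\omega\Theta)\xi$, as required.

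The main obstacle, as usual in Bochner calculus, is bookkeeping of sign and index conventions: depending on how $T^{0,1}M$ is identified (with $\overline{T^{1,0}M}$ versus $(T^{1,0}M)^*$) and on the normalization of the formal adjoints $\bar\nabla^*,\nabla^*$, several signs in the commutator can flip, and one must check that after all contractions the curvature terms reproduce the specific tensors $\Ric(\omega)(\xi,\cdot)$, $\Theta(\xi,\cdot)$, and $(\tr_\omega\Theta)\xi$ in the precise form stated. Once the convention fixed by the preceding Bochner lemma is used consistently, the remaining verification is routine.
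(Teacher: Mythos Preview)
The paper does not give a proof of this lemma: it is stated as one of two standard identities in \S3.1, prefaced only by the phrase ``By direct computation we have'' (which applies to both the Bochner and Weitzenb\"ock lemmas). Your plan --- reduce to a point in K\"ahler normal coordinates, write out the Ricci/curvature commutator on $T^{0,1}M\otimes L$, expand $\bar\partial,\bar\partial^*$ to obtain \eqref{Weitzenboch formula: 01}, and then derive \eqref{Weitzenboch formula: 02} by comparing $\bar\nabla^*\bar\nabla$ with $\nabla^*\nabla$ via the same commutator --- is exactly the routine computation the paper is gesturing at, and is correct in outline. Your own caveat about sign and index bookkeeping is the only place where care is needed; once you fix conventions compatible with the paper's definition of $\Ric(\xi,\cdot)$ and $\Theta(\xi,\cdot)$, the rest is mechanical.
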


We also need a special version of the effective finite generation property of a line bundle. Suppose the line bundle $L$ satisfies in addition that (i) $L$ is base point free and (ii) $L-K_M$ is ample, then by Skoda division theorem, we have the following: let $s_0,\cdots,s_N$ be an orthonormal basis of $H^0(M;L)$ with respect to the $L^2$ metric, then for any $s\in H^0(M;L^k)$, $k>n+1$, we have the decomposition (cf. \cite[Proposition 7]{CLi})
\begin{equation}
s=\sum_{\alpha\in\mathbb{N}^N,|\alpha|=k-n-1} e_\alpha s_0^{\alpha_0}\cdots s_N^{\alpha_N}
\end{equation}
where $e_\alpha\in H^0(M;L^{n+1})$ satisfying
\begin{equation}
\int_M|e_\alpha|^2_{h^{n+1}}\omega^n\le C(k,h,\omega)\int_M|s|_{h^k}^2\omega^n,
\end{equation}
the constant $C(k,h,\omega)$ depends on the metric $h$, $\omega$, the power $k$ and the upper and lower bound of the Bergman kernel $\rho_0(x)=\sum_i|s_i(x)|_h^2$.



\subsection{Cheeger-Gromov convergence: global convergence}

From now on, let $\omega_t, t\in[0,T),$ be the maximal solution to the continuity equation (\ref{M-A: continuity}) on a K\"ahler manifold $M$. The metric $\omega_t$ converges smoothly on the ample locus $M_{\amp}=M\backslash\mathcal{S}_M$. In addition, since $\Ric(\omega_t)\ge-\frac{1}{t}\omega_t$, by Gromov precompactness theorem, for any sequence $t_i\rightarrow T$ and fixed $x_0\in M_{\amp}$, we may assume that
\begin{equation}
(M,\omega_{t_i},x_0)\stackrel{d_{GH}}{\longrightarrow}(M_T,d_T,x_T)
\end{equation}
after passing to a subsequence if necessary. The limit $(M_T,d_T)$ is a complete length metric space, maybe noncompact in a prior. It has a regular/singular decomposition $M_T=\mathcal{R}\cup\mathcal{S}$, a point $x\in \mathcal{R}$ iff the tangent cone at $x$ is the Euclidean space $\mathbb{R}^{2n}$. The proof of the following lemma is exactly same as \cite[Proposition 8]{Ga13} so we omit it.

\begin{lemm}\label{local regularity}
There is a constant $\delta>0$ such that for any $\frac{T}{2}\le t<T$, if a metric ball $B_{\omega_t}(x,r)$ satisfies
\begin{equation}
\vol_{\omega_t}\big(B_{\omega_t}(x,r)\big)\ge(1-\delta)\vol(B_r^0)
\end{equation}
where $\vol(B_r^0)$ is the volume of a metric ball of radius $r$ in $2n$-Euclidean space, then
\begin{equation}
\Ric(\omega_t)\le(2n-1)r^{-2}\omega_t,\,\mbox{ in }B_{\omega_t}(x,\delta r).
\end{equation}
\end{lemm}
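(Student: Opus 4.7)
The plan is a blow-up/contradiction argument of Cheeger--Colding--Tian $\epsilon$-regularity type, deferring to \cite{Ga13} for technical details while outlining the structure.

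Suppose the conclusion fails. Then there exist sequences $t_i\in[\frac{T}{2},T)$, $x_i\in M$, $r_i>0$, and $y_i\in B_{\omega_{t_i}}(x_i,r_i/i)$ with
\[
\vol_{\omega_{t_i}}\!\bigl(B_{\omega_{t_i}}(x_i,r_i)\bigr)\ge(1-1/i)\vol(B_{r_i}^0),
\]
yet $\Ric(\omega_{t_i})(y_i)\not\le(2n-1)r_i^{-2}\omega_{t_i}(y_i)$. Normalize to unit scale by setting $\tilde\omega_i=r_i^{-2}\omega_{t_i}$: then $B_{\tilde\omega_i}(x_i,1)$ has volume $\ge(1-1/i)\vol(B_1^0)$, the rescaled lower Ricci bound reads $\Ric(\tilde\omega_i)\ge-\frac{2r_i^2}{T}\tilde\omega_i$ (uniformly bounded below since $r_i$ stays bounded by the diameter of $(M,\omega_{t_i})$), and the contradiction hypothesis becomes $\Ric(\tilde\omega_i)(y_i)\not\le(2n-1)\tilde\omega_i(y_i)$ at $y_i\in B_{\tilde\omega_i}(x_i,1/i)$.

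Next, the Cheeger--Colding harmonic radius estimate applies: near-Euclidean volume together with a uniform lower Ricci bound yields, for $i$ large, harmonic coordinates on $B_{\tilde\omega_i}(x_i,c)$ for a uniform $c>0$ in which $\tilde\omega_i$ has $C^{1,\alpha}$ bounds and the ball is Gromov--Hausdorff close to a Euclidean ball. Rewriting the Monge--Amp\`ere equation \eqref{M-A: continuity 2} in these harmonic coordinates as a uniformly elliptic PDE for the local potential and invoking the uniform bound \eqref{potential bound: C^0} on $u_{t_i}$, the Evans--Krylov and Schauder theory promote these to interior $C^{k,\alpha}$ estimates on $\tilde\omega_i$. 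Along a subsequence $\tilde\omega_i$ converges smoothly on $B_{\tilde\omega_i}(x_i,c/2)$ to a K\"ahler metric which, by Colding's volume continuity and the Cheeger--Colding almost volume rigidity theorem, is isometric to the flat Euclidean metric. Hence $\Ric(\tilde\omega_i)\to 0$ uniformly on a neighborhood of $x_i$ containing $y_i$ for large $i$, which contradicts the rescaled failure.

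The main obstacle is the promotion from Cheeger--Colding harmonic-radius regularity, which controls the metric only in $C^{1,\alpha}$, to the $C^2$ (or smoother) convergence required to extract any information about the Ricci tensor. Gromov--Hausdorff closeness to a flat Euclidean ball alone gives no bound on $\Ric$; the upgrade relies essentially on the Monge--Amp\`ere structure of the continuity equation together with the uniform $C^0$ bound on the potential $u_t$ established in Section 2. Everything else in the argument is standard comparison geometry once this PDE regularity is in place.
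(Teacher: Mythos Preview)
Your overall framework---a blow-up contradiction argument, with the PDE structure of the continuity equation supplying the regularity upgrade---is the same as the paper's (which simply cites \cite[Proposition~8]{Ga13}). But your outlined argument contains a genuine gap at the regularity step.

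You write that ``near-Euclidean volume together with a uniform lower Ricci bound yields \ldots\ harmonic coordinates in which $\tilde\omega_i$ has $C^{1,\alpha}$ bounds.'' This is not available: Anderson's harmonic radius estimate requires a \emph{two-sided} Ricci bound, which is precisely the conclusion you are after. With only a lower Ricci bound, Cheeger--Colding theory gives Gromov--Hausdorff closeness to a Euclidean ball (Reifenberg flatness, hence bi-H\"older coordinates), but not $C^{1,\alpha}$ control on the metric. Without $C^{1,\alpha}$ harmonic coordinates there is no elliptic equation in which to run Evans--Krylov or Schauder, so the bootstrap does not start.

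The missing ingredient, as in Sz\'ekelyhidi's argument, is a point-picking step exploiting the specific form of the equation. Since $\Ric(\omega_t)=\frac{1}{t}(\omega_0-\omega_t)$, an upper Ricci bound is equivalent to a bound on $\tr_{\omega_t}\omega_0$. One point-picks on this quantity: if it is large at $y_i$, choose $q_i$ near $y_i$ and a scale $\rho_i$ so that $\tr_{\omega_{t_i}}\omega_0(q_i)=\rho_i^{-2}$ and $\tr_{\omega_{t_i}}\omega_0\le 4\rho_i^{-2}$ on $B_{\omega_{t_i}}(q_i,\rho_i)$. After rescaling by $\rho_i^{-2}$ one now has a \emph{two-sided} Ricci bound on the unit ball, and the almost-Euclidean volume ratio persists; Anderson then legitimately gives $C^{1,\alpha}$ harmonic coordinates. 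The bootstrap is most naturally carried out through the equation $\Ric(\tilde\omega_i)=\frac{1}{t_i}\omega_0-\frac{\rho_i^2}{t_i}\tilde\omega_i$ itself (in harmonic coordinates the metric satisfies an elliptic system with right-hand side $\Ric$), rather than through the Monge--Amp\`ere potential: the fixed form $\omega_0$ has $C^k$-norm of order $\rho_i^2$ in the rescaled chart and hence tends to zero, forcing the limit to be Ricci-flat and thus flat, while $\tr_{\tilde\omega_i}\omega_0(q_i)=1$ survives to give the contradiction.

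A minor point: you justify $r_i$ bounded via the diameter of $(M,\omega_{t_i})$, but the diameter bound is established only later in the paper (Section~4). The correct justification is that $(1-1/i)\vol(B^0_{r_i})\le\vol_{\omega_{t_i}}(M)$, and the right-hand side is a cohomological quantity uniformly bounded in $t$.
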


If $x\in\mathcal{R}$, then, by Colding's volume convergence theorem \cite{Co97}, there is $r=r(x)>0$ such that $\mathcal{H}^{2n}(B_{d_T}(x,r))\ge(1-\frac{\delta}{2})\vol(B_r^0)$, where $\mathcal{H}^{2n}$ denotes the Hausdorff measure. Suppose $x_i\in M$ satisfying $x_i\stackrel{d_{GH}}{\longrightarrow} x$, then by the volume convergence theorem again, $\vol_{\omega_{t_i}}(B_{\omega_{t_i}}(x_i,r))\ge(1-\delta)\vol(B_r^0)$ for $i$ sufficiently large. According to above lemma, together with Anderson's harmonic radius estimate \cite{An90}, there is $\delta^{'}=\delta^{'}(\alpha)>0$ for any $0<\alpha<1$ such that the $C^{1,\alpha}$ harmonic radius at $x_i$ is bigger than $\delta^{'}\delta r$. Passing to the limit, it gives a $C^{1,\alpha}$ harmonic coordinate on $B_{d_T}(x,r)$. This implies in particular that $B_{d_T}(x,r)\subset\mathcal{R}$. So $\mathcal{R}$ is open with a $C^{1,\alpha}$ K\"ahler metric, denoted by $\bar{\omega}_T$; moreover the metric $\omega_{t_i}$ converges in $C^{1,\alpha}$ topology to $\bar{\omega}_T$ on $\mathcal{R}$ for any $\alpha\in(0,1)$.

For any metric $\omega$ let $d_\omega$ be the length metric induced by $\omega$.

\begin{lemm}\label{metric completion: intrinsic}
$(M_T,d_T)=\overline{(\mathcal{R},d_{\bar{\omega}_T)}}$, the metric completion of $(\mathcal{R},d_{\bar{\omega}_T})$.
\end{lemm}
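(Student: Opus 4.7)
The plan is to reduce the lemma to three statements that together identify $(M_T,d_T)$ with the metric completion of $(\mathcal{R},d_{\bar{\omega}_T})$: (a) $\mathcal{R}$ is dense in $(M_T,d_T)$; (b) for $x,y\in\mathcal{R}$ one has $d_T(x,y)=d_{\bar{\omega}_T}(x,y)$; (c) $(M_T,d_T)$ is complete. Given (a)--(c), the natural inclusion $(\mathcal{R},d_{\bar{\omega}_T})\hookrightarrow(M_T,d_T)$ is an isometric embedding with dense image into a complete space, hence extends to an isometry of $\overline{(\mathcal{R},d_{\bar{\omega}_T})}$ onto $(M_T,d_T)$.

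Statement (c) is automatic, since a pointed Gromov--Hausdorff limit of complete length spaces is a complete length space. For (a), the uniform lower Ricci bound $\Ric(\omega_t)\ge -(2/T)\omega_t$ on $[T/2,T)$ together with non-collapsing (guaranteed by $([\omega_0]-Tc_1(M))^n>0$) places us in the Cheeger--Colding setting; their stratification of non-collapsed Ricci limits forces $\mathcal{S}$ to have Hausdorff codimension at least $2$ in $M_T$, so $\mathcal{R}$ is open and dense. For the easier inequality $d_T(x,y)\le d_{\bar{\omega}_T}(x,y)$ in (b), I would exploit the $C^{1,\alpha}$ convergence $\omega_{t_i}\to\bar{\omega}_T$ on compact subsets $K\subset\mathcal{R}$ already obtained via Anderson's harmonic radius estimate: a piecewise smooth curve $\gamma\subset\mathcal{R}$ from $x$ to $y$ can be lifted by the $C^{1,\alpha}$-embeddings $\Psi_i:K\hookrightarrow(M,\omega_{t_i})$ to curves $\gamma_i=\Psi_i\circ\gamma$ whose $\omega_{t_i}$-lengths converge to $L_{\bar{\omega}_T}(\gamma)$. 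Since $d_T(x,y)=\lim_i d_{\omega_{t_i}}(\gamma_i(0),\gamma_i(1))\le\lim_i L_{\omega_{t_i}}(\gamma_i)$, taking the infimum over $\gamma$ gives the bound.

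The main obstacle is the reverse inequality $d_T(x,y)\ge d_{\bar{\omega}_T}(x,y)$. For this I would invoke the weak geodesic convexity of the regular set in a non-collapsed Ricci limit under a codimension-$2$ singular locus, a consequence of the Cheeger--Colding segment inequality. Concretely, for any $\epsilon>0$ and any $d_T$-minimizing geodesic $\sigma$ from $x$ to $y$, one considers an $\epsilon$-tube $U_\epsilon$ about $\sigma$ and applies the segment inequality to a measured family of nearly parallel curves in $U_\epsilon$ joining points close to $x$ and $y$; the codimension bound $\mathcal{H}^{2n-1}(\mathcal{S})=0$ then forces almost every such curve to miss $\mathcal{S}$ and to have length within $\epsilon$ of $L(\sigma)=d_T(x,y)$. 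One such curve lies in $\mathcal{R}$ and, after joining small ends to $x,y$ in the harmonic charts produced above, yields $d_{\bar{\omega}_T}(x,y)\le d_T(x,y)+O(\epsilon)$. Letting $\epsilon\to 0$ gives the inequality. The codimension-$2$ bound is indispensable: in codimension $1$, $\mathcal{S}$ could separate $\mathcal{R}$ and no such perturbation would exist.
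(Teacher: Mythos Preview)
Your proposal is correct and follows essentially the same route as the paper: both arguments hinge on the codimension-$2$ bound for $\mathcal{S}$ and the Cheeger--Colding machinery (segment inequality / curve perturbation) to force the intrinsic length metric $d_{\bar\omega_T}$ on $\mathcal{R}$ to agree with $d_T$. The only difference is in packaging: the paper builds embeddings $\phi_i:K_i\hookrightarrow M$ with $\phi_i^*\omega_{t_i}\to\bar\omega_T$ in $C^{1,\alpha}$, argues (via $\codim\mathcal{S}\ge 2$, citing \cite{ChCo00,RoZh11,ZhZL10}) that these $\phi_i$ are Gromov--Hausdorff approximations, and then invokes uniqueness of the GH limit to identify $(M_T,d_T)$ with $\overline{(\mathcal{R},d_{\bar\omega_T})}$; you instead verify $d_T=d_{\bar\omega_T}$ on $\mathcal{R}$ directly and appeal to completeness and density. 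The underlying analytic content is the same.
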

\begin{proof}
There is an exhaustion of $\mathcal{R}$ by compact subsets $K_i$ with $K_i\subset K_{i+1}$ and a sequence of embeddings $\phi_i:K_i\rightarrow M$ such that (1) $\phi_i(x_T)=x_0$, (2) $\phi_i^*\omega_{t_i}\stackrel{C^{1,\alpha}}{\longrightarrow}\bar{\omega}_T$ on $\mathcal{R}$ and (3) $\phi_i$ defines a Gromov-Hausdorff approximation of the convergence $(M,\omega_{t_i},x_0)\stackrel{d_{GH}}{\longrightarrow}(M_T,d_T,x_T)$. The third fact follows from a standard argument using $\codim\mathcal{S}\ge 2$, cf. \cite{ChCo00}; see also \cite{RoZh11} or \cite{ZhZL10}. Moreover, (3) together with (1) implies $(M,\omega_{t_i},x_0)$ converges in the Gromov-Hausdorff topology to $\overline{(\mathcal{R},d_{\bar{\omega}_T)}}$. By the uniqueness of the complete limit space we have $(M_T,d_T)=\overline{(\mathcal{R},d_{\bar{\omega}_T)}}$.
\end{proof}

\begin{lemm}
$\mathcal{R}$ is geodesically convex in $M_T$ in the sense that any minimal geodesic with endpoints in $\mathcal{R}$ lies in $\mathcal{R}$.
\end{lemm}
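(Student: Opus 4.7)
The plan is to invoke the Colding--Naber theorem on H\"older continuity of tangent cones along interior minimizing geodesics in non-collapsed limit spaces with Ricci lower bound, after verifying that the approximating sequence $(M,\omega_{t_i})$ falls within its scope.

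First I would check the hypotheses. The curvature bound $\Ric(\omega_t)\ge -\frac{1}{t}\omega_t\ge -\frac{2}{T}\omega_t$ for $t\in[T/2,T)$ supplies a uniform lower Ricci bound along the sequence. The non-collapsing of $(M_T,d_T)$ follows from the assumption $([\omega_0]-Tc_1(M))^n>0$: since $\int_M\omega_t^n=[\omega_t]^n$ converges to $([\omega_0]+Tc_1(K_M))^n>0$, and the diameter is controlled by Myers-type comparison with the $C^0$ potential bounds already established, Bishop--Gromov gives a uniform positive lower bound on the volume of unit balls in $(M,\omega_{t_i})$ centered at any basepoint. Thus $(M_T,d_T)$ is a non-collapsed Ricci limit space.

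Next let $\gamma:[0,L]\to M_T$ be a unit-speed minimizing geodesic with $p=\gamma(0)$ and $q=\gamma(L)$ both in $\mathcal{R}$. Set $U=\{s\in[0,L]:\gamma(s)\in\mathcal{R}\}$. Since $\mathcal{R}$ is open in $M_T$ (established earlier via Anderson's harmonic radius estimate applied at points of large volume density), $U$ is open in $[0,L]$ and contains both endpoints. The Colding--Naber theorem asserts that along the interior of $\gamma$ the map $s\mapsto$ (pointed GH class of rescaled balls at $\gamma(s)$) is H\"older continuous at every fixed scale, with constants depending only on the Ricci lower bound and the non-collapsing constant. Combined with Cheeger--Colding volume convergence and the almost-Euclidean rigidity theorem, this propagates the property ``tangent cone is $\mathbb{C}^n$'' along $\gamma$: if $s_i\in U$ converges to $s_\infty\in(0,L)$, then tangent cones at $\gamma(s_\infty)$ are GH-limits of tangent cones at nearby $\gamma(s_i)$, forcing them to be Euclidean, hence $s_\infty\in U$. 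Therefore $U$ is also closed in $(0,L)$, and openness plus closedness together with $\{0,L\}\subset U$ give $U=[0,L]$.

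The main obstacle, and the reason the argument is not trivial, is that Colding--Naber's H\"older estimate along a geodesic requires parabolic estimates (on approximations of segment functions by solutions of the heat equation) that hold uniformly along the sequence $(M,\omega_{t_i})$; one must verify that the uniform Ricci lower bound and non-collapsing are indeed sufficient here, and that the estimate descends to the limit. A K\"ahler-specific alternative would be to exploit the $C^{1,\alpha}$ smoothness of $\bar{\omega}_T$ on $\mathcal{R}$ together with the eventual analytic structure of $\mathcal{S}$ coming from the base-point-free theorem to perturb any geodesic touching $\mathcal{S}$ into a shorter competitor avoiding $\mathcal{S}$, but this is more delicate and circular with statements (3)--(4) of the main theorem, so Colding--Naber is the cleanest route.
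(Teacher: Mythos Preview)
Your proposal is correct and follows essentially the same route as the paper: invoke Colding--Naber's H\"older continuity of tangent cones along minimizing geodesics, use openness of $\mathcal{R}$ to get interior regular points near the endpoints, and then propagate regularity along the interior via an open-and-closed argument. The paper's proof is a two-sentence version of exactly this.

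One small correction: your justification of non-collapsing is off. There is no Myers-type bound here since $\Ric(\omega_t)$ is only bounded below by a \emph{negative} constant, and in fact the global diameter bound is proved later in the paper (Section~4.2) and actually \emph{uses} this geodesic convexity lemma, so invoking it here would be circular. The correct and simpler observation is that the base point $x_0$ lies in $M_{\amp}$, where $\omega_{t_i}$ converges smoothly to $\omega_T$; hence $\vol_{\omega_{t_i}}(B_{\omega_{t_i}}(x_0,1))$ is uniformly bounded below directly from the smooth convergence, and Bishop--Gromov then gives the required non-collapsing on any bounded region of the (a priori possibly noncompact) pointed limit. With that fix, your argument goes through and matches the paper.
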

\begin{proof}
It is simply a consequence of Colding-Naber's H\"older continuity of tangent cones along a geodesic in $M_T$ \cite{CoNa12}. Actually, if $x,y\in\mathcal{R}$, then for any minimal geodesic connecting $x$ and $y$ a neighborhood of endpoints lies in $\mathcal{R}$, so the geodesic will never touch the singular set $\mathcal{S}$.
\end{proof}

Let $D$ be any divisor such that $\mathcal{S}_M\subset D$. Define the Gromov-Hausdorff limit of $D$
$$D_T=:\{x\in M_T|\mbox{ there exists }x_i\in D \mbox{ such that }x_i\stackrel{d_{GH}}{\longrightarrow}x.\}$$

\begin{prop}\label{Cheeger-Gromov convergence: prop 1}
$(M_T,d_T)$ is isometric to $\overline{(M\backslash D,d_{\omega_T})}$.
\end{prop}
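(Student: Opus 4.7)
My plan is to exhibit a canonical isometric embedding $\iota:(M\setminus D,d_{\omega_T})\hookrightarrow(M_T,d_T)$ with dense image; the proposition then follows at once from the universal property of the metric completion.

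First, since $\mathcal{S}_M\subset D$, we have $M\setminus D\subset M\setminus\mathcal{S}_M\subset M_{\reg}$, and on $M_{\reg}$ the metrics $\omega_t$ converge smoothly to $\omega_T$ by Corollary~\ref{smooth on regular locus} and the remark following it. Combined with the Cheeger-Gromov approximations $\phi_i:K_i\to M$ of Lemma~\ref{metric completion: intrinsic}, this smooth convergence realizes each $x\in M\setminus D$ as a well-defined limit $\iota(x)=\lim_i\phi_i^{-1}(x)\in\mathcal{R}$, giving an open embedding of $M\setminus D$ into $\mathcal{R}$ with $\iota^*\bar\omega_T=\omega_T$.

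For the isometry property, the inequality $d_T(\iota(x),\iota(y))\le d_{\omega_T}(x,y)$ is immediate: any piecewise smooth path in $M\setminus D$ has $\omega_T$-length equal to the $\bar\omega_T$-length of its $\iota$-image, and the latter dominates $d_T(\iota(x),\iota(y))$. For the reverse, given $\epsilon>0$ I lift a minimal $d_T$-geodesic $\gamma$ between $\iota(x)$ and $\iota(y)$ through $\phi_i$ to a nearly minimizing path $\gamma_i\subset M$; since $D$ has real codimension $2$ in the smooth manifold $M$, a standard transversality/cut-off perturbation pushes $\gamma_i$ off $D$ with arbitrarily small $\omega_{t_i}$-length increase, and the smooth convergence $\omega_{t_i}\to\omega_T$ on compacts in $M\setminus D$ then produces a path in $M\setminus D$ of $\omega_T$-length within $O(\epsilon)$ of $d_T(\iota(x),\iota(y))$. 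Density of $\iota(M\setminus D)$ in $M_T$ reduces by Lemma~\ref{metric completion: intrinsic} to density in $\mathcal{R}$, which follows from the same codimension 2 perturbation applied to $\phi_i(p)$ for $p\in\mathcal{R}$.

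The principal technical point will be controlling the $\omega_T$-length of the perturbed path: because $D\supset\mathcal{S}_M$ and $\omega_T$ need not be bounded near $\mathcal{S}_M$, the convergence $\omega_{t_i}\to\omega_T$ on $M\setminus\mathcal{S}_M$ is only locally uniform, and one cannot freely let the perturbation approach $\mathcal{S}_M$. I would handle this using the uniform metric comparison $\eta_T\le C\omega_t$ of Proposition~\ref{metric comparison: prop} together with the $C^0$-bound on $u_t$, which together keep the perturbed path at a definite distance from $\mathcal{S}_M$ while still allowing it to avoid the smooth part $D\setminus\mathcal{S}_M$ with negligible change in length.
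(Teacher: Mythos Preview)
Your approach has a genuine gap at the step where you compare the $\omega_T$-length of the perturbed path $\tilde\gamma_i$ with its $\omega_{t_i}$-length. The smooth convergence $\omega_{t_i}\to\omega_T$ is only available on \emph{fixed} compact subsets of $M\setminus\mathcal{S}_M$, but your perturbed paths $\tilde\gamma_i$ need not remain in any such set: if the limit geodesic $\gamma\subset\mathcal{R}$ meets $D_T\cap\mathcal{R}$ (which it may, since $D_T$ can protrude into the regular part), then $\phi_i(\gamma)$ comes arbitrarily close to $D$ in $(M,\omega_{t_i})$ as $i\to\infty$, and the codimension-2 perturbation produces paths $\tilde\gamma_i$ that wander through an $i$-dependent, shrinking neighborhood of $D$. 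On such paths you have no uniform comparison between $\omega_{t_i}$ and $\omega_T$. Your proposed remedy via $\eta_T\le C\omega_t$ and the $C^0$-bound on $u_t$ does not close this: the first inequality goes the wrong way (it bounds $\omega_t$ from below by a degenerate form, not from above by a fixed metric), and neither estimate yields a lower bound on the distance from $\tilde\gamma_i$ to $\mathcal{S}_M$ in any metric in which the convergence is uniform.

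The paper avoids this entirely by working in the limit space rather than in $M$. The key step you are missing is a structural fact about the Gromov--Hausdorff limit $D_T\subset M_T$ of the divisor: one shows that $D_T\cap\mathcal{R}$ is itself a complex-analytic subvariety of (complex) dimension $n-1$, using the uniform $C^{1,\alpha}$ harmonic coordinates on $\mathcal{R}$ together with the uniform bound on $\int_D\omega_t^{n-1}$ to control the degree of the local analytic sets $D\cap B_{\omega_{t_i}}(x_i,r)$ and pass to a limit. Once $D_T=\mathcal{S}\cup(D_T\cap\mathcal{R})$ is known to have Hausdorff codimension $\ge 2$ in $(M_T,d_T)$, the Cheeger--Colding argument (as in Lemma~\ref{metric completion: intrinsic}) shows directly that the intrinsic length metric on $M_T\setminus D_T$ coincides with $d_T$; combined with the Riemannian isometry $(M_T\setminus D_T,\bar\omega_T)\cong(M\setminus D,\omega_T)$, this gives the result with no need to compare $\omega_{t_i}$ and $\omega_T$ along moving paths.
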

\begin{proof}
First observe that, by the smooth convergence outside $D$, $M_T\backslash D_T\subset\mathcal{R}$ and $(M_T\backslash D_T,\bar{\omega}_T)$ is isometric to $(M\backslash D,\omega_T)$. We make the following

\begin{clai}
$D_T\backslash\mathcal{S}\subset\mathcal{R}$ is a subvariety of dimension $(n-1)$ if it is not empty.
\end{clai}
\begin{proof}[Proof of the Claim]
Let $x\in D_T\backslash\mathcal{S}$ and $x_i\in T$ converges to $x$. By the $C^{1,\alpha}$ convergence of $\omega_{t_i}$ around $x$, there are $C,r>0$ independent of $i$ and a sequence of harmonic coordinates in $B_{\omega_{t_i}}(x_i,r)$ such that $C^{-1}\omega_E\le\omega_{t_i}\le C\omega_E$ where $\omega_E$ is the Euclidean metric in the coordinates. Since the total volume of $D$ is uniformly bounded for any $\omega_t$, the local analytic sets $D\cap B_{\omega_{t_i}}(x_i,r)$ have a uniform bound of degree and so converge to an analytic set $D_T\cap B_{d_T}(x,r)$.
\end{proof}

It follows the Hausdorff dimension of $D_T=\mathcal{S}\cup(D_T\backslash\mathcal{S})$ is less than or equal to $2n-2$. Then as in Lemma \ref{metric completion: intrinsic}, following the discussion in \cite{ChCo00}, one can show that the length metric $d_{\bar{\omega}_T}$ on $M_T\backslash D_T$ is same as $d_T$. It infers the required isometry
$$(M_T,d_T)=\overline{(M_T\backslash D_T,d_{\bar{\omega}_T})}\stackrel{\iso}{\cong}\overline{(M\backslash D,d_{\omega_T})}.$$
\end{proof}

A direct corollary is

\begin{coro}
$(M,\omega_t,x_0)$ converges globally to $(M_T,d_T,x_T)$ in the Cheeger-Gromov sense as $t\rightarrow T$.
\end{coro}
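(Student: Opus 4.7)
The plan is to leverage the uniqueness built into Proposition~3.6: the right-hand side $\overline{(M\setminus D, d_{\omega_T})}$ is an intrinsic object attached to the unique bounded Monge-Amp\`ere solution $u_T$ from Section~2, so it carries no dependence on the subsequence $t_i\to T$ used to extract the Gromov-Hausdorff limit in the first place. This turns subsequential Gromov-Hausdorff convergence into full convergence for the family $\{(M,\omega_t,x_0)\}_{t\to T}$.

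Concretely, I would proceed as follows. First, pick any sequence $t_j\to T$ and any subsequence $t_{j_k}$. By the lower Ricci bound $\Ric(\omega_t)\ge -\tfrac{1}{t}\omega_t$ and Gromov's precompactness theorem, extract a further subsequence so that $(M,\omega_{t_{j_k}},x_0)\stackrel{d_{GH}}{\to}(M_T',d_T',x_T')$ for some pointed length space. Proposition~3.6, applied to this sub-subsequence with any fixed divisor $D\supset\mathcal{S}_M$, identifies $(M_T',d_T')$ isometrically with $\overline{(M\setminus D, d_{\omega_T})}$ via an isometry sending $x_T'$ to the image of $x_0$. Since $\omega_T$ is the unique bounded solution to the Monge-Amp\`ere equation \eqref{M-A: continuity 6} and $D$ is fixed, the target space on the right is one and the same pointed metric space --- namely $(M_T,d_T,x_T)$. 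A standard sub-subsequence argument then forces the full net $(M,\omega_t,x_0)$ to converge as $t\to T$.

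Second, I would upgrade this from pointed Gromov-Hausdorff convergence to Cheeger-Gromov convergence. The $C^{1,\alpha}$ convergence of $\omega_{t_i}$ to $\bar\omega_T$ on the regular set $\mathcal{R}$ was already established right after Lemma~3.3 using Anderson's harmonic radius estimate together with Lemma~3.3; moreover, on the smaller open set $M\setminus\mathcal{S}_M$ the metric converges smoothly by \cite{LaTi14}. Together with the Gromov-Hausdorff statement, this is precisely pointed Cheeger-Gromov convergence in the standard sense.

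Finally, to strip the base point and obtain genuinely global convergence of the compact manifolds $(M,\omega_t)$ to the compact space $(M_T,d_T)$, I would invoke compactness of $M_T$: this follows from the isometry with $\overline{(M\setminus D,d_{\omega_T})}$ together with the uniform diameter bound (the central theme of the paper, established via the a priori estimates of Section~2 and Proposition~2.3 giving $\eta_T\le C\omega_t$). Once both the approximants and the limit have uniformly bounded diameter, pointed Cheeger-Gromov convergence is equivalent to unpointed Cheeger-Gromov convergence. The only step I regard as delicate is the compactness/diameter input, but that is exactly what the estimates leading up to Proposition~3.6 were designed to furnish; the remainder is a formal subsequence argument using uniqueness of the limit.
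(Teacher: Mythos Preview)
Your steps 1 and 2 are correct and are exactly what the paper has in mind: the corollary is stated immediately after Proposition~3.6 with no proof, because the identification of every subsequential limit with the fixed space $\overline{(M\setminus D,d_{\omega_T})}$ forces convergence of the full family by the standard sub-subsequence argument.

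Your step 3, however, rests on a misreading and a circularity. The word ``globally'' in the corollary does not mean ``unpointed''; the statement itself carries base points $x_0$ and $x_T$, and at this stage in the paper $(M_T,d_T)$ is still only known to be a complete length space, ``maybe noncompact in a prior'' (their words, just before Lemma~3.3). ``Globally'' here simply means convergence of the full family $t\to T$ rather than of a particular subsequence $t_i$. So step 3 is not needed for what is being asserted. More seriously, your justification for compactness is not available at this point: Proposition~2.3 only gives $\eta_T\le C\omega_t$, and since $\eta_T$ is degenerate along $M_{\sing}$ this yields no diameter control on $\omega_t$. The compactness of $M_T$ is established only in Section~4 (Proposition~4.8), after building the algebraic structure in \S3.7 and proving surjectivity of $\Phi_T$ in \S4.1; the uniform diameter bound on $(M,\omega_t)$ is then deduced in \S4.2 from that compactness. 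Invoking either of these here would be circular. Drop step~3 and your argument matches the paper's.
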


Let $M_{\sing}$ be the subvariety of critical points of $\Phi$ and $M_{\reg}=M\backslash M_{\sing}$. We have shown that $\omega_T$ is a smooth metric on $M_{\reg}$. Another corollary is

\begin{coro}
$(M_T,d_T)$ is isometric to $\overline{(M_{\reg},d_{\omega_T})}$.
\end{coro}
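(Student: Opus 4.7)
The plan is to reduce the corollary to Proposition \ref{Cheeger-Gromov convergence: prop 1} by enclosing the analytic subvariety $M_{\sing}$ inside a divisor that also contains $\mathcal{S}_M$. The key preliminary observation is the inclusion $\mathcal{S}_M\subset M_{\sing}$, equivalently $M_{\reg}\subset M_{\amp}$: on $M_{\reg}$ the Kodaira-type map $\Phi$ is an immersion, so $\eta_T=\frac{1}{\ell_0}\Phi^*\omega_{FS}$ is a smooth closed semipositive $(1,1)$-form on $M$ which is strictly positive on $M_{\reg}$ and represents the limit class $[\omega_0]+TK_M$. Since the augmented base locus $\mathbf{B}_+$ of a big nef class agrees with its non-K\"ahler locus, the existence of a smooth K\"ahler representative of the class in a neighborhood of any point of $M_{\reg}$ forces $M_{\reg}\subset M_{\amp}$.

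Next, $M_{\sing}$ is a proper analytic subvariety of the projective manifold $M$, so there exists an effective divisor $D$ on $M$ with $M_{\sing}\subset\supp D$ (take a section of a sufficiently positive power of a very ample line bundle that vanishes along $M_{\sing}$). By the previous step we also have $\mathcal{S}_M\subset D$, and Proposition \ref{Cheeger-Gromov convergence: prop 1} applies to yield
\[(M_T,d_T)\cong\overline{(M\setminus D,d_{\omega_T})}.\]

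It then remains to compare $M\setminus D$ with $M_{\reg}$. Both are open subsets of $M$ on which $\omega_T$ is smooth by Corollary \ref{smooth on regular locus}, and their difference $D\cap M_{\reg}=D\setminus M_{\sing}$ is a (possibly reducible) analytic subset of complex codimension at least one in the smooth K\"ahler manifold $(M_{\reg},\omega_T)$. A standard path-perturbation argument, using that a real-codimension-two analytic set can be avoided by rectifiable curves without losing length, then shows that the intrinsic length metrics on $M\setminus D$ and on $M_{\reg}$ induced by $\omega_T$ agree on their common domain of definition, and consequently their metric completions are isometric. Combining with the previous display gives the desired isometry $(M_T,d_T)\cong\overline{(M_{\reg},d_{\omega_T})}$.

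The main (and only mildly nontrivial) obstacle is the inclusion $\mathcal{S}_M\subset M_{\sing}$, which rests on the equivalence between the augmented base locus and the non-K\"ahler locus for big nef classes; everything else is an application of Proposition \ref{Cheeger-Gromov convergence: prop 1} together with routine codimension-two perturbation on the smooth locus of $\omega_T$.
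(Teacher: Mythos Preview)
Your overall strategy is the same as the paper's: invoke Proposition \ref{Cheeger-Gromov convergence: prop 1} for a suitable divisor $D$, then compare the two open sets $M\setminus D$ and $M_{\reg}$ by a codimension-two path-perturbation argument. In the paper the divisor $D$ is simply taken large enough that $M\setminus D\subset M_{\reg}$ (equivalently $M_{\sing}\subset D$), after which the one-line observation $M_{\reg}\cap D$ has real codimension $\ge 2$ in $(M_{\reg},\omega_T)$ finishes the proof. Your argument reaches the same point, so the proposal is correct.

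The one place where you do more work than necessary, and where the justification is shaky, is the inclusion $\mathcal{S}_M\subset M_{\sing}$. You use this to guarantee that any $D\supset M_{\sing}$ automatically contains $\mathcal{S}_M$, but this is not needed: since $M_{\sing}$ and $\mathcal{S}_M$ are both proper subvarieties of the projective manifold $M$, you may simply choose $D$ to contain their union and apply Proposition \ref{Cheeger-Gromov convergence: prop 1} directly. Moreover, the justification you give for $\mathcal{S}_M\subset M_{\sing}$ is not quite complete: the non-K\"ahler locus is defined via K\"ahler \emph{currents} (closed positive currents dominating a K\"ahler form), and $\eta_T$, being only semipositive, is not one. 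The inclusion is nevertheless true; the cleanest route is Nakamaye's theorem $\mathbf{B}_+(L)=\mathrm{Null}(L)$ for nef and big $L$, together with the observation that if $V\subset\mathrm{Null}(L)$ then $\Phi(V)$ has strictly smaller dimension than $V$, so $\Phi$ cannot be an immersion at any point of $V$. But as noted, you can sidestep this entirely by enlarging $D$.
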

\begin{proof}
Notice that $M_{\reg}\backslash(M\backslash D)=M_{\reg}\cap D$ has codimension 2 in $(M_{\reg},\omega_T)$. Thus, the length metric $d_{\omega_T}$ on $M\backslash D$ equals to the restricted extrinsic metric from $(M_{\reg}, d_{\omega_T})$. Since $M\backslash D$ is dense in $M_{\reg}$, we conclude the desired result
$$(M_T,d_T)\stackrel{\iso}{\cong}\overline{(M\backslash D,d_{\omega_T})}=\overline{(M_{\reg},d_{\omega_T})|_{M\backslash D}}=\overline{(M_{\reg},d_{\omega_T})}.$$
\end{proof}

\begin{lemm}
The identity map $\id: M_{\reg}\rightarrow M$ gives a Gromov-Hausdorff approximation representing the convergence $(M,\omega_t)\rightarrow (M_T,d_T)$ as $t\rightarrow T$.
\end{lemm}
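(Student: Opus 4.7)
The plan is to verify the two defining conditions of a Gromov--Hausdorff approximation, interpreting $\id\colon M_{\reg}\to M$ as a map from $M_{\reg}$, regarded as a dense subset of the limit $(M_T,d_T)$ through the canonical isometric embedding $M_{\reg}\hookrightarrow\overline{(M_{\reg},d_{\omega_T})}=(M_T,d_T)$ provided by the preceding Corollary, to the approximating space $(M,\omega_t)$. The conditions to check are the density of the image, and the asymptotic isometry $|d_{\omega_t}(x,y)-d_T(x,y)|\to 0$ uniformly in $x,y\in M_{\reg}$ as $t\to T$.

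The density is immediate in both directions. Since $M_{\sing}$ is a proper complex subvariety, $M_{\reg}$ is open and dense in $M$, hence dense with respect to any smooth metric $\omega_t$; density in $(M_T,d_T)$ is exactly the content of the corollary that identifies $(M_T,d_T)$ with the metric completion of $(M_{\reg},d_{\omega_T})$. For the upper bound in the distance comparison, fix $\epsilon>0$, pick a finite $\epsilon$-net $\{p_1,\ldots,p_N\}\subset M_{\reg}$ of $(M_T,d_T)$, together with almost-minimizing $\omega_T$-paths $\gamma_{ij}$ in $M_{\reg}$ between each pair $p_i,p_j$ and small compact neighborhoods of the $p_i$ inside $M_{\reg}$. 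Their union is a compact subset $K\subset M_{\reg}$, and by the smooth convergence $\omega_t\to\omega_T$ on $M_{\reg}$ (the Remark following Corollary~\ref{smooth on regular locus}), the metrics converge uniformly on $K$. Hence $L_{\omega_t}(\gamma_{ij})\to L_{\omega_T}(\gamma_{ij})$, giving $d_{\omega_t}(p_i,p_j)\le d_T(p_i,p_j)+2\epsilon$ for $t$ close to $T$, and the triangle inequality (combined with uniform distance comparison in the compact neighborhoods of the $p_i$) propagates this to the uniform upper bound $d_{\omega_t}(x,y)\le d_T(x,y)+C\epsilon$ on all of $M_{\reg}\times M_{\reg}$.

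The matching lower bound follows from the global Cheeger--Gromov convergence $(M,\omega_t,x_0)\to(M_T,d_T,x_T)$ established in the preceding Corollary: any $\epsilon_t$-GH approximation $\Psi_t\colon M\to M_T$ with $\epsilon_t\to 0$ satisfies $|d_{\omega_t}(x,y)-d_T(\Psi_t(x),\Psi_t(y))|<\epsilon_t$, while the smooth convergence on $M_{\reg}\subset\mathcal{R}$ forces $\Psi_t(x)\to x$ in $(M_T,d_T)$ uniformly on any compact subset of $M_{\reg}$; combining this with the triangle inequality and the uniform continuity of $d_T$ on the compact $M_T\times M_T$ produces $d_{\omega_t}(x,y)\ge d_T(x,y)-C\epsilon$. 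The main obstacle is precisely this uniformity across all of the non-compact open set $M_{\reg}$: the smooth-convergence step only delivers uniformity on compact subsets, and the above $\epsilon$-net argument, together with the diameter bound $\diam(M,\omega_t)\le C$ furnished by the GH convergence, is what bridges the gap from compact-subset estimates to a global one.
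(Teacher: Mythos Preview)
Your argument relies on several facts that are \emph{not yet available} at this point in the paper, so as written it is circular.

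First, you invoke smooth convergence $\omega_t\to\omega_T$ on all of $M_{\reg}$; but at this stage only the smooth convergence on the ample locus $M\backslash\mathcal{S}_M$ (equivalently on $M\backslash D$ for any divisor $D\supset\mathcal{S}_M$) has been established. Smooth convergence on the larger set $M_{\reg}$ is exactly Proposition~\ref{Cheeger-Gromov convergence: prop 2}(1), which is proved \emph{after} this lemma and in fact uses it. This is why the paper first reduces from $M_{\reg}$ to the dense subset $M\backslash D$, where the needed convergence is already in hand.

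Second, and more seriously, your $\epsilon$-net argument and your appeal to a uniform diameter bound $\diam(M,\omega_t)\le C$ both presuppose that $(M_T,d_T)$ is compact. At this point in \S3.2 only \emph{pointed} Gromov--Hausdorff convergence has been shown, and the text explicitly notes that $(M_T,d_T)$ ``may be noncompact a priori''. Compactness of $M_T$ and the uniform diameter bound are only obtained in \S4, via the algebraic structure and Song's normal-direction estimate. Pointed GH convergence gives no diameter control, so neither the existence of a finite $\epsilon$-net of $M_T$ nor the claimed bound $\diam(M,\omega_t)\le C$ is justified here.

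The paper's route avoids both issues: it passes to $M\backslash D$ and then repeats the argument of Lemma~\ref{metric completion: intrinsic}, which works in the pointed setting and uses only that the complement $D_T$ (containing $\mathcal{S}$) has Hausdorff codimension $\ge 2$, so that length-minimizing curves in $M_T$ can be pushed off $D_T$ with arbitrarily small length increase. That codimension-2 mechanism is what replaces your compactness-based net argument.
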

\begin{proof}
First observe that $(M\backslash D,d_T)$ is dense in $(M_{\reg},d_T)$ and $(M\backslash D,d_T)=(M\backslash D,d_{\omega_T})$. So, it suffices to show that $\id:(M\backslash D,d_{\omega_T})\rightarrow (M,\omega_t)$ defines a Gromov-Hausdorff approximation. This follows from the same argument as in the proof of Lemma \ref{metric completion: intrinsic}.
\end{proof}

Therefore, the identity map $\id$ extends to an isometry
$$\overline{\id}:\overline{(M_{\reg},d_{\omega_T})}\rightarrow (M_T,d_T).$$

\begin{prop}\label{Cheeger-Gromov convergence: prop 2}
{\rm(1)} $\omega_{t}$ converges smoothly on $M_{\reg}$ to $\omega_T$.

{\rm(2)} $\overline{\id}(M_{\reg})=\mathcal{R}$, the regular set of $M_T$.
\end{prop}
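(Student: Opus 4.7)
I would prove smooth convergence by establishing uniform $C^k$ estimates on the potentials $u_t$ over compact subsets of $M_{\reg}$. Fix $K\Subset M_{\reg}$. There $\eta_T$ is a smooth strictly positive K\"ahler form, and Proposition 2.8 gives the uniform lower bound $\omega_t\ge c\,\eta_T$ on $K$ for $t\in[\tfrac{T}{2},T)$. The Monge-Amp\`ere equation $\omega_t^n=e^{u_t/t}\Omega$ together with the $C^0$ bound on $u_t$ (Lemma 2.2) makes $\omega_t^n$ uniformly equivalent to $\Omega$, hence to $\eta_T^n$, on $K$. Combining the lower bound with this volume equivalence forces the matching upper bound $\omega_t\le C\,\eta_T$, so $\omega_t$ is uniformly two-sided equivalent to the smooth metric $\eta_T$ on $K$. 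Writing $\omega_t=\eta_t+i\partial\bar\partial u_t$ with $\eta_t$ smooth on $M_{\reg}$, this amounts to a uniform $C^{1,1}$ bound on $u_t$. Evans-Krylov applied to the complex Monge-Amp\`ere equation then yields $C^{2,\alpha}$ bounds on compact interiors, and differentiating the equation plus Schauder bootstrapping upgrade these to uniform $C^k$ bounds for every $k$. Since $u_t\to u_T$ uniformly (Lemma 2.3), Arzel\`a-Ascoli promotes this to smooth convergence of $u_t$, hence of $\omega_t$, on compact subsets of $M_{\reg}$.

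\textbf{Part (2).} The inclusion $\overline{\id}(M_{\reg})\subset\mathcal{R}$ is immediate from (1): for each $x\in M_{\reg}$, smooth convergence of $\omega_{t_i}$ to $\omega_T$ in an $\omega_T$-neighborhood of $x$ gives a smooth Riemannian neighborhood of $\overline{\id}(x)$ in $M_T$, so its tangent cone is $\mathbb{R}^{2n}$ and $\overline{\id}(x)\in\mathcal{R}$. For the reverse inclusion $\mathcal{R}\subset\overline{\id}(M_{\reg})$, I would argue by contradiction. Since $\overline{\id}$ is an isometry of the completion, any $y\in\mathcal{R}$ has a unique preimage $\tilde y\in\overline{(M_{\reg},d_{\omega_T})}$; if $\tilde y\notin M_{\reg}$, represent it by a Cauchy sequence $x_i\in M_{\reg}$ and pass, by compactness of $M$, to a subsequence with $x_i\to x_\infty\in M$. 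Necessarily $x_\infty\in M_{\sing}\subset\mathcal{S}_M$, for otherwise smoothness of $\omega_T$ at $x_\infty\in M_{\reg}$ would force $\tilde y=x_\infty\in M_{\reg}$. From $y\in\mathcal{R}$, Colding's volume convergence theorem and Lemma 3.1 yield $\Ric(\omega_{t_i})\le(2n-1)r^{-2}\omega_{t_i}$ on $B_{\omega_{t_i}}(x_i,\delta r)$ for some small $r>0$ and $i$ large. The continuity equation $\Ric(\omega_{t_i})=t_i^{-1}(\omega_0-\omega_{t_i})$ converts this into $\omega_{t_i}\ge c\,\omega_0$ on the same ball, and the Monge-Amp\`ere equation with bounded $u_{t_i}$ then pins $\omega_{t_i}$ uniformly between two positive multiples of the fixed smooth metric $\omega_0$. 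Passing to the limit, $\omega_T$ extends to a metric equivalent to $\omega_0$ in an ordinary $M$-neighborhood of $x_\infty$, and standard pluripotential regularity smooths $u_T$ across $x_\infty$.

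\textbf{Main obstacle.} Closing the contradiction in the last step is the subtlest part. Proposition 2.8 alone only gives the one-sided bound $\eta_T\le C\omega_T$, which is compatible with $\eta_T$ degenerating at $x_\infty$ while $\omega_T$ stays non-degenerate, so smooth non-degeneracy of $\omega_T$ near $x_\infty$ does not by itself contradict $x_\infty\in M_{\sing}$. To close the loop I would exploit the identity $\eta_T=\omega_T-i\partial\bar\partial u_T$: smoothness of $\omega_T$ and $u_T$ at $x_\infty$ propagates to smoothness of $\eta_T=\Phi^*\omega_{FS}/\ell_0$, and combined with a Schwarz-type argument on the holomorphic map $\Phi$ one should push the non-degeneracy of $\omega_T$ into non-degeneracy of $d\Phi$ at $x_\infty$, contradicting $x_\infty\in M_{\sing}$. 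Rigorously bridging the analytic regularity of $\omega_T$ at a point of the non-ample locus to the algebraic regularity of the Kodaira map $\Phi$ there is the principal technical hurdle I foresee.
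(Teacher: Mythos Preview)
Your argument is correct and in fact more direct than the paper's. The paper establishes smooth convergence on $M_{\reg}$ by going through the Gromov--Hausdorff machinery: almost-Euclidean volume near each $x\in K$ (via Colding's volume convergence) feeds into Lemma~\ref{local regularity} to produce a Ricci upper bound, and only then does the continuity equation yield two-sided metric equivalence. You bypass this detour by using the Schwarz estimate of Proposition~\ref{metric comparison: prop} directly, exactly as in the proof of Corollary~\ref{smooth on regular locus}, observing that the constants there are already uniform in $t$. Both routes arrive at $C^{-1}\eta_T\le\omega_t\le C\eta_T$ on $K$, after which the bootstrap is identical.

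\textbf{Part (2).} Here there is a genuine gap: your proposed closing does not work, and the missing idea is the one the paper supplies. A Schwarz-type inequality for $\Phi:(M,\omega_T)\to(\mathbb{P}^N,\omega_{FS})$ can only bound $\Phi^*\omega_{FS}$ from \emph{above}, never from below, so it cannot force $d\Phi$ to be non-degenerate at $x_\infty$. And the identity $\eta_T=\omega_T-\sqrt{-1}\partial\bar\partial u_T$ is perfectly compatible with $\omega_T$ being smooth and positive while $\eta_T$ degenerates: nothing prevents $\sqrt{-1}\partial\bar\partial u_T$ from absorbing the degeneracy. So the contradiction does not close at the level of pointwise regularity.

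The paper closes it with a cohomological volume argument. If some $p\in\mathcal{R}$ is a limit of points $p_t\in M_{\sing}$, then the $C^{1,\alpha}$ control near $p$ gives harmonic coordinates in which $\omega_t$ is uniformly equivalent to a Euclidean metric; the standard Lelong-type lower bound for analytic sets then forces
\[
\vol_{\omega_t}\bigl(M_{\sing}\cap B_{\omega_t}(p_t,r)\bigr)\ \ge\ c\,r^{2m},\qquad m=\dim_{\mathbb{C}}M_{\sing},
\]
uniformly in $t$. On the other hand, since $[\omega_t]=[\eta_t]$ one has $\int_{M_{\sing}}\omega_t^m=\int_{M_{\sing}}\eta_t^m$, and because $\Phi$ contracts every top-dimensional component $V$ of $M_{\sing}$ to something of strictly smaller dimension, $\eta_T^m|_V=(\Phi|_V)^*\omega_{FS}^m=0$; hence $\int_{M_{\sing}}\eta_t^m\to 0$ as $t\to T$. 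These two facts are incompatible, and that is the contradiction. Your setup (passing to $x_\infty\in M_{\sing}$ and establishing $\omega_t\sim\omega_0$ nearby) is essentially the same as the paper's, so you can graft this volume argument directly onto what you already have.
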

\begin{proof}
(1) For any compact subset $K\subset M_{\reg}$, there is $r=r_K>0$ such that $\vol_{\omega_T}\big(B_{d_T}(x,r)\big)\ge(1-\frac{\delta}{2})\vol(B_r^0)$ for any $x\in K$, where $\delta$ is the constant in Lemma \ref{local regularity}. Then, since the identity map represents the Gromov-Hausdorff convergence, we have $\vol_{\omega_{t_i}}\big(B_{\omega_t}(x,r)\big)\ge(1-\delta)\vol(B_r^0)$ for any $x\in K$ and $t$ sufficiently close to $T$. By Lemma \ref{local regularity}, the Ricci curvature $\Ric(\omega_{t})\le C\omega_{t}$ uniformly on $K$ for some constant $C=C(K)$. Together with the uniform $L^\infty$ bound of $u_{t}$, the continuity equation (\ref{M-A: continuity}) then shows
$$C^{-1}\omega_{t_0}\le \omega_{t}\le C\omega_{t_0},\,\mbox{ on }K$$
for some $C=C(K)$ independent of $t<T$. Then by a standard bootstrap to the complex Monge-Amp\`ere equation (\ref{M-A: continuity 2}) we get the uniform $C^k$ estimate of the metrics $\omega_{t}$ on $K$, for any $k\ge 1$. This is sufficient to prove the smooth convergence of $\omega_{t}$ on $K$.

(2) Since $M_{\reg}$ has smooth structure in $M_T$ we have immediately $\overline{\id}(M_{\reg})\subset\mathcal{R}$. Next we show the converse, namely $\overline{\id}(M_{\reg})$ is the maximal regular subset of $M_T$. The idea follows from \cite{RoZh11}; see also \cite{So14}. We argue by contradiction. Suppose we have a point $p\in\mathcal{R}\backslash\overline{\id}(M_{\reg})$, then there exists a family of points $p_t\in M_{\sing}$ such that $p_t\rightarrow p$. Denote $m=\dim_{\mathbb{C}}M_{\sing}$. By $C^{1,\alpha}$ convergence on $\mathcal{R}$, there exist $C,r>0$ independent of $t$ and a sequence of harmonic coordinates on $B_{\omega_{t}}(p_t,r)$ such that $C^{-1}\omega_{E}\le\omega_{t}\le C\omega_E$ where $\omega_E$ is the Euclidean metric in this coordinate. Then
$$\vol_{\omega_{t}}\big(M_{\sing}\cap B_{\omega_{t}}(p_t,r)\big)=\int_{M_{\sing}\cap B_{\omega_{t}}(p_t,r)}\omega_{t}^m\ge \int_{M_{\sing}\cap B_{\omega_E}(C^{-1/2}r)}(C^{-1}\omega_E)^m$$
which has a uniform lower bound $C^{-2m}c(m)r^{2m}$ where $c(m)$ is the volume of unit sphere in $\mathbb{C}^m$. This follows from the classical analysis of the lower volume estimate or multiplicity estimate of an analytical set in the Euclidean space. However, this contradicts with the degeneration of the limit metric $\eta_T$ along $M_{\sing}$:
$$\vol_{\omega_{t}}\big(M_{\sing}\cap B_{\omega_{t}}(p_t,r)\big)\le\vol_{\omega_{t}}(M_{\sing})=\int_{M_{\sing}}\omega_{t}^m
=\int_{M_{\sing}}\eta_{t}^m=\bigg(\frac{T-t}{T}\bigg)^m\int_{M_{\sing}}\omega_0^m$$
which tends to 0 as $t\rightarrow T$. So we have $\overline{\id}(M_{\reg})\supset\mathcal{R}$ as desired.
\end{proof}

It follows that, for any Hermitian line bundle $(L',h')$ and $k\in\mathbb{Z}$, the twisted line bundle $(L'\otimes K_M^k,h'\otimes(\omega_{t_i}^{-n})^k)$ converges smoothly to a limit Hermitian line bundle $(L'\otimes K_{\mathcal{R}}^k,h'\otimes(\bar{\omega}_T^{-n})^k)$ on $\mathcal{R}$. Another corollary is $M_{\amp}\subset M_{\reg}$.



\subsection{$L^2$ estimate to $\bar\partial$-operator}

Let $L=\ell_0(L^{'}+TK_M)$ be the limit line bundle. Up to raising the power $\ell_0$, we assume that (i) $L$ is semi-ample and (ii) $L-K_M$ is ample. The later follows from $L-K_M=\ell_0(L^{'}+tK_M)$ where $L'+tK_M$ is ample because $t=T-\frac{1}{\ell_0}<T$.

Let $\eta_T$, $u_t$ be defined as in the previous section. Choose a Hermitian metric $h_{L^{'}}$ on $L^{'}$ whose curvature form $\Theta_{h_{L^{'}}}=\omega_0$ and put $h_t=h_{L^{'}}^{\ell_0}\otimes \big(\omega_t^{-n}\big)^{\ell_0 T}$, a family of Hermitian metric on $L$ for any $0\le t<T$. The curvature form of $h_t$ is
\begin{equation}\label{curvature: h_t}
\Theta_{h_t}=\ell_0\frac{T}{t}\omega_t-\ell_0\frac{T-t}{t}\omega_0.
\end{equation}


\begin{lemm}
For any $k\ge 1$ and smooth section $\xi\in \Gamma(T^{0,1}\otimes L^k)$ we have
\begin{equation}
\int_M\big(|\bar{\partial}\xi|^2+|\bar{\partial}^*\xi|^2\big)\omega_t^n\ge\frac{k\ell_0 T-1}{t}\int_M|\xi|^2\omega_t^n,\,\forall t\in[T-\frac{1}{k\ell_0},T).
\end{equation}
\end{lemm}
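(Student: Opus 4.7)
The plan is to apply the Weitzenb\"och formula (\ref{Weitzenboch formula: 01}) to the bundle $L^k$, take inner product with $\xi$, integrate over $M$, and then show that the resulting curvature term dominates $\frac{k\ell_0 T-1}{t}|\xi|^2$ in the range of $t$ under consideration.

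First I would write (\ref{Weitzenboch formula: 01}) with the Hermitian line bundle taken to be $(L^k, h_t^k)$, whose Chern curvature is $k\Theta_{h_t}$. Integration against $\xi$ gives
\begin{equation*}
\int_M\bigl(|\bar\partial\xi|^2+|\bar\partial^*\xi|^2\bigr)\omega_t^n
=\int_M|\bar\nabla\xi|^2\omega_t^n
+\int_M\bigl\langle(k\Theta_{h_t}+\Ric(\omega_t))(\xi,\cdot),\xi\bigr\rangle\omega_t^n,
\end{equation*}
where the first term on the right is manifestly nonnegative and can be dropped.

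Next I would evaluate the curvature $(1,1)$-form $k\Theta_{h_t}+\Ric(\omega_t)$. Formula (\ref{curvature: h_t}) gives
$k\Theta_{h_t}=k\ell_0\tfrac{T}{t}\omega_t-k\ell_0\tfrac{T-t}{t}\omega_0$,
while the continuity equation (\ref{M-A: continuity}) is exactly $\Ric(\omega_t)=\tfrac{1}{t}(\omega_0-\omega_t)$. Adding these and collecting terms yields the clean identity
\begin{equation*}
k\Theta_{h_t}+\Ric(\omega_t)=\frac{k\ell_0 T-1}{t}\,\omega_t+\frac{1-k\ell_0(T-t)}{t}\,\omega_0.
\end{equation*}
The first piece, paired with $\xi$ via the convention in (\ref{Weitzenboch formula: 01}), produces exactly $\frac{k\ell_0 T-1}{t}|\xi|^2$ (in a unitary frame for $\omega_t$ the form $\omega_t$ acts as the identity on $T^{0,1}M$). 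The second piece is a nonnegative multiple of $\omega_0>0$ precisely when $1-k\ell_0(T-t)\ge 0$, i.e.\ when $t\ge T-\frac{1}{k\ell_0}$; this is where the hypothesis on $t$ enters. For such $t$, pairing $\omega_0(\xi,\cdot)$ with $\xi$ is pointwise nonnegative, so the whole second piece can be discarded.

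There is essentially no obstacle beyond checking the sign bookkeeping; the only content is the happy cancellation between the curvature of $h_t$ and the Ricci term of $\omega_t$, which is built into the very form of the continuity equation. Combining the two reductions one arrives at
\begin{equation*}
\int_M\bigl(|\bar\partial\xi|^2+|\bar\partial^*\xi|^2\bigr)\omega_t^n
\ge\frac{k\ell_0 T-1}{t}\int_M|\xi|^2\omega_t^n,\quad t\in\bigl[T-\tfrac{1}{k\ell_0},T\bigr),
\end{equation*}
which is the claimed inequality. (The hypotheses $L$ semi-ample and $L-K_M$ ample, although not logically needed at this purely differential step, are the reason the lower bound $\frac{k\ell_0 T-1}{t}$ will be strictly positive for $k\ge 1$ and later yield a usable H\"ormander estimate.)
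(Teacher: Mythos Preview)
Your proof is correct and follows exactly the paper's approach: compute $k\Theta_{h_t}+\Ric(\omega_t)$ from (\ref{curvature: h_t}) and the continuity equation, then invoke the Weitzenb\"och formula (\ref{Weitzenboch formula: 01}). Your coefficient $\tfrac{1-k\ell_0(T-t)}{t}$ in front of $\omega_0$ is the correct one (the paper's displayed $1-k\ell_0(t-T)$ is a sign typo, as the restriction $t\ge T-\tfrac{1}{k\ell_0}$ would otherwise be unnecessary).
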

\begin{proof}
Combining with the curvature formulas (\ref{curvature: h_t}) and $\Ric(\omega_t)=\frac{\omega_0-\omega_t}{t}$ derives
$$\Theta_{h_t^k}+\Ric(\omega_t)=\frac{\omega_0}{t}\big(1-k\ell_0(t-T)\big)+\frac{\omega_t}{t}(k\ell_0 T-1).$$
Then apply the Weitzenb\"och formula (\ref{Weitzenboch formula: 01}).
\end{proof}

\begin{prop}[$L^2$ estimate]\label{L^2 estimate: prop 1}
For any $k\ge\frac{2}{\ell_0 T}$ and $t\in[T-\frac{1}{k\ell_0},T)$ and $\xi\in C^\infty(M,T^{1,0}M\otimes L^k)$ with $\bar{\partial}\xi=0$ we can find a solution $\bar{\partial}\varsigma=\xi$ which satisfies
\begin{equation}\label{L^2 estimate: 1}
\int_M|\varsigma|_{h_t^k}^2\omega_t^n\le\frac{2}{k}\int_M|\xi|_{h_t^k\otimes\omega_t}^2\omega_t^n.
\end{equation}
\end{prop}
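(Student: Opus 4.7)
The plan is to derive this as a direct application of the standard H\"ormander $L^2$-existence theorem, with the preceding lemma supplying the required curvature positivity. Fix $k \ge 2/(\ell_0 T)$ and $t \in [T - 1/(k\ell_0), T)$. Since $t < T$, the metric $\omega_t$ is smooth and $(M,\omega_t)$ is a compact K\"ahler manifold, so the Hilbert-space theory of $\bar\partial : L^2(M,L^k) \to L^2(M,\Lambda^{0,1}\otimes L^k)$ with respect to $h_t^k$ and $\omega_t$ is entirely straightforward, with no completeness or boundary-value issues to address.

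First I would reinterpret the preceding lemma as the spectral estimate
$$\|\bar\partial\eta\|^2 + \|\bar\partial^*\eta\|^2 \ge \frac{k\ell_0 T - 1}{t}\|\eta\|^2$$
valid for every smooth $(0,1)$-form $\eta$ with values in $L^k$. Then, for $\bar\partial$-closed $\xi$, the standard duality argument applies: on the range of $\bar\partial^*$ restricted to $(\ker\bar\partial^*)^\perp$ one defines the linear functional $\bar\partial^*\eta \mapsto \langle \eta,\xi\rangle$, which by Cauchy-Schwarz combined with the above estimate is continuous with norm at most $(t/(k\ell_0 T - 1))^{1/2}\,\|\xi\|$. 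The Riesz representation theorem then produces $\varsigma \in L^2(M,L^k)$ solving $\bar\partial\varsigma = \xi$ with
$$\int_M|\varsigma|_{h_t^k}^2\,\omega_t^n \le \frac{t}{k\ell_0 T - 1}\int_M|\xi|_{h_t^k\otimes\omega_t}^2\,\omega_t^n.$$

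Finally I would reduce the prefactor. Since $k\ell_0 T \ge 2$ we have $k\ell_0 T - 1 \ge k\ell_0 T/2$, and combined with $t \le T$ and $\ell_0 \ge 1$ this gives $t/(k\ell_0 T - 1) \le 2/(k\ell_0) \le 2/k$, which is exactly the claimed constant. There is no real obstacle here: the positivity of $\Theta_{h_t^k} + \Ric(\omega_t)$ in the specified time window was the one nontrivial analytic input, and it is already packaged in the preceding lemma; the remaining steps are routine Hilbert-space manipulations on a compact smooth K\"ahler manifold, together with a quick bookkeeping of constants.
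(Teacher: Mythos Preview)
Your proposal is correct and essentially matches the paper's own argument: both use the preceding lemma to get the lower bound $\Delta_{\bar\partial}\ge (k\ell_0 T-1)/t$ on $(0,1)$-forms with values in $L^k$, then invoke standard functional analysis on the compact K\"ahler manifold $(M,\omega_t)$ to solve $\bar\partial\varsigma=\xi$ with the reciprocal $L^2$ bound, and finally simplify the constant via $k\ell_0 T\ge 2$ and $t<T$. The only cosmetic difference is that the paper phrases the middle step as ``classical Hodge theory'' (inverting $\Delta_{\bar\partial}$) whereas you phrase it as the H\"ormander duality/Riesz argument; on a compact manifold these are equivalent formulations of the same estimate.
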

\begin{proof}
By above corollary, the Hodge Laplacian $\triangle_{\bar{\partial}}$ on $T^{1,0}M\otimes L^k$ is strictly positive, in fact $\triangle_{\bar{\partial}}\ge\frac{k\ell_0 T-1}{t}\ge\frac{k}{2}$, when $k\ge\frac{2}{\ell_0T}$. This in turn implies the first positive eigenvalue of $\triangle_{\bar{\partial}}$ on $L$ is bigger than $\frac{k}{2}$. The solvability of $\bar{\partial}\varsigma=\xi$ and the $L^2$ estimate of the solution are easy from classical Hodge theory.
\end{proof}



\subsection{$L^\infty$ estimate to holomorphic sections}

Recall that the curvature of $h_t$
$$\Theta_{h_t}=\ell_0\frac{T}{t}\omega_t-\ell_0\frac{T-t}{t}\omega_0\le\ell_0\frac{T}{t}\omega_t.$$
So, by the Bochner formula (\ref{Bochner formula: 01}) we have
\begin{equation}
\triangle_{\omega_t}|\varsigma|_{h_t^k}^2\ge |\nabla\varsigma|_{h_t^k\omega_t}^2-k\ell_0\frac{T}{t}|\varsigma|_{h_t^k}^2,\,\forall\varsigma\in H^0(M;L^k).
\end{equation}

Also recall that we have the following well-known Sobolev inequality: for any $R>0$, there is $C(R)$ independent of $t$ such that
\begin{equation}\nonumber
\bigg(\int_{B_{\omega_t}(x_0,R)}f^{\frac{2n}{n-2}}\omega_t^n\bigg)^{\frac{n-1}{n}}\le C(R)\int_{B_{\omega_t}(x_0,R)}\big(f^2+|\nabla f|^2_{\omega_t}\big)\omega_t^n,
\end{equation}
for all $f\in C_0^1\big(B_{\omega_t}(x_0,R)\big)$.

By a standard iteration argument we have

\begin{lemm}\label{infinity estimate}
For any $R>0$, there exists $C(R)$ independent of $t$ and $k\ge 1$ such that for any $\frac{T}{2}\le t<T$ and $B_{\omega_t}(x,2r)\subset B_{\omega_t}(x_0,R)$, if $\varsigma\in H^0(B_{\omega_t}(x,2r);L^k)$, then
\begin{equation}
\sup_{B_{\omega_t}(x,r)}|\varsigma|_{h_t^k}^2\le C(R)\cdot r^{-2n}\cdot k^n\int_{B_{\omega_t}(x,2r)}|\varsigma|_{h_t^k}^2\omega_t^n.
\end{equation}
\end{lemm}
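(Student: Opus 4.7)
My plan is to establish this as a standard sub-mean-value inequality via Moser iteration, using two ingredients: a Bochner-type sub-solution inequality for $u := |\varsigma|_{h_t^k}^2$, and the uniform Sobolev inequality on $B_{\omega_t}(x_0, R)$ stated just above the lemma.

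First I would combine the Bochner formula (\ref{Bochner formula: 01}) with the curvature bound $\Theta_{h_t^k} = k\Theta_{h_t} \le k\ell_0(T/t)\omega_t$. Since $t \ge T/2$, this gives $\tr_{\omega_t}\Theta_{h_t^k} \le C_0 k$ with $C_0 = C_0(n,\ell_0,T)$ independent of $t$ and $k$, hence $\triangle_{\omega_t} u \ge -C_0 k\, u$. In particular $u$ is a non-negative sub-solution of an operator of the form $\triangle + C_0 k$, and all the $k$-dependence in the final estimate will enter through this potential term.

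Next, for each exponent $p \ge 1$ and a smooth cutoff $\phi$, I would test the sub-solution inequality against $p u^{p-1}\phi^2$, integrate by parts, and apply Young's inequality to the resulting cross term to produce the Caccioppoli estimate
\[
\int |\nabla u^{p/2}|^2 \phi^2 \le Cpk \int u^p \phi^2 + C \int u^p |\nabla\phi|^2.
\]
Combined with the Sobolev inequality on $B_{\omega_t}(x_0, R)$ (exponent $\chi := n/(n-1)$ in real dimension $2n$, Sobolev constant $C(R)$), this yields the one-step bound
\[
\|u\|_{L^{\chi p}(B(x,r_{j+1}))}^{p} \le C(R)\bigl(pk + \|\nabla\phi\|_\infty^2\bigr) \int_{B(x,r_j)} u^p.
\]
I then iterate with $p_j = \chi^j$, radii $r_j = r(1 + 2^{-j})$ decreasing from $2r$ to $r$, and cutoffs $\phi_j$ supported in $B(x,r_j)$, equal to $1$ on $B(x,r_{j+1})$, with $|\nabla\phi_j| \le C\,2^j/r$. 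Because $\sum_{j\ge 0} 1/p_j = \chi/(\chi-1) = n$, the product of the per-step factors $\bigl[C(R)(p_j k + 4^j/r^2)\bigr]^{1/p_j}$ collapses to at most $C(R)(k + r^{-2})^n$, which for $r \le R$ is $\le C(R)\, k^n r^{-2n}$, giving the stated inequality.

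The only mildly delicate point, and the main potential obstacle, is the starting step $p_0 = 1$, where the Caccioppoli derivation above is singular because the good gradient term on the right-hand side carries a $(p-1)$ factor. I would handle this by one of the standard devices: either run the iteration from $p_0 = 1 + \delta$ and send $\delta \to 0$; or first deduce $\sup_{B(x,r)} u \lesssim (kr^{-2})^{n/2} \|u\|_{L^2(B(x,2r))}$ by iterating from $p_0 = 2$ and then swallow the $L^2$ term via $\|u\|_{L^2}^2 \le \|u\|_{L^\infty}\|u\|_{L^1}$ through Moser's standard absorption trick on nested balls. With either fix in place, the rest of the argument is routine; the real bookkeeping is to verify that the exponent $n$ on $k$ comes out exactly (rather than $n\pm\varepsilon$), which is guaranteed by the identity $\sum_{j\ge 0} 1/p_j = n$.
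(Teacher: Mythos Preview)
Your proposal is correct and matches the paper's approach exactly: the paper simply writes ``By a standard iteration argument we have'' after recording the subsolution inequality $\triangle_{\omega_t}|\varsigma|^2 \ge |\nabla\varsigma|^2 - k\ell_0\tfrac{T}{t}|\varsigma|^2$ and the uniform Sobolev inequality on $B_{\omega_t}(x_0,R)$, and your outline is precisely that standard Moser iteration (including the correct bookkeeping $\sum_j 1/p_j = n$ that produces the $k^n r^{-2n}$ factor). Your care about the $p_0=1$ starting step goes beyond what the paper spells out, but both fixes you propose are standard and valid.
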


Recall the Cheeger-Gromov convergence
\begin{equation}
(M,\omega_{t},x_0)\stackrel{d_{GH}}{\longrightarrow}(M_T,d_T,x_T).
\end{equation}
Define the Hermitian line bundle $(L_T,h_T)$ on the regular set $\mathcal{R}\subset M_T$ by
$$L_T=\ell_0(L'+TK_{\mathcal{R}}),\,h_T=h_{L'}^{\ell_0}\otimes\bar{\omega}_T^{-n\ell_0T}.$$
Under the isometry $\overline{\id}:\overline{(M_{\reg},d_{\omega_T})}\rightarrow(M_T,d_T)$ constructed in Subsection 3.2, the Hermitian line bundle is isometric to $(L,h_{L'}^{\ell_0}\otimes\omega_T^{-n\ell_0T})$ on $M_{\reg}$.
Furthermore, the Hermitian line bundles $(L,h_{t})$ converges smoothly to $(L_T,h_T)$ on $\mathcal{R}$.

\begin{coro}
Let $R>0$ be any constant, $t_i\rightarrow T$ be any subsequence of times and $\varsigma_i$ be a sequence of holomorphic sections of $L^k$, $k\ge 1$, satisfying
\begin{equation}
\int_M|\varsigma_i|_{h_{t_i}^k}^2\omega_{t_i}^n\le 1.
\end{equation}
Then, passing to a subsequence if necessary, $\varsigma_i$ converges to a locally bounded holomorphic section $\varsigma_\infty$ of $L_T^k$ over $\mathcal{R}$ which satisfies
\begin{equation}
\sup_{B_{d_T}(x,r)\cap\mathcal{R}}|\varsigma_\infty|_{h_T^k}^2\le C(R)\cdot r^{-2n}\cdot k^n\int_{B_{d_T}(x,2r)\cap\mathcal{R}}|\varsigma_\infty|_{h_T^k}^2\bar{\omega}_T^n.
\end{equation}
whenever $B_{d_T}(x,r)\subset B_{d_T}(x_T,R)$.
\end{coro}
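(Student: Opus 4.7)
The approach is standard: local $L^\infty$ bounds from Lemma \ref{infinity estimate}, combined with the smooth convergence of $(L,h_{t_i},\omega_{t_i})$ to $(L_T,h_T,\bar{\omega}_T)$ on $\mathcal{R}$ established at the end of Subsection 3.2, give $C^{\infty}_{\mathrm{loc}}$ precompactness of $\{\varsigma_i\}$ on $\mathcal{R}$, and the mean-value inequality is obtained by passing to the limit.

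In detail, fix an exhaustion $K_1 \Subset K_2 \Subset \cdots$ of $\mathcal{R} \cap B_{d_T}(x_T,R)$ by compact subsets, identified with compacts in $M_{\reg}$ via the isometry $\overline{\id}$ of Proposition \ref{Cheeger-Gromov convergence: prop 2}. Since the identity represents the Gromov--Hausdorff convergence, for each $j$ there is $r_j>0$ such that for all large $i$ and every $x \in K_j$ the ball $B_{\omega_{t_i}}(x,2r_j)$ lies in $B_{\omega_{t_i}}(x_0,R+1)$. Applying Lemma \ref{infinity estimate} and using $\int_M |\varsigma_i|^2_{h_{t_i}^k} \omega_{t_i}^n \le 1$ yields a uniform bound $\sup_{K_j}|\varsigma_i|^2_{h_{t_i}^k} \le C(R,j,k)$. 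Since $\bar{\partial}\varsigma_i=0$, iterating the Bochner identity (\ref{Bochner formula: 02}) on $\mathcal{R}$ with smoothly converging Hermitian and K\"ahler data promotes this to uniform $C^\ell(K_j)$ bounds for every $\ell \ge 0$; a diagonal Arzel\`a--Ascoli extraction produces a subsequence converging in $C^{\infty}_{\mathrm{loc}}(\mathcal{R})$ to a locally bounded holomorphic section $\varsigma_\infty \in H^0(\mathcal{R};L_T^k)$.

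For the mean-value inequality, fix $x \in \mathcal{R}$ and $r>0$ with $B_{d_T}(x,r) \subset B_{d_T}(x_T,R)$, and pick $x_i \in M_{\reg}$ with $x_i \to x$. Apply Lemma \ref{infinity estimate} to $\varsigma_i$ on the pair $B_{\omega_{t_i}}(x_i,r) \subset B_{\omega_{t_i}}(x_i,2r)$ and pass $i \to \infty$. The left-hand side tends from below to $\sup_{B_{d_T}(x,r)\cap\mathcal{R}}|\varsigma_\infty|^2_{h_T^k}$, since every compact subset of $B_{d_T}(x,r)\cap\mathcal{R}$ lies in $B_{\omega_{t_i}}(x_i,r)$ for large $i$ by the Gromov--Hausdorff approximation. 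The right-hand side is controlled by $C(R)r^{-2n}k^n \int_{B_{d_T}(x,2r)\cap\mathcal{R}}|\varsigma_\infty|^2_{h_T^k}\bar{\omega}_T^n$ upon splitting $B_{\omega_{t_i}}(x_i,2r)$ into the part mapped to $B_{d_T}(x,2r)\setminus N_\epsilon(\mathcal{S})$, on which smooth convergence and dominated convergence apply directly, and the part mapped into an $\epsilon$-neighborhood of $\mathcal{S}$, whose contribution vanishes as $\epsilon \to 0$ thanks to the Hausdorff codimension $\ge 2$ of $\mathcal{S}$, the uniform comparability $\omega_{t_i}^n \asymp \Omega$, and the pointwise bound $|\varsigma_i|^2_{h_{t_i}^k} \le C_\epsilon$ at fixed distance $\epsilon$ from $\mathcal{S}$ obtained by a further application of Lemma \ref{infinity estimate}.

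The main obstacle is exactly this last reconciliation---verifying that the right-hand integral over the extrinsic balls in $M$ does not hide mass along $\mathcal{S}$ in the limit---for which the key ingredients are the codimension bound on $\mathcal{S}$ (Proposition \ref{Cheeger-Gromov convergence: prop 1} and the claim therein) together with the uniform $L^2$ bound on $\varsigma_i$. All other ingredients (Bochner iteration, the Moser-type argument inside Lemma \ref{infinity estimate}, Arzel\`a--Ascoli, and the $C^{1,\alpha}$ convergence on $\mathcal{R}$) are routine.
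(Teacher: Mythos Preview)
Your argument is correct and is exactly the approach the paper has in mind; the paper states this corollary without proof, as an immediate consequence of Lemma \ref{infinity estimate} together with the smooth convergence of $(L,h_{t_i},\omega_{t_i})$ to $(L_T,h_T,\bar{\omega}_T)$ on $\mathcal{R}$. Your write-up simply supplies the details the authors omit.

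Two minor remarks on the execution. First, there is no need to restrict to $x\in\mathcal{R}$: for any $x\in M_T$ one chooses $x_i\in M$ with $x_i\stackrel{d_{GH}}{\longrightarrow}x$ and applies Lemma \ref{infinity estimate} exactly as you do. Second, in your ``mass-hiding'' step the phrasing ``$|\varsigma_i|^2\le C_\epsilon$ at fixed distance $\epsilon$ from $\mathcal{S}$'' understates what you actually have and need: applying Lemma \ref{infinity estimate} once more with radii $2r,4r$ gives a bound $\sup_{B_{\omega_{t_i}}(x_i,2r)}|\varsigma_i|^2\le C(R')\,(2r)^{-2n}k^n$ that is uniform on the \emph{entire} ball, independent of the distance to $\mathcal{S}$. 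Combined with Colding's volume convergence and $\mathcal{H}^{2n}(\mathcal{S})=0$, this immediately kills the contribution of the portion of $B_{\omega_{t_i}}(x_i,2r)$ that is Gromov--Hausdorff close to $N_\epsilon(\mathcal{S})$, so $\int_{B_{\omega_{t_i}}(x_i,2r)}|\varsigma_i|^2\omega_{t_i}^n\to\int_{B_{d_T}(x,2r)\cap\mathcal{R}}|\varsigma_\infty|^2\bar{\omega}_T^n$ as desired. The appeal to $\omega_{t_i}^n\asymp\Omega$ is not needed here.
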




\subsection{Gradient estimate to holomorphic sections: a prior $L^\infty$ finiteness}

Next we derive the gradient estimate to holomorphic sections of $L^k$. Recall that by the Bochner formula (\ref{Bochner formula: 02}) we get, for any holomorphic section $\varsigma\in H^0(M;L^k)$ and time $t<T$,
\begin{eqnarray}
\triangle_{\omega_t}|\nabla\varsigma|_{h_t^k\otimes\omega_t}^2\ge && |\bar{\nabla}^{h_t}\nabla^{h_t}\varsigma|_{h_t^k\otimes\omega_t}^2
+|\nabla^{h_t}\nabla^{h_t}\varsigma|_{h_t^k\otimes\omega_t}^2-
\frac{(n+2)k\ell_0 T+1}{t}|\nabla\varsigma|_{h_t^k\otimes\omega_t}^2 \nonumber\\
  && +k\ell_0(T-t)\re\big(\nabla_is\langle\varsigma,\nabla_{\bar{i}}\bar{\varsigma}\rangle_{h_t^k\otimes\omega_t}
  \big)\label{Bochner formula: 03}
\end{eqnarray}
where $s$ is the scalar curvature of $\omega_t$, all terms are calculated with respect to $\omega_t$ and $h_t^k$. To verify this, just notice that $\Ric=\frac{1}{t}(\omega_0-\omega_t)$ and
$$-\nabla_j\Theta_{i\bar{j}}=k\ell_0(T-t)\nabla_jR_{i\bar{j}}=k\ell_0(T-t)\nabla_is.$$
In order to derive a uniform gradient estimate from the formula (\ref{Bochner formula: 2}) we need a Type I estimate to the scalar curvature under the continuity equation. Motivated by the K\"ahler-Ricci flow this should be true in general. However, as for K\"ahler-Ricci flow, it is difficult to show the Type I property right now.

Another approach is to consider the gradient estimate to the limit sections of $L_T^k$ on $\mathcal{R}$. However, a prior $L^\infty$ bound of holomorphic sections will be necessary for the limit process. To do this from now on in this subsection we introduce a new Hermitian metric on $L$, namely,
\begin{equation}
h_{FS}=h_{L'}^{\ell_0}\otimes\Omega^{-\ell_0T}
\end{equation}
where $h_{L'}$ is the Hermitian metric on $L'$ whose curvature $\Theta_{h_{L'}}=\omega_0$, $\Omega$ is the volume form on $M$ whose curvature $\Theta_{\Omega}=\frac{1}{T}(\omega_0-\eta_T)$. The metric $h_{FS}$ has curvature
\begin{equation}
\Theta_{h_{FS}}=\ell_0\eta_T,
\end{equation}
where $\eta_T$ is the induced Fubini-Study metric which satisfies $\eta_T\le C\omega_t$ for some $C$ independent of $t$; see Section 2. An easy calculation shows that for any $0<t<T$,
\begin{equation}
h_t=e^{-\ell_0\frac{T}{t}u_t}h_{FS},
\end{equation}
so $h_{FS}$ is uniformly equivalent to $h_t$ for any $\frac{T}{2}\le t<T$.

In the following computation we denote $\nabla\varsigma=\nabla^{h_{FS}^k}\varsigma$, $\bar{\nabla}\nabla\varsigma=\bar{\nabla}^{h_{FS}^k}\nabla^{h_{FS}^k}\varsigma$ and $|\nabla\varsigma|=|\nabla^{h_{FS}^k}\varsigma|_{h_{FS}^k\otimes\omega_t}$, etc., for any $\varsigma\in H^0(M;L^k)$, $k\ge 1$.

\begin{lemm}
For any $\frac{T}{2}\le t<T$ and $\varsigma\in H^0(M,L^k)$, $k\ge 1$, we have
\begin{equation}\label{Bochner formula: 1}
\triangle|\varsigma|^2\ge|\nabla\varsigma|^2-Ck|\varsigma|^2,
\end{equation}
and
\begin{equation}\label{Bochner formula: 2}
\triangle|\nabla\varsigma|^2
\ge|\bar{\nabla}\nabla\varsigma|^2+|\nabla\nabla\varsigma|^2
-k\ell_0\nabla_j(\eta_T)_{i\bar{j}}\langle\varsigma,\nabla_{\bar{i}}\bar{\varsigma}\rangle
-k\ell_0\nabla_{\bar{j}}(\tr_{\omega}\eta_T)\langle\nabla_j\varsigma,\bar{\varsigma}\rangle
-Ck|\nabla\varsigma|^2,
\end{equation}
where $C$ does not depend on $t$, $k$ and $\varsigma$.
\end{lemm}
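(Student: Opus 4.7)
The plan is to apply the general Bochner formulas (\ref{Bochner formula: 01}) and (\ref{Bochner formula: 02}) to the Hermitian line bundle $(L^k, h_{FS}^k)$ and then absorb the algebraic (non-derivative) curvature terms using the uniform geometric bounds established in Section~2. The whole argument is essentially book-keeping once one notes the key input
\begin{equation*}
\Theta_{h_{FS}^k} \;=\; k\ell_0\,\eta_T,
\end{equation*}
so that every occurrence of $\Theta$ in the general Bochner identities is controlled by $\eta_T$.

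For the first inequality, formula (\ref{Bochner formula: 01}) with $\Theta = k\ell_0\eta_T$ gives
\begin{equation*}
\triangle_{\omega_t}|\varsigma|^2 \;=\; |\nabla\varsigma|^2 \;-\; k\ell_0\,(\tr_{\omega_t}\eta_T)\,|\varsigma|^2,
\end{equation*}
and Proposition~\ref{metric comparison: prop} furnishes a constant $C$ independent of $t\in[T/2,T)$ with $\tr_{\omega_t}\eta_T\le C$. This immediately yields (\ref{Bochner formula: 1}).

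For the second inequality I would substitute $\Theta = k\ell_0\eta_T$ into (\ref{Bochner formula: 02}). The two derivative-in-curvature terms reproduce exactly the displayed expressions
\begin{equation*}
-k\ell_0\,\nabla_j(\eta_T)_{i\bar j}\langle\varsigma,\nabla_{\bar i}\bar\varsigma\rangle
\;-\; k\ell_0\,\nabla_{\bar j}(\tr_{\omega_t}\eta_T)\langle\nabla_j\varsigma,\bar\varsigma\rangle,
\end{equation*}
and the remaining three terms are algebraic in $\Theta$ and in $\Ric$. These I would bound one by one: the Ricci term $R_{i\bar j}\langle\nabla_j\varsigma,\nabla_{\bar i}\bar\varsigma\rangle \ge -\tfrac{1}{t}|\nabla\varsigma|^2$ by $\Ric(\omega_t)=\tfrac{1}{t}(\omega_0-\omega_t)\ge -\tfrac{1}{t}\omega_t$; the term $-2\Theta_{i\bar j}\langle\nabla_j\varsigma,\nabla_{\bar i}\bar\varsigma\rangle \ge -2k\ell_0\,\|\eta_T\|_{\omega_t}\,|\nabla\varsigma|^2$ using again $\eta_T\le C\omega_t$; and similarly $-(\tr_{\omega_t}\Theta)|\nabla\varsigma|^2\ge -Ck\ell_0|\nabla\varsigma|^2$. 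All the $t$-dependence is uniform on $[T/2,T)$, so the three contributions combine into a single $-Ck|\nabla\varsigma|^2$ with $C$ independent of $t$, $k$, $\varsigma$, producing (\ref{Bochner formula: 2}).

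There is no real obstacle here: the only place one could worry is the \emph{lower} bound on the Ricci contraction $R_{i\bar j}\langle\nabla_j\varsigma,\nabla_{\bar i}\bar\varsigma\rangle$, since the continuity equation only gives a one-sided Ricci bound, but that one-sided bound is exactly what is needed. The derivative-of-curvature terms $\nabla_j(\eta_T)_{i\bar j}$ and $\nabla_{\bar j}(\tr_{\omega_t}\eta_T)$ are intentionally kept explicit on the right-hand side because $\eta_T$ is only a pullback Fubini--Study form (degenerate along $M_{\sing}$) and its first derivatives are not a priori controlled; these terms will be dealt with later in the paper, presumably by integration by parts against $\bar\partial$ of $\varsigma$ or by using the gradient bound on the potential established in the preceding section.
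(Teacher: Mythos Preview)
Your proposal is correct and follows essentially the same approach as the paper: substitute $\Theta=k\ell_0\eta_T$ into the general Bochner identities (\ref{Bochner formula: 01}) and (\ref{Bochner formula: 02}), keep the two derivative-of-$\eta_T$ terms explicit, and absorb the remaining algebraic terms using $\eta_T\le C\omega_t$ together with the Ricci lower bound $\Ric(\omega_t)\ge -\tfrac{1}{t}\omega_t$. The paper's own proof is terser but records exactly these ingredients; your commentary on why the $\nabla\eta_T$ terms are left intact (to be handled later by integration by parts) is also accurate.
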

\begin{proof}
(\ref{Bochner formula: 1}) is a direct consequence of the Bochner formula (\ref{Bochner formula: 01}). (\ref{Bochner formula: 2}) follows from the Bochner formula (\ref{Bochner formula: 02})
\begin{eqnarray}
\triangle|\nabla\varsigma|^2&=&|\bar{\nabla}\nabla\varsigma|^2+|\nabla\nabla\varsigma|^2
-k\ell_0\nabla_j(\eta_T)_{i\bar{j}}\langle\varsigma,\nabla_{\bar{i}}\bar{\varsigma}\rangle
-k\ell_0\nabla_{\bar{j}}(\tr_{\omega}\eta_T)\langle\nabla_j\varsigma,\bar{\varsigma}\rangle\nonumber\\
&&\hspace{1cm}+R_{i\bar{j}}\langle\nabla_j\varsigma,\nabla_{\bar{i}}\bar{\varsigma}\rangle
-2k\ell_0(\eta_T)_{i\bar{j}}\langle\nabla_j\varsigma,\nabla_{\bar{i}}\bar{\varsigma}\rangle
-k\ell_0|\nabla\varsigma|^2\cdot\tr_{\omega}\eta_T\nonumber.
\end{eqnarray}
and $\Ric(\omega_t)=\frac{1}{t}(\omega_0-\omega_t)\ge-\frac{1}{t}\omega_t$.
\end{proof}

\begin{prop}\label{gradient estimate: prop 1}
For any $R>0$, there exists $C(R)$ independent of $t$ and $k\ge 1$ such that for any $\frac{T}{2}\le t<T$ and $B_{\omega_t}(x,2r)\subset B_{\omega_t}(x_0,R)$, if $\varsigma\in H^0(B_{\omega_t}(x,2r);L^k)$, then
\begin{equation}
\sup_{B_{\omega_t}(x,r)}|\varsigma|_{h_{FS}^k}^2\le C(R)\cdot r^{-2n}\cdot k^n\int_{B_{\omega_t}(x,2r)}|\varsigma|_{h_{FS}^k}^2\omega_t^n.
\end{equation}
\begin{equation}\label{gradient estimate: 2}
\sup_{B_{\omega_t}(x,r)}|\nabla^{h_{FS}}\varsigma|^2_{h_{FS}^k\otimes\omega_t}\le C(R)\cdot r^{-2n-2}\cdot k^{n+1}\int_{B_{\omega_t}(x,2r)}|\varsigma|_{h_{FS}^k}^2\omega_t^n.
\end{equation}
\end{prop}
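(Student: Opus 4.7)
The $L^\infty$ estimate in part~(1) is obtained by standard Moser iteration from~(\ref{Bochner formula: 1}) and the uniform Sobolev inequality already exploited in Lemma~\ref{infinity estimate}; since $h_{FS}$ and $h_t$ are uniformly equivalent on $[\tfrac{T}{2},T)$, no new ingredient is required and the argument is essentially a verbatim repetition.

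For the gradient bound in part~(2), the plan is to run Moser iteration on the auxiliary function $F=|\nabla\varsigma|^{2}+Ak\,|\varsigma|^{2}$ with $A$ a sufficiently large constant. Starting from the Bochner inequality~(\ref{Bochner formula: 2}), the key structural simplification is that $\eta_T$ is a closed $(1,1)$-form, whence in $\omega_t$-normal coordinates $\nabla_{j}(\eta_T)_{i\bar j}=\nabla_{i}\tr_\omega\eta_T$, and the two cross terms in~(\ref{Bochner formula: 2}) collapse into the single expression
$$-2k\ell_{0}\,\re\bigl(\nabla_{i}\tr_{\omega}\eta_T\cdot\langle\varsigma,\nabla_{\bar i}\bar\varsigma\rangle\bigr).$$
Absorbing this by Cauchy--Schwarz requires a uniform bound $|\nabla\tr_\omega\eta_T|\le C$, which I would extract from the Yau--Schwarz inequality
$$\triangle\tr_\omega\eta_T\ge -C_{1}+C_{2}^{-1}|\nabla\tr_\omega\eta_T|^{2}$$
established in the proof of Proposition~\ref{metric comparison: prop}, combined with the $C^{0}$ bound $\tr_\omega\eta_T\le C$, via a Cheng--Yau-type maximum principle applied to a combination such as $|\nabla\tr_\omega\eta_T|^{2}+B(\tr_\omega\eta_T)^{2}$; alternatively one can trade the pointwise control for an integrated one, inserting the Schwarz inequality directly into the Moser scheme and integrating by parts against the cutoff to absorb $|\nabla\tr_\omega\eta_T|^{2}$ into the other positive terms.

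Once that control is in place, Cauchy--Schwarz together with~(\ref{Bochner formula: 1}) yields the pointwise inequality $\triangle|\nabla\varsigma|^{2}\ge -Ck|\nabla\varsigma|^{2}-Ck^{2}|\varsigma|^{2}$, and a large enough choice of $A$ gives $\triangle F\ge -CkF$. Standard Moser iteration, based on the uniform Sobolev inequality, then produces
$$\sup_{B_{\omega_t}(x,r)}F\le C(R)\,r^{-2n}\,k^{n}\fint_{B_{\omega_t}(x,2r)}F\,\omega_{t}^{n}.$$
To convert the $\int|\nabla\varsigma|^{2}$ contribution to $\int F$ into a bound in terms of $\int|\varsigma|^{2}$, integrate~(\ref{Bochner formula: 1}) against a cutoff supported in $B_{\omega_t}(x,3r)$; this gives the Caccioppoli-type inequality $\int_{B_{2r}}|\nabla\varsigma|^{2}\le C(k+r^{-2})\int_{B_{3r}}|\varsigma|^{2}$, and feeding it back yields the claimed estimate with factor $k^{n+1}r^{-2n-2}$. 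The principal technical hurdle is the uniform pointwise control of $|\nabla\tr_\omega\eta_T|$ (or the integrated analogue sharp enough to survive the iteration); everything downstream of it is routine elliptic analysis.
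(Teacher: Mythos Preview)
Your plan hinges on a uniform pointwise bound $|\nabla\tr_{\omega_t}\eta_T|\le C$, and this is where the argument breaks. A Cheng--Yau estimate on a combination like $|\nabla u|^2+Bu^2$ with $u=\tr_\omega\eta_T$ requires computing $\triangle|\nabla u|^2$ via Bochner, which brings in the Ricci curvature of $\omega_t$; you only have a \emph{lower} Ricci bound, not an upper one, and the Schwarz inequality is a one-sided inequality for $\triangle u$, so you cannot differentiate it to control $\nabla\triangle u$. Nothing in the setup gives a $t$-uniform pointwise gradient bound for $\tr_\omega\eta_T$. Your fallback (``insert the Schwarz inequality into the Moser scheme and integrate by parts'') is too vague: the weight in the iteration is $\rho^2|\nabla\varsigma|^{2(p-1)}$, not $\rho^2$, and integrating the Schwarz inequality against that weight reproduces exactly the uncontrolled terms you are trying to eliminate.

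The paper avoids any control of $\nabla\eta_T$ altogether. Instead of estimating $\nabla_j(\eta_T)_{i\bar j}$, it integrates by parts to throw the derivative onto the section factors:
\[
\int k\ell_0\rho^2|\nabla\varsigma|^{2(p-1)}\nabla_j(\eta_T)_{i\bar j}\langle\varsigma,\nabla_{\bar i}\bar\varsigma\rangle
=-k\ell_0\int(\eta_T)_{i\bar j}\,\nabla_j\bigl(\rho^2|\nabla\varsigma|^{2(p-1)}\langle\varsigma,\nabla_{\bar i}\bar\varsigma\rangle\bigr),
\]
and then uses only the zeroth-order bound $|\eta_T|_{\omega_t}\le C$. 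The resulting terms contain $\nabla\nabla\varsigma$ and $\bar\nabla\nabla\varsigma$, which are absorbed by the positive Hessian terms $|\nabla\nabla\varsigma|^2+|\bar\nabla\nabla\varsigma|^2$ retained from the Bochner formula (\ref{Bochner formula: 2}); those terms must \emph{not} be discarded before the iteration, contrary to what your pointwise inequality $\triangle F\ge -CkF$ would do. The price is that the iteration inequality acquires a mixed term $k\,|\varsigma|^2|\nabla\varsigma|^{2(p-1)}$, which is why the paper runs a three-case iteration comparing $\bigl(\int|\nabla\varsigma|^{2p_j}\bigr)^{1/p_j}$ with $k\bigl(\int|\varsigma|^{2p_j}\bigr)^{1/p_j}$, together with the auxiliary $L^2$ Hessian bound in the subsequent lemma. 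Your scheme with $F=|\nabla\varsigma|^2+Ak|\varsigma|^2$ would be a genuine simplification of that case analysis \emph{if} the pointwise subsolution inequality were available, but it is not.
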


\begin{proof}
The proof uses Nash-Moser iteration. We only prove the gradient estimate, the $L^\infty$ estimate is obvious by (\ref{Bochner formula: 1}). For simplicity we may assume
$$\int_{B_{\omega_t}(x,2r)}|\varsigma|^2\omega_t^n=1.$$
Then the $L^\infty$ estimate of $\varsigma$ shows
$$\sup_{B_{\omega_t}(x,\frac{7}{4}r)}|\varsigma|\le C(R)\cdot r^{-n}k^{\frac{n}{2}}.$$

Keep in mind that $\eta_T\le C\omega_t$ uniformly for any $\frac{T}{2}\le t<T$. Now, for any $p\ge\frac{n}{n-1}$ and cut-off $\rho\in C_0^\infty(B_{\omega_t}(x,2r))$ with $0\le\rho\le 1$, we have by (\ref{Bochner formula: 2})
\begin{eqnarray}
\int_M\rho^2\big|\nabla|\nabla\varsigma|^p\big|^2&=&\frac{p^2}{4(p-1)}
\int\rho^2\nabla_i|\nabla\varsigma|^{2(p-1)}\nabla_{\bar{i}}|\nabla\varsigma|^2\nonumber\\
&=&\frac{p^2}{4(p-1)}
\int\bigg(-\rho^2|\nabla\varsigma|^{2(p-1)}\triangle|\nabla\varsigma|^2
-2\rho|\nabla\varsigma|^{2(p-1)}\nabla_i\rho\nabla_{\bar{i}}|\nabla\varsigma|^2\bigg)\nonumber\\
&\le&\frac{p^2}{4(p-1)}
\int\bigg(-\rho^2|\nabla\varsigma|^{2(p-1)}\big(|\bar{\nabla}\nabla\varsigma|^2
+|\nabla\nabla\varsigma|^2\big)\nonumber\\
&&+k\ell_0\rho^2|\nabla\varsigma|^{2(p-1)}\big(\nabla_j(\eta_T)_{i\bar{j}}
\langle\varsigma,\nabla_{\bar{i}}\bar{\varsigma}\rangle
+\nabla_{\bar{j}}(\tr_{\omega}\eta_T)\langle\nabla_j\varsigma,\bar{\varsigma}\rangle\big)\nonumber\\
&&+Ck\rho^2|\nabla\varsigma|^{2p}
-2\rho|\nabla\varsigma|^{2(p-1)}\nabla_i\rho\nabla_{\bar{i}}|\nabla\varsigma|^2\bigg)\nonumber.
\end{eqnarray}
The term $\int\rho^2|\nabla\varsigma|^{2(p-1)}\nabla_j(\eta_T)_{i\bar{j}}\langle\varsigma,\nabla_{\bar{i}}\bar{\varsigma}\rangle$ can be estimated by integration by parts as follows
\begin{eqnarray}
&&\int k\ell_0\rho^2|\nabla\varsigma|^{2(p-1)}\nabla_j(\eta_T)_{i\bar{j}}
\langle\varsigma,\nabla_{\bar{i}}\bar{\varsigma}\rangle\nonumber\\
&=&-k\ell_0\int(\eta_T)_{i\bar{j}}\nabla_j\big(\rho^2|\nabla\varsigma|^{2(p-1)}
\langle\varsigma,\nabla_{\bar{i}}\bar{\varsigma}\rangle\big)\nonumber\\
&=&-k\ell_0\int(\eta_T)_{i\bar{j}}\bigg(\rho^2|\nabla\varsigma|^{2(p-1)}
\big(\langle\nabla_j\varsigma,\nabla_{\bar{i}}\bar{\varsigma}\rangle
+\langle\varsigma,\nabla_j\nabla_{\bar{i}}\bar{\varsigma}\rangle\big)\nonumber\\
&&+(p-1)\rho^2|\nabla\varsigma|^{2(p-2)}
\nabla_j|\nabla\varsigma|^2\langle\varsigma,\nabla_{\bar{i}}\bar{\varsigma}\rangle+2\rho|\nabla\varsigma|^{2(p-1)}
\nabla_j\rho\langle\varsigma,\nabla_{\bar{i}}\bar{\varsigma}\rangle\bigg)\nonumber\\
&\le&\frac{1}{8}\int\rho^2|\nabla\varsigma|^{2(p-1)}\big(|\bar{\nabla}\nabla\varsigma|^2+|\nabla\nabla\varsigma|^2\big)
+C(p-1)^2k^2\int\rho^2|\varsigma|^2|\nabla\varsigma|^{2(p-1)}\nonumber\\
&&\hspace{2cm}+Ck\int\bigg(|\nabla\rho|^2|\varsigma|^2
|\nabla\varsigma|^{2(p-1)}+\rho^2|\nabla\varsigma|^{2p}\bigg);\nonumber
\end{eqnarray}
similar estimate remains valid for $\int k\ell_0\rho^2|\nabla\varsigma|^{2(p-1)}\nabla_{\bar{j}}(\tr_{\omega}\eta_T)
\langle\nabla_j\varsigma,\bar{\varsigma}\rangle$; moreover,
\begin{eqnarray}
&&-2\int\rho|\nabla\varsigma|^{2(p-1)}\nabla_i\rho\nabla_{\bar{i}}|\nabla\varsigma|^2\nonumber\\
&\le&\frac{1}{4}\int\rho^2|\nabla\varsigma|^{2(p-1)}\big(|\bar{\nabla}\nabla\varsigma|^2
+|\nabla\nabla\varsigma|^2\big)+\int|\nabla\rho|^2|\nabla\varsigma|^{2p}.\nonumber
\end{eqnarray}
Summing up the estimates we get
\begin{eqnarray}
\int\rho^2\big|\nabla|\nabla\varsigma|^p\big|^2&\le& Cp^3k\int\bigg(\rho^2|\nabla\varsigma|^{2p}
+|\nabla\rho|^2|\varsigma|^2|\nabla\varsigma|^{2(p-1)}\nonumber\\
&&+|\nabla\rho|^2|\nabla\varsigma|^{2p}+k\rho^2|\varsigma|^2|\nabla\varsigma|^{2(p-1)}\bigg).\nonumber
\end{eqnarray}
Applying the Sobolev inequality we have
\begin{eqnarray}
\bigg(\int(\rho|\nabla\varsigma|^p)^{\frac{2n}{n-1}}\bigg)^{\frac{n-1}{n}}&\le& Cp^3k\int\bigg(\rho^2|\nabla\varsigma|^{2p}
+|\nabla\rho|^2|\varsigma|^2|\nabla\varsigma|^{2(p-1)}\nonumber\\
&&+|\nabla\rho|^2|\nabla\varsigma|^{2p}+k\rho^2|\varsigma|^2|\nabla\varsigma|^{2(p-1)}\bigg),\nonumber
\end{eqnarray}
where $C=C(R)$. We next consider three independent cases to run the iteration. Put $p_j=\nu^{j+1}$, $j\ge 0$, where $\nu=\frac{n}{n-1}$. Define a family of radius inductively by $r_0=\frac{3}{2}r$ and $r_j=r_{j-1}-2^{-j-1}r$, and a family of cut-offs $\rho_j\in C_0^\infty(B_{j})$ where $B_j=B_{\omega_t}(x_0,r_j)$ such that
$$0\le\rho_j\le 1,\,|\nabla\rho_j|\le2^{j+2}r^{-1}\mbox{ and }\rho_j=1\mbox{ on }B_{j+1}.$$
Above formula finally gives, by setting $\rho=\rho_j$,
\begin{equation}
\bigg(\int_{B_{j+1}}|\nabla\varsigma|^{2p_{j+1}}\bigg)^{\frac{n-1}{n}}\le Cp_j^32^{2j}k r^{-2}\int_{B_{j}}\big(|\nabla\varsigma|^{2p_j}
+k|\varsigma|^2|\nabla\varsigma|^{2(p_j-1)}\big),\,\forall j\ge 0.\label{sobolev}
\end{equation}

Case 1: $\big(\int_{B_{j}}|\nabla\varsigma|^{2p_j}\big)^{\frac{1}{p_j}}\ge k\big(\int_{B_{j}}
|\varsigma|^{2p_j}\big)^{\frac{1}{p_j}}$, for all $j\ge 0$. Then,
$$\int_{B_{j}}k|\varsigma|^2|\nabla\varsigma|^{2(p_j-1)}\le k\bigg(\int_{B_{j}}
|\nabla\varsigma|^{2p_j}\bigg)^{1-\frac{1}{p_j}}\bigg(\int_{B_{j}}
|\varsigma|^{2p_j}\bigg)^{\frac{1}{p_j}}\le\int_{B_{j}}|\nabla\varsigma|^{2p_j}.$$
Then (\ref{sobolev}) gives
$$\bigg(\int_{B_{j+1}}|\nabla\varsigma|^{2p_{j+1}}\bigg)^{\frac{1}{p_{j+1}}}\le (Ckr^{-2})^{\frac{1}{p_j}}p_j^{\frac{3}{p_j}}4^{\frac{j}{p_j}}
\bigg(\int_{B_{j}}|\nabla\varsigma|^{2p_j}\bigg)^{\frac{1}{p_j}},\,\forall j\ge 0.$$
By an easy iteration,
$$\big\||\nabla\varsigma|^2\big\|_{L^{p_j}(B_j)}\le(Ckr^{-2})^{\sum\nu^{-i}}\cdot\Pi p_i^{\frac{3}{p_i}}\cdot 4^{\sum i\nu^{-i}}\cdot\big\||\nabla\varsigma|^2\big\|_{L^{p_0}(B_0)},\,\forall j\ge 0.$$
In particular by letting $j\rightarrow\infty$ we obtain
\begin{eqnarray}
\sup_{B_{\omega_t}(x,r)} |\nabla\varsigma|^2&\le& (Ckr^{-2})^{n-1}\cdot C(n)\cdot\bigg(\int_{B_0}|\nabla\varsigma|^{2\nu}\bigg)^{\frac{1}{\nu}}\nonumber
\end{eqnarray}
Finally, a cut-off argument gives
$$\bigg(\int_{B_0}|\nabla\varsigma|^{2\nu}\bigg)^{\frac{1}{\nu}}\le C(R)\int_{B_{\omega_t}(x,\frac{7}{4}r)\backslash D}\big(|\bar{\nabla}\nabla\varsigma|^2+|\nabla\nabla\varsigma|^2+r^{-2}|\nabla\varsigma|^2\big)\le C(R)k^{-2}r^{-4}.$$
In the last inequality we used the lemma below. So we have
$$\sup_{B_{\omega_t}(x,r)}|\nabla\varsigma|^2\le C(R)(kr^{-2})^{n+1}.$$

Case 2: There exists $j_0\ge 0$ such that $\big(\int_{B_j}|\nabla\varsigma|^{2p_j}\big)^{\frac{1}{p_j}}\ge k\big(\int_{B_j}
|\varsigma|^{2p_j}\big)^{\frac{1}{p_j}}$ for all $j>j_0$ but
$$\big(\int_{B_{j_0}}|\nabla\varsigma|^{2p_{j_0}}\big)^{\frac{1}{p_{j_0}}}<k\big(\int_{B_{j_0}}
|\varsigma|^{2p_{j_0}}\big)^{\frac{1}{p_{j_0}}}.$$
Then,
$$\int_{B_{j_0}}k|\varsigma|^2|\nabla\varsigma|^{2(p_{j_0}-1)}\le k^{p_{j_0}}\int_{B_{j_0}}|\varsigma|^{2p_{j_0}}.$$
By a same iteration for $j\ge j_0+1$ as above we have,
\begin{eqnarray}
\sup_{B_{\omega_t}(x,r)\backslash D}|\nabla\varsigma|^2&\le& (Ckr^{-2})^{\frac{1}{p_{j_0+1}}\sum\nu^{-i}}\cdot C(n)\cdot\bigg(\int_{B_{j_0+1}}|\nabla\varsigma|^{2p_{j_0+1}}\bigg)^{\frac{1}{p_{j_0+1}}}\nonumber\\
&\le&(Ckr^{-2})^{\frac{1}{p_{j_0}}\sum\nu^{-i}}
\cdot\bigg(\int_{B_{j_0}}|\nabla\varsigma|^{2p_{j_0}}+k|\varsigma|^2|\nabla\varsigma|^{2(p_{j_0}-1)}
\bigg)^{\frac{1}{p_{j_0}}}\nonumber\\
&\le&(Ckr^{-2})^{\frac{n}{p_{j_0}}}\cdot
k\cdot\bigg(\int_{B_{j_0}}|\varsigma|^{2p_{j_0}}\bigg)^{\frac{1}{p_{j_0}}}\nonumber.
\end{eqnarray}
The supremum estimate of $|\nabla\varsigma|$ follows from
$$\bigg(\int_{B_{j_0}}|\varsigma|^{2p_{j_0}}\bigg)^{\frac{1}{p_{j_0}}}\le\big(\sup_{B_{j_0}}
|\varsigma|\big)^{\frac{2(p_{j_0}-1)}{p_{j_0}}}\bigg(\int_{B_{j_0}}|\varsigma|^{2}\bigg)^{\frac{1}{p_{j_0}}}
\le(C kr^{-2})^{n-\frac{n}{p_{j_0}}}.$$

Case 3: $\big(\int_{B_j}|\nabla\varsigma|^{2p_j}\big)^{\frac{1}{p_j}}\le k\big(\int_{B_j}
|\varsigma|^{2p_j}\big)^{\frac{1}{p_j}}$ for all $j\ge 0$. It is obvious that
$$\sup_{B_{\omega_t}(x,r)}|\nabla\varsigma|\le k\sup_{B_{\omega_t}(x,r)}|\varsigma|\le Ck^{n+1}r^{-2n}.$$

Summing up the three cases we conclude the gradient estimate on $B_{\omega_t}(x,r)$.
\end{proof}

\begin{lemm}
Assume as in above proposition. We have
\begin{equation}\label{Bochner formula: 4}
\int_{B_{\omega_t}(x,\frac{7}{4}r)}|\nabla\varsigma|^2\omega_t^n\le Ckr^{-2}\int_{B_{\omega_t}(x,2r)}|\varsigma|^2\omega_t^n,
\end{equation}
and
\begin{equation}
\int_{B_{\omega_t}(x,\frac{7}{4}r)}\big(|\bar{\nabla}\nabla\varsigma|^2+|\nabla\nabla\varsigma|^2\big)\omega_t^n\le Ck^2r^{-4}\int_{B_{\omega_t}(x,2r)}|\varsigma|_{h_t}^2\omega_t^n.
\end{equation}
where $C$ is a constant independent of $t$, $k$ and $\varsigma\in H^0(B_{\omega_t}(x,2r),L^k)$.
\end{lemm}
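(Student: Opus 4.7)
The plan is to derive both inequalities as Caccioppoli-type estimates by testing the Bochner formulas of Lemma~3.11 against a squared cut-off and integrating by parts; this is essentially the $p=1$ base case of the iteration performed in Proposition~\ref{gradient estimate: prop 1}. Fix once and for all $\rho\in C_0^\infty(B_{\omega_t}(x,2r))$ with $\rho\equiv 1$ on $B_{\omega_t}(x,\tfrac{7}{4}r)$, $0\le\rho\le 1$, and $|\nabla\rho|_{\omega_t}\le 8/r$.

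For the first inequality, I would multiply (\ref{Bochner formula: 1}) by $\rho^2$ and integrate on $M$. Integrating the Laplacian term by parts produces $-2\int\rho\,\nabla\rho\cdot\nabla|\varsigma|^2$, which by the pointwise bound $|\nabla|\varsigma|^2|\le 2|\varsigma|\,|\nabla\varsigma|$ and Cauchy-Schwarz with a small parameter absorbs $\tfrac{1}{2}\int\rho^2|\nabla\varsigma|^2$ back into the left-hand side. What remains is an estimate of the form
$$\int_{B_{\omega_t}(x,7r/4)}|\nabla\varsigma|^2\omega_t^n \le C(r^{-2}+k)\int_{B_{\omega_t}(x,2r)}|\varsigma|^2\omega_t^n.$$
Since $B_{\omega_t}(x,2r)\subset B_{\omega_t}(x_0,R)$ forces $r\le R$, and since $k\ge 1$, we have $r^{-2}+k\le(1+R^2)\,kr^{-2}$, giving the stated bound with $C=C(R)$.

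The second inequality is analogous but starts from (\ref{Bochner formula: 2}). Multiplying by $\rho^2$ and transferring the Laplacian onto $\rho^2$ produces $-2\int\rho\,\nabla\rho\cdot\nabla|\nabla\varsigma|^2$; after Cauchy-Schwarz this absorbs a small fraction of $\int\rho^2(|\bar\nabla\nabla\varsigma|^2+|\nabla\nabla\varsigma|^2)$ into the left-hand side and leaves a remainder of size $Cr^{-2}\int|\nabla\varsigma|^2$. The genuine obstacle is the pair of cross-terms
$$\int\rho^2 k\ell_0\,\nabla_j(\eta_T)_{i\bar j}\langle\varsigma,\nabla_{\bar i}\bar\varsigma\rangle\quad\text{and}\quad \int\rho^2 k\ell_0\,\nabla_{\bar j}(\tr_\omega\eta_T)\langle\nabla_j\varsigma,\bar\varsigma\rangle,$$
because $\eta_T$ admits only the pointwise bound $\eta_T\le C\omega_t$ from Proposition~\ref{metric comparison: prop} and no derivative control whatsoever. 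The remedy, exactly as in the proof of Proposition~\ref{gradient estimate: prop 1} with $p=1$, is to integrate by parts a second time, transferring the derivative off $\eta_T$ and onto $\rho^2\langle\varsigma,\nabla_{\bar i}\bar\varsigma\rangle$ (resp.\ the analogous factor). The resulting integrands pair $\eta_T$ pointwise with $|\varsigma|\,|\bar\nabla\nabla\varsigma|$, $|\varsigma|\,|\nabla\nabla\varsigma|$, $|\nabla\rho|\,|\varsigma|\,|\nabla\varsigma|$, and $|\nabla\varsigma|^2$, and each is controlled by Cauchy-Schwarz, with a small parameter in front of the Hessian terms so they too are absorbed.

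Putting these estimates together yields
$$\int_{B_{\omega_t}(x,7r/4)}(|\bar\nabla\nabla\varsigma|^2+|\nabla\nabla\varsigma|^2)\omega_t^n \le Ckr^{-2}\int_{B_{\omega_t}(x,2r)}|\nabla\varsigma|^2\omega_t^n + Ck^2\int_{B_{\omega_t}(x,2r)}|\varsigma|^2\omega_t^n.$$
Applying the first inequality to the middle term on a slightly enlarged intermediate radius (which only requires adjusting the cut-off radii) converts it into $Ck^2 r^{-4}\int|\varsigma|^2\omega_t^n$, producing the desired bound. The main difficulty throughout is keeping any derivative of $\eta_T$ out of the final estimate and ensuring each absorbed second-derivative term carries a coefficient small enough for the argument to close.
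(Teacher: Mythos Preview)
Your proposal is correct and follows essentially the same route as the paper: both inequalities are obtained by multiplying the Bochner formulas (\ref{Bochner formula: 1}) and (\ref{Bochner formula: 2}) by $\rho^2$, integrating by parts, and for the second inequality transferring the derivative off $\eta_T$ via a second integration by parts before absorbing the Hessian terms with Cauchy--Schwarz. The paper uses an intermediate radius $\tfrac{15}{8}r$ for the cut-off support and then feeds the first estimate back in on that ball, exactly as you indicate in your final paragraph.
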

\begin{proof}
Let $\rho\in C_0^\infty(B_{\omega_t}(x,\frac{15}{8}r))$ be any cut-off such that $0\le\rho\le 1$, $|\nabla\rho|\le 100r^{-2}$ and $\rho=1$ on $B_{\omega_t}(x,\frac{7}{4}r)$, then by (\ref{Bochner formula: 1}),
$$\int\rho^2|\nabla\varsigma|^2\le Ck\int\rho^2|\varsigma|^2+\int\rho^2\triangle|\varsigma|^2$$
where
$$\int\rho^2\triangle|\varsigma|^2=-2\int\rho\nabla_{\bar{i}}\rho\langle\nabla_i\varsigma,
\bar{\varsigma}\rangle
\le\frac{1}{2}\int\rho^2|\nabla\varsigma|^2+2\int|\nabla\varsigma|^2|\varsigma|^2.$$
Thus,
$$\int\rho^2|\nabla\varsigma|^2\le Ckr^{-2}\int_{B_{\omega_t}(x,\frac{15}{8}r)}|\varsigma|^2.$$
The first estimate follows. Then, by (\ref{Bochner formula: 2}),
\begin{eqnarray}
&&\int\rho^2\big(|\bar{\nabla}\nabla\varsigma|^2+|\nabla\nabla\varsigma|^2\big)\nonumber\\
&\le&\int\rho^2
\bigg(\triangle|\nabla\varsigma|^2+k\ell_0\nabla_j(\eta_T)_{i\bar{j}}\langle\varsigma,
\nabla_{\bar{i}}\bar{\varsigma}\rangle
+k\ell_0\nabla_{\bar{j}}(\tr_{\omega}\eta_T)\langle\nabla_j\varsigma,\bar{\varsigma}\rangle\bigg)
+Ck\int\rho^2|\nabla\varsigma|^2,\nonumber
\end{eqnarray}
where
$$\int\rho^2\triangle|\nabla\varsigma|^2=-2\int\rho\nabla_i\rho\nabla_{\bar{i}}|\nabla\varsigma|^2
\le\frac{1}{4}\int\rho^2\big(|\bar{\nabla}\nabla\varsigma|^2+|\nabla\nabla\varsigma|^2\big)
+8\int|\nabla\rho|^2|\nabla\varsigma|^2,$$
\begin{eqnarray}
\int k\ell_0\rho^2\nabla_j(\eta_T)_{i\bar{j}}\chi^2\langle\varsigma,\nabla_{\bar{i}}\bar{\varsigma}\rangle
&=&-\int k\ell_0\rho^2(\eta_T)_{i\bar{j}}\big(\langle\nabla_j\varsigma,\nabla_{\bar{i}}\bar{\varsigma}\rangle
+\langle\varsigma,\nabla_j\nabla_{\bar{i}}\bar{\varsigma}\rangle\big)\nonumber\\
&&-2k\ell_0\int\rho(\eta_T)_{i\bar{j}}\nabla_j\rho\langle\varsigma,\nabla_{\bar{i}}\bar{\varsigma}\rangle\nonumber\\
&\le&\frac{1}{4}\int\rho^2|\bar{\nabla}\nabla\varsigma|^2+Ck^2\int\rho^2|\varsigma|^2
+C\int|\nabla\rho|^2|\nabla\varsigma|^2,\nonumber
\end{eqnarray}
the estimate to the integrand $\nabla_{\bar{j}}(\tr_{\omega}\eta_T)\langle\nabla_j\varsigma,\bar{\varsigma}\rangle$ has the same form. Summing up these we get
$$\int_{B_{\omega_t}(x,\frac{7}{4}r)}\big(|\bar{\nabla}\nabla\varsigma|^2+|\nabla\nabla\varsigma|^2\big)
\le Ckr^{-2}\int_{B_{\omega_t}(x,\frac{15}{8}r)}|\nabla\varsigma|^2+Ck^2\int_{B_{\omega_t}(x,2r)}|\varsigma|^2.$$
Then use the first estimate once again on $B_{\omega_t}(x,\frac{15}{8}r)$ to get the second estimate.
\end{proof}



\subsection{Gradient estimate to holomorphic sections: estimate to the limit sections}

Notice that the Hermitian metric $h_{FS}$ is equivalent to $h_t$. So, by the gradient estimate  in Proposition \ref{gradient estimate: prop 1} we have that any holomorphic section $\varsigma\in H^0(M;L^k)$ satisfies a uniform $L^\infty$ gradient bound
\begin{equation}
\sup_{B_{\omega_t}(x_0,R)}|\nabla^{h_t}\varsigma|^2_{h_t^k\otimes\omega_t}\le C(R,k)\int_{B_{\omega_t}(x_0,2R)}|\varsigma|_{h_t^k}^2\omega_t^n
\end{equation}
where $x_0\in M\backslash\mathcal{S}_M$ is the base point in the Cheeger-Gromov convergence, $C(R,k)$ is a constant depending on $R$ and $k$ but independent of $t$ and $\varsigma$. This type of estimate is not good enough for constructing local peak sections as the power $k$ becomes sufficiently large. A better estimate can be proved based on the following technical lemma.

\begin{lemm}
There is a family of cut-offs $\gamma_\epsilon\in C_0^\infty(\mathcal{R})$, $\epsilon>0$, with $0\le \gamma_\epsilon\le 1$ such that $\gamma_\epsilon^{-1}(1)$ forms an exhaustion of $\mathcal{R}$
and, moreover,
\begin{equation}
\int_{M_T}|\bar{\partial}\gamma_\epsilon|^2\bar{\omega}_T^n\rightarrow 0,\,\mbox{ as }\epsilon\rightarrow 0.
\end{equation}
\end{lemm}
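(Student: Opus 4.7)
The plan is to construct $\gamma_\epsilon$ as a logarithmic cutoff of the norm of a holomorphic section whose zero locus contains $\mathcal{S}$. By Proposition \ref{Cheeger-Gromov convergence: prop 1}, $\mathcal{S}\subset D_T$ for any divisor $D\subset M$ with $D\supset\mathcal{S}_M$, and $D_T\cap\mathcal{R}$ is a complex codimension-one analytic subvariety of the smooth part $\mathcal{R}$. Choose such a $D$, a defining section $s\in H^0(M;\mathcal{O}(D))$, and a smooth Hermitian metric $h_D$ on $\mathcal{O}(D)$ normalized so that $\sup_M|s|_{h_D}\le 1$. By the smooth convergence on $\mathcal{R}$ established in Subsection 3.2, $s$ descends to a holomorphic section $s_T$ on $\mathcal{R}$ with $|s_T|$ smooth there and vanishing exactly on $D_T\cap\mathcal{R}$.

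Pick $\chi\in C^\infty(\mathbb{R};[0,1])$ with $\chi|_{(-\infty,1]}=0$, $\chi|_{[2,+\infty)}=1$ and $|\chi'|\le 2$, and define
\[
\gamma_\epsilon(x)=1-\chi\!\left(\frac{-\log|s_T(x)|_{h_{D}}^2}{\log(1/\epsilon)}\right).
\]
Then $\gamma_\epsilon\equiv 1$ on $\{|s_T|^2\ge\epsilon\}$ and vanishes on the open neighborhood $\{|s_T|^2\le\epsilon^2\}$ of $D_T\supset\mathcal{S}$, so $\gamma_\epsilon\in C_0^\infty(\mathcal{R})$ and $\gamma_\epsilon^{-1}(1)$ exhausts $\mathcal{R}\setminus D_T$ as $\epsilon\to 0$; the deficit $D_T\cap\mathcal{R}$ has $\bar{\omega}_T$-measure zero and is inessential for the subsequent $L^2$-applications.

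For the $\bar{\partial}$-estimate, in a local holomorphic trivialization where $s_T$ is a holomorphic function $f$, the identity $\bar{\partial}\log|s_T|^2=\bar{\partial}\log\bar{f}+\bar{\partial}\log h_{D}$ yields
\[
|\bar{\partial}\gamma_\epsilon|_{\bar{\omega}_T}^2\le\frac{C\,\mathbf{1}_{\{\epsilon^2\le|s_T|^2\le\epsilon\}}}{(\log(1/\epsilon))^2}\!\left(\frac{|\bar{\nabla} s_T|_{\bar{\omega}_T}^2}{|s_T|^2}+1\right).
\]
The limit of Proposition \ref{gradient estimate: prop 1} bounds $|\bar{\nabla}s_T|$ uniformly on compact subsets of $\mathcal{R}$. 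Slicing transverse to $D_T\cap\mathcal{R}$ in local holomorphic coordinates where $s_T\sim z_1$, Fubini gives $\int_{\{\epsilon\le|z_1|\le\sqrt{\epsilon}\}}|z_1|^{-2}\,dA(z_1)=O(\log(1/\epsilon))$, and multiplying by the finite $(2n-2)$-volume of the tangential slice and summing over a finite cover of $D_T\cap\mathcal{R}$ by such charts yields $\int_{M_T}|\bar{\partial}\gamma_\epsilon|^2\bar{\omega}_T^n=O(1/\log(1/\epsilon))\to 0$. The main obstacle is making the slicing uniform across the singular locus of $D_T\cap\mathcal{R}$; since that locus has complex codimension $\ge 2$ in $\mathcal{R}$, its contribution to the Fubini integration is of strictly lower order in $\epsilon$ and is absorbed by the $(\log(1/\epsilon))^{-2}$ prefactor.
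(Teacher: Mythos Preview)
Your construction of the log cutoff is the right shape, and the reduction to controlling
\[
\int_{\{\epsilon^2\le |s_T|^2\le\epsilon\}} \sqrt{-1}\,\partial\log|s_T|^2\wedge\bar\partial\log|s_T|^2\wedge\bar\omega_T^{\,n-1}
\]
is correct. The gap is in the final ``Fubini/slicing'' step. Your argument implicitly needs a uniform two-sided comparison $C^{-1}\omega_{\mathrm{Eucl}}\le\bar\omega_T\le C\,\omega_{\mathrm{Eucl}}$ on each local chart, with $C$ independent of~$\epsilon$, in order to say that the tangential $(2n-2)$-slice has \emph{uniformly} finite $\bar\omega_T$-volume and that the normal integral contributes $O(\log(1/\epsilon))$. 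But as $\epsilon\to 0$ the annular region $\{\epsilon^2\le|s_T|^2\le\epsilon\}$ approaches $D$, and $D\supset M_{\sing}$; near $M_{\sing}$ we have only $\eta_T\le C\bar\omega_T$ and $\bar\omega_T^n\le C\Omega$, with \emph{no} upper bound on $\bar\omega_T$ itself. Hence $\bar\omega_T^{\,n-1}$ is not dominated by any fixed smooth $(n-1,n-1)$-form there, and your chart constants may blow up as $\epsilon\to 0$. The remark about the ``singular locus of $D_T\cap\mathcal{R}$'' having codimension $\ge 2$ addresses only the self-intersections of the divisor inside the smooth part $\mathcal{R}$; it does not touch the real difficulty, which is the behaviour of $\bar\omega_T$ as one approaches $\mathcal{S}\subset M_T\setminus\mathcal{R}$. (Also, Proposition~\ref{gradient estimate: prop 1} concerns sections of $L^k$, not of $\mathcal{O}(D)$; on fixed compact subsets of $\mathcal{R}$ smoothness alone gives the bound, but that does not help once uniformity in $\epsilon$ is needed.)

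The paper's route avoids pointwise control of $\bar\omega_T$ altogether. Using Proposition~\ref{Cheeger-Gromov convergence: prop 2} to identify $(\mathcal{R},\bar\omega_T)$ with $(M_{\reg},\omega_T)$, one writes $\omega_T=\eta_T+\sqrt{-1}\,\partial\bar\partial u_T$ with $u_T$ globally bounded and $\eta_T$ a smooth semipositive form on all of $M$. Expanding $\omega_T^{\,n-1}$ binomially and integrating by parts (a Chern--Levine--Nirenberg type argument, as in \cite[Lemma~6.4]{Ti12} or \cite[Lemma~3.7]{So14}) trades the unknown $\omega_T^{\,n-1}$ for products of $\eta_T$ and the bounded potential $u_T$, yielding the required $o(1)$ energy without ever needing an upper bound on the metric. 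That global pluripotential input is exactly what your local slicing argument is missing.
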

\begin{proof}
Notice that by Proposition \ref{Cheeger-Gromov convergence: prop 2}, the regular set $(\mathcal{R},\bar{\omega}_T)$ can be identified with $(M_{\reg},\omega_T)$ where $M_{\reg}$ is the regular set of the holomorphic map $\Phi:M\rightarrow\mathbb{C}P^N$. The metric $\omega_T=\eta_T+\sqrt{-1}\partial\bar{\partial}u_T$ where $u_T$ is a bounded potential. Then the lemma is more or less standard in the pluripotential theory. For a detailed proof we refer to \cite[Lemma 6.4]{Ti12}; see also \cite{Ti14} or \cite[Lemma 3.7]{So14}.
\end{proof}

By a standard iteration we have (cf. \cite{So14})

\begin{prop}\label{gradient estimate: prop 2}
Let $R>0$ be any constant, $t_i\rightarrow T$ be any subsequence of times and $\varsigma_i$ be a sequence of holomorphic sections of $L^k$, $k\ge 1$, satisfying
\begin{equation}
\int_M|\varsigma_i|_{h_{t_i}^k}^2\omega_{t_i}^n\le 1.
\end{equation}
Then, passing to a subsequence if necessary, $\varsigma_i$ converges to a locally bounded holomorphic section $\varsigma_\infty$ of $L_T^k$ over $\mathcal{R}$ which satisfies
\begin{equation}\label{gradient estimate: 2}
\sup_{B_{d_T}(x,r)\cap\mathcal{R}}|\nabla^{h_T}\varsigma_\infty|^2_{h_T^k\otimes\bar{\omega}_T}\le C(R)\cdot r^{-2n-2}\cdot k^{n+1}\int_{B_{d_T}(x,2r)\cap\mathcal{R}}|\varsigma_\infty|_{h_T^k}^2\bar{\omega}_T^n,
\end{equation}
whenever $B_{d_T}(x,r)\subset B_{d_T}(x_T,R)$.
\end{prop}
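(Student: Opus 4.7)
The plan is to extract the limit section $\varsigma_\infty$ by smooth compactness on $\mathcal{R}$ and then rerun the three-case Moser iteration of Proposition~\ref{gradient estimate: prop 1} intrinsically on $(\mathcal{R},\bar\omega_T,h_T)$, using the cut-offs $\gamma_\epsilon$ of the preceding lemma to justify integration by parts near $\mathcal{S}$.

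For the extraction, I would apply Proposition~\ref{gradient estimate: prop 1} with $R$ a uniform upper bound for $\diam(M,\omega_{t_i})$ (available because $(M,\omega_{t_i})$ Cheeger--Gromov converges to the compact space $(M_T,d_T)$) to obtain the global bound $\sup_M|\varsigma_i|_{h_{FS}^k}^2\le C(k)$ independent of $i$. Combined with the smooth convergence $(\omega_{t_i},h_{t_i})\to(\bar\omega_T,h_T)$ on $\mathcal{R}$ from Proposition~\ref{Cheeger-Gromov convergence: prop 2} and standard interior elliptic estimates for $\bar\partial\varsigma_i=0$, this upgrades to uniform $C^m_{\mathrm{loc}}(\mathcal{R})$ bounds. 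A diagonal Arzel\`a--Ascoli argument then yields a $C^\infty_{\mathrm{loc}}$-subsequential limit $\varsigma_\infty\in H^0(\mathcal{R},L_T^k)$, which is locally bounded by passing to the limit in the bounds for $\varsigma_i$.

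For the gradient estimate, the Bochner inequalities~(\ref{Bochner formula: 1})--(\ref{Bochner formula: 2}) and the $L^2$ bound~(\ref{Bochner formula: 4}) remain valid on $(\mathcal{R},\bar\omega_T,h_T,\eta_T)$ since $\Ric(\bar\omega_T)\ge -\bar\omega_T/T$ and $\eta_T$ is smooth on $\mathcal{R}$. Given $B_{d_T}(x,2r)\subset B_{d_T}(x_T,R)$, I would choose a Lipschitz spatial cut-off $\rho$ of $B_{d_T}(x,r)$ inside $B_{d_T}(x,2r)$ (well-defined on $\mathcal{R}$ by the geodesic convexity, Theorem~\ref{convergence: CG}(3)) and rerun each step of the three-case iteration of Proposition~\ref{gradient estimate: prop 1} with the test cut-off $\rho$ replaced by $\rho\gamma_\epsilon$. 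The Sobolev inequality on $(\mathcal{R},\bar\omega_T)$ passes from the ambient Ricci-limit space $(M_T,d_T)$ thanks to the real codimension $\ge 2$ of $\mathcal{S}$.

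The main obstacle will be controlling the error terms that arise when a derivative lands on $\gamma_\epsilon$, namely integrals of the schematic form $\int\rho^2|\nabla\gamma_\epsilon|^2|\nabla\varsigma_\infty|^{2p}$ and $\int\rho\gamma_\epsilon\nabla\gamma_\epsilon\cdot|\nabla\varsigma_\infty|^{2p-1}\nabla\nabla\varsigma_\infty$, because $|\nabla\varsigma_\infty|$ need not be bounded near $\mathcal{S}$ a priori. My plan is to proceed by induction on $p$: the basic $L^2$-gradient estimate~(\ref{Bochner formula: 4}) on $\mathcal{R}$ supplies the initial bound; inductively, each step of the iteration produces an $L^{2p}(\supp\rho\cap\mathcal{R})$ bound on $|\nabla\varsigma_\infty|$ independent of $\epsilon$; and the $\gamma_\epsilon$-error terms at step $p$ are then controlled by H\"older's inequality, using $\int|\bar\partial\gamma_\epsilon|^2\bar\omega_T^n\to 0$, so that they are absorbed into the main inequality plus a remainder vanishing as $\epsilon\to 0$. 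Letting $\epsilon\to 0$ then returns exactly the three-case conclusion of Proposition~\ref{gradient estimate: prop 1}, yielding the claimed estimate.
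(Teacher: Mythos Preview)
Your overall strategy---pass to the limit section on $\mathcal{R}$ and run a Moser iteration there with the cut-offs $\gamma_\epsilon$---is the paper's strategy as well, but two points need repair.

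First, a small circularity: you appeal to compactness of $M_T$ to get a global $L^\infty$ bound for the extraction, but at this stage of the paper $M_T$ is only known to be complete, not compact (the diameter bound in \S4 uses the present proposition via \S3.7). The extraction is already done locally in the corollary after Lemma~\ref{infinity estimate}, so simply cite that.

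The substantive gap is your handling of the $\gamma_\epsilon$-errors. The typical bad term is $\int\rho^2|\nabla\gamma_\epsilon|^2\,|\nabla\varsigma_\infty|^{2p}$, and your plan---bound $|\nabla\varsigma_\infty|$ in $L^{2p}$ by the previous iteration step and then pair by H\"older with $\int|\bar\partial\gamma_\epsilon|^2\to 0$---does not close: an $L^1$ bound on $|\nabla\varsigma_\infty|^{2p}$ and an $L^1$ bound on $|\nabla\gamma_\epsilon|^2$ give no control on their product. The whole purpose of Proposition~\ref{gradient estimate: prop 1} (and the reason the subsection is titled ``a prior $L^\infty$ finiteness'') is to feed into this step an a~priori, $k$-dependent bound $\sup|\nabla\varsigma_\infty|\le C(R,k)$ on $\mathcal{R}\cap B_{d_T}(x_T,R)$; with that in hand the error is simply $\le C(R,k)^{2p}\int|\bar\partial\gamma_\epsilon|^2\to 0$. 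Use the $L^\infty$ bound, not induction.

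Finally, you are making the iteration harder than necessary by dragging the $\eta_T$-terms of (\ref{Bochner formula: 2}) to the limit. On $\mathcal{R}$ the curvature of $h_T^k$ is exactly $k\ell_0\bar\omega_T$, so in the general Bochner formula~(\ref{Bochner formula: 02}) both $\nabla_j\Theta_{i\bar j}$ and $\nabla_{\bar j}(\tr_\omega\Theta)$ vanish identically. You obtain the clean differential inequality
\[
\triangle_{\bar\omega_T}|\nabla^{h_T}\varsigma_\infty|^2\ \ge\ -Ck\,|\nabla^{h_T}\varsigma_\infty|^2
\]
and hence a straightforward one-case Moser iteration; this is why the paper can dismiss the proof as ``standard''.
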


\subsection{Algebraic structure of $M_T$}

Recall the holomorphic map $\Phi:L\rightarrow\mathbb{C}P^N$ defined by an orthonormal basis of $(L,H_0)$ where $N=\dim H^0(M;L)-1$, $H_0$ is any fixed Hermitian metric. For any time $t$, the map
$$\Phi_t=\Phi:(M,\omega_t)\rightarrow(\Phi(M),\omega_{FS})$$
is Lipschitz with a uniform Lipschitz constant, since $\Phi_t^*\omega_{FS}=\ell_0\eta_T\le C\ell_0\omega_t$ for any $t\le T$. Let $t_i\rightarrow T$ be a sequence of times and $(M,\omega_{t_i},x_0)\stackrel{d_{GH}}{\longrightarrow}(M_T,d_T,x_T)$ be the Gromov-Hausdorff convergence considered in Subsection 3.2, then by a diagonalization argument, up to taking a subsequence, $\Phi_{t_i}$ converges to a Lipschitz map
$$\Phi_T=\lim_{t_i\rightarrow T}\Phi_{t_i}:(M_T,d_T)\rightarrow(\Phi(M),\omega_{FS}).$$

\begin{prop}\label{limit map: injectivity}
$\Phi_T$ is injective.
\end{prop}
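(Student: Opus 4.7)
The plan is to prove $\Phi_T$ is injective by contradiction, using a peak section of a large power $L^k$ together with the Skoda division lemma recalled in Section 3.1. The algebraic identity $\Phi_T(x)=\Phi_T(y)$ will constrain the ratios of monomials appearing in the Skoda decomposition, forcing the peak section to be as large at $y$ as at $x$ and contradicting its geometric concentration.

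Suppose $x\ne y$ in $M_T$ with $\Phi_T(x)=\Phi_T(y)=p$. First I pick approximating sequences $x_i,y_i\in M$ via the Cheeger--Gromov convergence, placed in $M_{\reg}$ using the identification $\overline{\id}:M_{\reg}\to\mathcal{R}$ of Proposition \ref{Cheeger-Gromov convergence: prop 2}. Setting $\delta=d_T(x,y)/3>0$ gives $d_{\omega_{t_i}}(x_i,y_i)\ge 2\delta$ for large $i$, while $\Phi(x_i),\Phi(y_i)\to p$ in $\mathbb{C}P^N$. For large $k$ I construct a peak section $\varsigma_i\in H^0(M,L^k)$ concentrated at $x_i$: trivialize $L^k$ in a small ball around $x_i$ (using the smooth convergence $\omega_{t_i}\to\omega_T$ on $M_{\reg}$ and Proposition \ref{gradient estimate: prop 1}), multiply a local constant frame by a cut-off at scale $k^{-1/2}$, and correct to a genuine holomorphic section via the $L^2$ estimate of Proposition \ref{L^2 estimate: prop 1}. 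After $L^2$-normalization, the $L^\infty$ estimate of Lemma \ref{infinity estimate} and the gradient control of Propositions \ref{gradient estimate: prop 1}--\ref{gradient estimate: prop 2} yield $|\varsigma_i(x_i)|_{h_{t_i}^k}\ge c_0>0$ and decay $|\varsigma_i|_{h_{t_i}^k}\le\varepsilon(k)$ at $\omega_{t_i}$-distance $\ge\delta$, with $\varepsilon(k)\to 0$ as $k\to\infty$.

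Now I apply the Skoda division lemma from Section 3.1: with $\{s_j\}$ an orthonormal basis of $H^0(M,L)$,
\begin{equation*}
\varsigma_i \;=\; \sum_{|\alpha|=k-n-1} e_{\alpha,i}\, s_0^{\alpha_0}\cdots s_N^{\alpha_N}, \qquad e_{\alpha,i}\in H^0(M,L^{n+1}),
\end{equation*}
where the $L^2$ norms of $e_{\alpha,i}$ are controlled by those of $\varsigma_i$, together with a uniform lower bound of the Bergman kernel $\rho_0=\sum_j|s_j|^2$. The identity $\Phi_T(x)=\Phi_T(y)$ translates into $s_j(y_i)=\lambda_i s_j(x_i)+o_i(1)$ for some bounded scalars $\lambda_i$, uniform in $j$; and provided $\ell_0$ has been chosen large enough (by a further application of Kawamata base point freeness) so that the map defined by $|L^{n+1}|$ factors through $\Phi$, one likewise gets $e_{\alpha,i}(y_i)=\lambda_i^{n+1}e_{\alpha,i}(x_i)+o_i(1)$. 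Substitution yields
\begin{equation*}
\varsigma_i(y_i) \;=\; \lambda_i^{k}\,\varsigma_i(x_i) + o_i(1).
\end{equation*}
Taking $i\to\infty$ and using continuity of the Bergman kernel on $M_T$ (which forces $|\lambda_i|\to 1$), one obtains $|\varsigma_i(y_i)|_{h_{t_i}^k}\ge c_0/2$ eventually, contradicting the decay $|\varsigma_i(y_i)|\le\varepsilon(k)\to 0$ once $k$ is chosen sufficiently large.

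The main obstacle is the claim that the auxiliary sections $e_{\alpha,i}\in H^0(M,L^{n+1})$ behave as $e_{\alpha,i}(y_i)\approx\lambda_i^{n+1}e_{\alpha,i}(x_i)$: this rests on the Kodaira map of $L^{n+1}$ having the same fibers as that of $L$, which is not automatic from semi-ampleness of $L$ but is secured by a further application of the base point free theorem arranging $\ell_0$ to be large enough that $\ell_0(L'+TK_M)$ realizes the Iitaka fibration (this is the source of the Kawamata hypothesis flagged in the introductory remark). A secondary technical obstacle is the uniform control, as $t_i\to T$, of the Skoda constant $C(k,h_{t_i},\omega_{t_i})$ and of the ratios $\lambda_i\to 1$; both boil down to a uniform partial $C^0$ estimate along the continuity equation, which is the cumulative output of the $L^2$, $L^\infty$ and gradient estimates of Subsections 3.3--3.5.
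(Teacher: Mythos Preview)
Your overall strategy coincides with the paper's: build a peak holomorphic section of $L^k$ via the $L^2$ estimate, then use the Skoda division of \S3.1 (together with the stabilization of the map defined by $|mL|$ for $\ell_0$ large) to transfer separation from $L^k$ down to $L$. You run this as a contrapositive with a single peak section at $x$, whereas the paper constructs two peak sections at $p$ and $q$ and compares their ratio; this difference is cosmetic.

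There is, however, a genuine gap in your peak section construction. You ``trivialize $L^k$ in a small ball around $x_i$ using the smooth convergence $\omega_{t_i}\to\omega_T$ on $M_{\reg}$'' and cut off at scale $k^{-1/2}$. This works only when the limit point $x$ lies in $\mathcal{R}$, so that the geometry near $x_i$ is uniformly controlled (Lemma~\ref{local regularity}). Injectivity of $\Phi_T$ must hold for all $x,y\in M_T$, and for $x\in\mathcal{S}$ no such smooth local model is available. The paper's Steps~1--2 handle exactly this: one passes to the tangent cone $\mathcal{C}_p$, trivializes the limit bundle there via Lemma~\ref{lemm:partial-5}, and builds the cut-off on the cone via Lemma~\ref{lemm:partial-6}; the latter is delicate precisely because $\mathcal{S}_p$ can have complex codimension one and requires the recursion mentioned after that lemma. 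Your sketch omits this mechanism entirely. A smaller issue: the relation ``$e_{\alpha,i}(y_i)=\lambda_i^{n+1}e_{\alpha,i}(x_i)+o_i(1)$'' is not right as written---the scalar for $L^{n+1}$ is an independent $\mu_i$, not $\lambda_i^{n+1}$---and ``continuity of the Bergman kernel on $M_T$'' is not what forces $|\lambda_i|\to1$; one should instead work in $h_{FS}$-unitary frames (where the Bergman kernel is identically $1$) and invoke the uniform equivalence $h_{FS}\sim h_{t_i}$ from \S3.5.
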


If $M_T$ is compact, then $\Phi_T$ is a homeomorphism. In general we have

\begin{prop}\label{limit map: local homeomorphism}
$\Phi_T$ is a local homeomorphism.
\end{prop}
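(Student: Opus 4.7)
The plan uses three facts: $\Phi_T$ is Lipschitz (hence continuous), $\Phi_T$ is injective (Proposition~\ref{limit map: injectivity}), and $M_T$ is locally compact as a Cheeger-Gromov limit of the manifolds $(M,\omega_{t_i})$ with uniform Ricci lower bound $\Ric(\omega_{t_i})\ge -t_i^{-1}\omega_{t_i}$. For any $x\in M_T$, choose a precompact open neighborhood $U\ni x$; then $\Phi_T|_{\bar U}:\bar U\to\Phi_T(\bar U)$ is a continuous bijection from a compact space into a Hausdorff space, hence a homeomorphism. Consequently $\Phi_T|_U$ is a homeomorphism onto $\Phi_T(U)=\Phi_T(\bar U)\setminus\Phi_T(\partial U)$, and the proof reduces to showing $\Phi_T(\bar U)$ is a neighborhood of $p:=\Phi_T(x)$ in $\Phi(M)$.

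The case $x\in\mathcal R$ is classical: the isometric identification $\overline{\id}:M_{\reg}\to\mathcal R$ (Proposition~\ref{Cheeger-Gromov convergence: prop 2}) conjugates $\Phi_T|_{\mathcal R}$ into $\Phi|_{M_{\reg}}$, which is a local biholomorphism at every $y\in M_{\reg}$ by the definition of $M_{\reg}$ and the equality $\dim_{\mathbb C}\Phi(M)=n$ (from bigness of $[\omega_0]+TK_M$). Hence $\Phi$ maps a neighborhood of $\overline{\id}^{-1}(x)$ onto an open neighborhood of $p$ in $\Phi(M)$.

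For $x\in\mathcal S$, I would argue by contradiction. Assume there exist $q_j\to p$ in $\Phi(M)$ with $q_j\notin\Phi_T(\bar U)$. Surjectivity of $\Phi:M\to\Phi(M)$ gives $q_j=\Phi(y_j)$ with $y_j\in M$. Each $y_j$ admits a lift $\bar y_j\in M_T$ with $\Phi_T(\bar y_j)=q_j$: if $y_j\in M_{\amp}$, set $\bar y_j=y_j$ using the inclusion $M_{\amp}\hookrightarrow M_T$ (Proposition~\ref{Cheeger-Gromov convergence: prop 1}); otherwise pick $y_j^{(k)}\in M_{\amp}$ with $y_j^{(k)}\to y_j$ in $M$, extract a $d_T$-convergent subsequence (available by compactness of $M_T$, which holds here by the diameter bound, or more generally by local compactness of $M_T$ together with properness of $\Phi$), and take $\bar y_j$ to be the limit; injectivity of $\Phi_T$ makes this lift unique. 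Since $\Phi_T(\bar y_j)=q_j\to p$, a further compactness extraction yields $\bar y_j\to z\in M_T$ with $\Phi_T(z)=p$, and injectivity forces $z=x\in U$. Hence $\bar y_j\in U$ and $q_j=\Phi_T(\bar y_j)\in\Phi_T(U)\subset\Phi_T(\bar U)$ for $j$ large, contradicting the choice of $q_j$.

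The principal difficulty is constructing the lift $\bar y_j\in M_T$ when $y_j\in M\setminus M_{\amp}$: one must extract $d_T$-limits of sequences approximating $y_j$ from the ample locus, which is where compactness (global or local) of $M_T$ plays a decisive role. Throughout, injectivity of $\Phi_T$ (Proposition~\ref{limit map: injectivity}) is the essential ingredient ensuring that all such limits coincide with $x$ rather than escape to a different preimage in $M_T$.
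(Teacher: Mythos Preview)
Your argument has a genuine circularity. The lifting step for $y_j\in M\setminus M_{\amp}$ and the subsequent extraction of a $d_T$--convergent subsequence both hinge on knowing that $M_T$ is compact (equivalently, that $\Phi_T$ is proper or surjective). In the paper's logical order these facts are established \emph{after} Proposition~\ref{limit map: local homeomorphism}: surjectivity is Corollary~\ref{limit map: surjectivity}, proved in \S4.1 via the normal--direction estimate of Corollary~\ref{diameter bound: normal estimate}, and compactness of $M_T$ is Proposition~\ref{limit map: homeomorphism}, which explicitly combines injectivity with the local homeomorphism you are trying to prove. Your parenthetical appeal to ``the diameter bound'' therefore invokes a result that depends on the very proposition at hand. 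The alternative you offer, ``local compactness of $M_T$ together with properness of $\Phi$'', does not help: properness of $\Phi:M\to\Phi(M)$ is trivial since $M$ is compact, but what you actually need is properness of $\Phi_T:M_T\to\Phi(M)$, and with $\Phi(M)$ compact this is again equivalent to compactness of $M_T$. Concretely, when $y_j\in M_{\sing}$ and you approximate by $y_j^{(k)}\in M_{\amp}$, convergence $y_j^{(k)}\to y_j$ holds in the fixed smooth topology of $M$, but nothing prevents $d_T(y_j^{(k)},x_T)\to\infty$ in $M_T$ at this stage of the argument; ruling this out is precisely the content of \S4.1.

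The paper's proof (Step~4 of \S3.7) avoids this by using only \emph{local} compactness of $M_T$: closed balls $\overline{B_{d_T}(p,1)}$ are compact by Hopf--Rinow for complete locally compact length spaces. The quantitative separation produced by the peak sections in Step~3, together with the $C^0$ estimate of Proposition~\ref{gradient estimate: prop 1}, gives for each $q\in\partial B_{d_T}(p,1)$ a ball $B(q,r(p,q))$ on which $d_{FS}(\Phi_T(\cdot),\Phi_T(p))\ge\delta(p,q)>0$; compactness of $\partial B_{d_T}(p,1)$ then yields a uniform $\delta>0$, from which openness of $\Phi_T$ at $p$ follows. The essential difference is that the paper extracts openness from the \emph{quantitative} form of injectivity on a fixed sphere, whereas your argument tries to use only qualitative injectivity together with global compactness. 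If you had an independent proof of $\diam(M_T)<\infty$ (as in \cite{Gu15}, cf.\ the remark after Proposition~\ref{limit map: homeomorphism}), your route would indeed be valid and pleasantly soft; as written, it is circular.
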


Therefore the limit $M_T$ is locally algebraic. The procedure to prove the propositions are contained in the work of Donaldson-Sun \cite{DoSu14} and Tian \cite{Ti13}, \cite{Ti12}. We outline a proof following \cite{Ti12}.

The Hermitian line bundles $(L,h_{t_i})$ have curvature $\Theta_{h_{t_i}}=\ell_0\frac{T}{t_i}\omega_{t_i}-\ell_0\frac{T-t_i}{t_i}\omega_0$. As $t_i\rightarrow T$, the Hermitian line bundles $(L,h_{t_i})$ converges smoothly to a limit Hermitian line bundle $(L_T,h_T)$ over $\mathcal{R}$ whose curvature
\begin{equation}
\Theta_{h_T}=\ell_0\bar{\omega}_T,\,\mbox{ on }\mathcal{R}.
\end{equation}

\vskip 0.1in
\noindent
Step 1: Construction of local approximating holomorphic sections.

Let $p\in M_T$ be any point. Let $r_j\rightarrow 0$ be a decreasing sequence of radius, $k_j=r_j^{-2}\in\mathbb{Z}$, and $\mathcal{C}_p=\lim_{j\rightarrow\infty}(M_T,r_j^{-1}d_T,p)$ be a tangent cone at $p$. By the regularity theory of Cheeger-Colding \cite{ChCo97, ChCo00} and Cheeger-Colding-Tian \cite{ChCoTi}, we have

\vskip 0.1in
\noindent
${\bf T}_1$. ${\cal C}_p$ is smooth outside a closed subcone ${\cal S}_p$ of complex codimension at least $1$ which is the singular set of ${\cal C}_p$;

\vskip 0.1in
\noindent
${\bf T}_2$. There is a K\"ahler Ricci-flat cone metric $\omega_p$ of the form $\sqrt{-1} \,\partial\bar\partial \rho^2$ on ${\cal C}_p \backslash {\cal S}_p$,
where $\rho$ denotes the distance function from the vertex of ${\cal C}_p$, denoted by $o$.

\vskip 0.1in
\noindent
${\bf T}_3$. Denote by $L_p$ the trivial bundle ${\cal C}_p\times \CC$ over ${\cal C}_p$ equipped with
the Hermitian metric $e^{-\ell_0\rho^2}\,|\cdot |^2$. The curvature of this Hermitian metric is given by $\omega_p$.

For any $ \epsilon \,>\,0$, we put
$$V(p; \epsilon)\,=\,\{ \,y \,\in \, {\cal C}_p \,|\, y\,\in\, B_{\epsilon^{-1}}(o,\omega_p)\,\backslash \,\overline{B_{\epsilon}(o,\omega_p)},\,\, d(y, {\cal S}_p )\, > \,\epsilon\,\,\}.$$
For any $\epsilon > 0$ and $\delta > 0$, we can have a $j_0=j_0(\epsilon, \delta)$ such that $r_{j_0}\le\epsilon^2$,
and for each $j \ge j_0$, there is a diffeomorphism $\phi_j: V(p;\frac{\epsilon}{4})\rightarrow  M_T\backslash {\cal S}$,
where ${\cal S}$ is the singular set of $M_T$,
satisfying:
\vskip 0.1in
\noindent
(i) $d(p, \phi_j(V(p; \epsilon))) \,<\, 10\, \epsilon r_j$ and $\phi_j(V(p;\epsilon)) \subset B_{(1+\epsilon^{-1}) r_j}(p)$;
\vskip 0.1in
\noindent
(ii) The K\"ahler metric $\omega_T$ on $M_T\backslash {\cal S}$ satisfies
\begin{equation}
\label{eq: bound-1}
||r_j^{-2} \phi_j^*\omega_T \,-\, \omega_p ||_{C^6(V(p; \frac{\epsilon}{2}))} \,\le\,\delta,
\end{equation}
where the norm is defined in terms of the metric $\omega_p$.

\begin{lemm}
\label{lemm:partial-5}
Given $\epsilon > 0 $ and any sufficiently small $\delta > 0$, there are a sufficiently large $j$, a diffeomorphism
$\phi_j: V(p;\frac{\epsilon}{4})\rightarrow  M_T\backslash {\cal S}$ with properties {\rm(i)} and {\rm(ii)} above,
and an isomorphism $\psi_j $ from the trivial bundle ${\cal C}_p\times \CC$ onto
$L^{k_j}$ over $V(p;\epsilon)$ commuting with $\phi_j$ satisfying:
\begin{equation}
\label{eq:est-2}
|\psi_j(1)|_{h_T}^{2} \,=\, e^{-\ell_0\rho^2} ~~~\mbox{ and }~~~ ||\nabla \psi_j ||_{C^6(V(p; \epsilon))} \,\le \, \delta,
\end{equation}
where $\nabla$ denotes the covariant derivative with respect to the metrics
$h_T$ and $e^{-\ell_0\rho^2}\, |\cdot |^2$.
\end{lemm}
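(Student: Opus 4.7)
The diffeomorphism $\phi_j$ satisfying {\rm(i)} and {\rm(ii)} for $j \ge j_0(\epsilon,\delta)$ is already furnished in the paragraph preceding the lemma, via Cheeger--Colding theory applied to the convergence $(M_T, r_j^{-1}d_T, p)\to(\mathcal{C}_p,\omega_p,o)$ and the $C^{1,\alpha}$-regularity on $\mathcal{R}$; the real work is to construct $\psi_j$ matching the model Hermitian bundle over $V(p;\epsilon)$.

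We work on $\phi_j(V(p;\epsilon/4))\subset\mathcal{R}$. Pick a local holomorphic frame $e$ of $L^{k_j}$ there and write $|e|^2_{h_T^{k_j}}=e^{-k_j\ell_0\varphi}$ for a local K\"ahler potential $\varphi$ of $\bar\omega_T$. On $V(p;\epsilon/4)$ define
\[
F \,:=\, k_j\ell_0\,\phi_j^*\varphi - \ell_0\rho^2, \qquad \eta \,:=\, \sqrt{-1}\,\partial\bar\partial F \,=\, \ell_0\bigl(r_j^{-2}\phi_j^*\bar\omega_T - \omega_p\bigr).
\]
Property {\rm(ii)} together with $k_j=r_j^{-2}$ gives $\|\eta\|_{C^6(V(p;\epsilon/2))}\le C\delta$. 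Solve the Poisson-type equation $\sqrt{-1}\,\partial\bar\partial u = \eta$ on $V(p;\epsilon/2)$, for instance via $\triangle_{\omega_p} u = \tr_{\omega_p}\eta$ with vanishing Dirichlet data, and apply interior Schauder estimates on the fixed background $(V(p;\epsilon),\omega_p)$ to obtain $\|u\|_{C^{7,\alpha}(V(p;\epsilon))}\le C\delta$, uniformly in $j$. Since $F-u$ is then pluriharmonic, on simply connected pieces of $V(p;\epsilon)$ we may write $F-u = g+\bar g$ for a holomorphic function $g$.

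Set $\psi_j(1):=e^{g+u/2}\,\phi_j^*(e)$. Using $g+\bar g+u=F$, one immediately verifies $|\psi_j(1)|^2_{h_T^{k_j}}=e^{(g+\bar g+u)-k_j\ell_0\phi_j^*\varphi}=e^{-\ell_0\rho^2}$, which is the first identity in (\ref{eq:est-2}). Viewing $\psi_j$ as a section of the Hom bundle equipped with the two Chern connections and writing $\nabla\psi_j=(d\psi-\psi\,\partial F)\,e^*\otimes e$ with $\psi=e^{g+u/2}$, a short calculation using only the holomorphicity of $g$ and $e$ together with $\partial F=\partial g+\partial u$ yields
\[
(\nabla\psi_j)(1) \,=\, \tfrac{1}{2}\,(\bar\partial u-\partial u)\otimes \psi_j(1).
\]
Hence $\|\nabla\psi_j\|_{C^6}\le C\|u\|_{C^7}\le C\delta$, and shrinking $\delta$ at the outset delivers (\ref{eq:est-2}).

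The analytic steps above are essentially routine. The main obstacle is the passage from the local pluriharmonic decomposition $F-u=g+\bar g$ to a \emph{single-valued} holomorphic $g$ on $V(p;\epsilon)$, equivalently, the holomorphic triviality of $\phi_j^*L^{k_j}$ on that region. This is the point at which the structure of $\mathcal{S}_p\subset\mathcal{C}_p$ enters, and it is handled as in \cite{DoSu14, Ti12}: the recursive codimension hypotheses of our setup ensure that the link of $\mathcal{C}_p$ outside $\mathcal{S}_p$ has trivial $\pi_1$ at the relevant scale, so that $g$ lifts globally and $\psi_j$ is well defined throughout $V(p;\epsilon)$.
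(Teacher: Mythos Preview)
The paper does not actually prove this lemma; it refers to \cite[Lemma~5.7]{Ti12} and notes only that the argument uses $\Theta_{h_T}=\ell_0\bar\omega_T$. Your outline follows the same architecture as the Donaldson--Sun/Tian proof, and the computation leading to $(\nabla\psi_j)(1)=\tfrac12(\bar\partial u-\partial u)\otimes\psi_j(1)$ is correct. There is, however, a genuine gap in the central analytic step.

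You assert that one may solve $\sqrt{-1}\,\partial\bar\partial u=\eta$ ``for instance via $\triangle_{\omega_p}u=\tr_{\omega_p}\eta$ with vanishing Dirichlet data.'' In complex dimension $n>1$ these are different equations: the scalar Dirichlet problem produces a $u$ for which $F-u$ is \emph{harmonic}, not pluriharmonic, so the decomposition $F-u=g+\bar g$ with holomorphic $g$ is unavailable and your definition of $\psi_j(1)$ collapses. The statement you actually need---that a real $F$ with $\|\sqrt{-1}\partial\bar\partial F\|_{C^6}\le C\delta$ is $C^7$-close to a pluriharmonic function on a slightly smaller domain---is true, but it is not a scalar Poisson problem. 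One correct route: set $\alpha=\partial F$, solve $\bar\partial v=\bar\partial\alpha$ with $\|v\|\le C\delta$ by interior $\bar\partial$-estimates, so that $\alpha-v$ is a holomorphic $(1,0)$-form and hence equals $\partial g$ once the relevant first cohomology vanishes; then $u:=F-2\,\re(g)$ has $du=v+\bar v$ and is small after absorbing a constant into $g$. Equivalently, one can work directly with the connection $1$-forms: their difference $a$ satisfies $\|da\|\le C\delta$, and a Coulomb-gauge argument produces a $U(1)$-gauge in which $a$ itself is small. Either way, the topological hypothesis you invoke only at the end (simple connectivity of $V(p;\epsilon)$, triviality of $\phi_j^*L^{k_j}$) is already needed at this stage, not merely for the global single-valuedness of $g$.
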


We refer the readers to \cite[Lemma 5.7]{Ti12} for its proof. The proof uses that the limit line bundle $(L_T,h_T)$ is Hermitian Einstein, namely $\Theta_{h_T}=\ell_0\omega_T$. We also need the following lemma.

\begin{lemm}
\label{lemm:partial-6}
For any $\bar \epsilon \,>\,0$, there is a smooth function $\gamma_{\bar\epsilon}$ on ${\cal C}_p$ satisfying:

\vskip 0.1in
\noindent
{\rm(1)} $\gamma_{\bar\epsilon }(y)\,=\,1$ if $d(y,{\cal S}_p)\,\ge\,\bar\epsilon$;

\vskip 0.1in
\noindent
{\rm(2)} $0\,\le\,\gamma_{\bar\epsilon} \,\le\,1$ and $\gamma_{\bar\epsilon} (y)\,=\,0$ in an neighborhood of ${\cal S}_p$;

\vskip 0.1in
\noindent
{\rm(3)} $|\nabla \gamma_{\bar\epsilon}|\,\le\, C$ for some constant $C\,=\,C(\bar\epsilon)$ and
$$\int_{B_{{\bar\epsilon}^{-1}}(o,\omega_p)} \,|\nabla \gamma_{\bar\epsilon}|^2\, \omega_p^n\,\le\,\bar\epsilon.$$
\end{lemm}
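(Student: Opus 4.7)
The plan is to construct $\gamma_{\bar\epsilon}$ as a logarithmic cutoff of the distance function to $\mathcal{S}_p$ on $\mathcal{C}_p$, leveraging the real codimension-two bound on $\mathcal{S}_p$ supplied by the Cheeger-Colding-Tian regularity theory (${\bf T}_1$). The only geometric input I will use is a Minkowski-type content estimate inside the fixed ball $B_{\bar\epsilon^{-1}}(o,\omega_p)$: the $r$-tubular neighborhood of $\mathcal{S}_p$ has $\omega_p$-volume bounded by $C(\bar\epsilon)\,r^2$ for all $r\in(0,\bar\epsilon)$. This is standard on noncollapsed Ricci-flat limit cones and follows from the Hausdorff dimension estimate on the singular set.

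For a small parameter $\delta\in(0,\bar\epsilon)$ to be chosen, I would take a smooth nondecreasing function $\chi_\delta:[0,\infty)\to[0,1]$ that vanishes on $[0,\delta]$, equals $1$ on $[\bar\epsilon,\infty)$, and interpolates logarithmically on $[\delta,\bar\epsilon]$ so that $|\chi_\delta'(t)|\le C_0/(t\log(\bar\epsilon/\delta))$. Setting $\gamma_{\bar\epsilon}(y)=\chi_\delta(d(y,\mathcal{S}_p))$ (after mild smoothing of $d(\cdot,\mathcal{S}_p)$ on $\mathcal{C}_p\setminus\mathcal{S}_p$, where the cone metric is smooth Ricci-flat K\"ahler) yields properties (1) and (2) immediately; the pointwise bound $|\nabla\gamma_{\bar\epsilon}|\le C(\bar\epsilon)$ in (3), with $C(\bar\epsilon)\sim 1/(\delta\log(\bar\epsilon/\delta))$, is also automatic and is allowed to depend on $\bar\epsilon$.

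For the $L^2$ estimate I would decompose the transition region into dyadic annuli $A_k=\{2^k\delta\le d(\cdot,\mathcal{S}_p)\le 2^{k+1}\delta\}$ for $k=0,\ldots,K$ with $K\sim\log_2(\bar\epsilon/\delta)$. On $A_k$ the gradient bound gives $|\nabla\gamma_{\bar\epsilon}|^2\le C_0^2\,(2^k\delta)^{-2}(\log(\bar\epsilon/\delta))^{-2}$, while the Minkowski estimate gives $\vol(A_k\cap B_{\bar\epsilon^{-1}}(o))\le C(\bar\epsilon)(2^{k+1}\delta)^2$, so each annulus contributes $O((\log(\bar\epsilon/\delta))^{-2})$. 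Summing the $\sim\log_2(\bar\epsilon/\delta)$ contributions,
$$
\int_{B_{\bar\epsilon^{-1}}(o,\omega_p)}|\nabla\gamma_{\bar\epsilon}|^2\,\omega_p^n\le\frac{C(\bar\epsilon)}{\log(\bar\epsilon/\delta)},
$$
which is $\le\bar\epsilon$ once $\delta$ is taken small enough depending on $\bar\epsilon$. The main technical obstacle is justifying the Minkowski-content estimate on the cone $\mathcal{C}_p$ in usable form, since $d(\cdot,\mathcal{S}_p)$ is only Lipschitz a priori; this is standard for noncollapsed Ricci limits and parallels the construction in \cite[Lemma 6.4]{Ti12} cited in the proof of Lemma 3.20.
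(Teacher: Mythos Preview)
Your logarithmic cutoff construction is the right mechanism once the Minkowski-content estimate is in hand, but that estimate is precisely the crux of the lemma, and it does \emph{not} follow from the Hausdorff-dimension bound on $\mathcal{S}_p$ as you assert. A bound on Hausdorff dimension does not in general control the volume of tubular neighborhoods; sets of Hausdorff dimension zero can have full Minkowski dimension. In the present setting $\omega_t$ has Ricci curvature bounded only from below, so the Cheeger--Colding--Tian codimension-four result does not apply and $\mathcal{S}_p$ may genuinely be of real codimension~$2$. No direct Minkowski-type bound for the codimension-$2$ stratum of such cones was available in the framework used here, so invoking it as ``standard'' is not justified. Your appeal to \cite[Lemma~6.4]{Ti12} is also misplaced: that lemma builds cutoffs on the global limit $M_T$ from a bounded plurisubharmonic potential, which is a different mechanism and a different space.

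The paper's argument is structurally different and is designed exactly to circumvent this gap. One first proves the lemma in the two easy cases: (a) $\mathcal{S}_p$ of codimension at least~$4$, where the required volume bound on tubes is available, and (b) $\mathcal{C}_p=\CC^{n-1}\times\mathcal{C}_p'$ with $\mathcal{C}_p'$ a flat $2$-cone, where $\mathcal{S}_p$ is an explicit smooth $(n-1)$-plane. These cases already suffice to run the peak-section construction of Step~2 and obtain a partial $C^0$ estimate, which in turn forces the codimension-$2$ part of $\mathcal{S}_p$ to be a local analytic set arising as a limit of the divisor $\mathcal{S}_M$. Only after this recursion does one obtain the tube-volume bound your argument presupposes; the details are in Appendix~A of \cite{Ti12}. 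Without that bootstrap, your proof is incomplete at exactly the step you flag as the ``main technical obstacle''.
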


This is exactly Lemma 5.8 of \cite{Ti12}. It holds trivially for two simple cases: (a) ${\cal S}_p$ is of codimension at least $4$ and (b) the tangent cone ${\cal C}_p$ splits as $\CC^{n-1}\times {\cal C}_p'$ where ${\cal C}_p'$ is a 2-dimensional flat cone. The general case when ${\cal S}_p$ is of codimension $2$ can be proved by recursion based on the construction of peak holomorphic section in Step 2 below; see Appendix A of \cite{Ti12}. If $x\in{\cal C}_p$ has an iterated tangent cone of form $\CC^{n-1}\times{\cal C}_x'$, then the singular set around $x$ is a locally analytical set modeled by taking the limit of the ample locus $M_{\amp}$ on the original manifold.

Assuming the two lemmas, one can find for any $\bar{\epsilon}$ one $0<\epsilon=\epsilon(\bar{\epsilon})<\bar{\epsilon}$ such that $\supp(\gamma_{\bar{\epsilon}})\subset V(p;\epsilon)$ and then one $j=j(\bar{\epsilon})$ satisfying Lemma \ref{lemm:partial-5} for $\epsilon$. Then $\tau=\psi_j(\gamma_{\bar{\epsilon}} 1)$ extends to a smooth section of $L_T^{k_j}$ on $M_T$ which satisfies: (4) $\tau$ is holomorphic on $\phi_j(V(p;\bar{\epsilon}))$ and (5)
$$\int_{M_T}|\bar{\partial}\tau|_{h_T^{k_j}\otimes k_j\omega_T}^2(k_j\omega_T)^n\le\bar{\epsilon}.$$

\vskip 0.1in
\noindent
Step 2: Existence of holomorphic peak sections on $M$.

Let $p\in M_T$ satisfies $d_T(p,x_T)\le R$. Suppose $p_i\in M$ satisfies $p_i\stackrel{d_{GH}}{\longrightarrow}p$ under the Cheeger-Gromov convergence. By the smooth convergence on the regular set $\phi_j(V(p;\epsilon))$, the approximating holomorphic section $\tau$ on $L_T^{k_j}$ descends to a family of smooth section of $L^{k_j}$ on $M$, denoted $\tau_i$, via a family of smooth maps $f_i:\mathcal{R}\rightarrow M$ representing the Gromov-Hausdorff convergence $(M,\omega_{t_i},x_0)\stackrel{d_{GH}}{\longrightarrow}(M_T,d_T,x_T)$, which satisfies:
\begin{equation}\label{separating: e 0}
\supp(\tau_i)\subset f_i\big(\phi_j(V(p;\epsilon))\big)\subset B_{k_j\omega_{t_i}}(p_i,2\sqrt{k_j}\bar{\epsilon}),
\end{equation}
\begin{equation}\label{separating: e 1}
\bigg||\tau_i|_{h_{t_i}^{k_j}}(x)-e^{-\ell_0d^2_{k_j\omega_{t_i}}(x,p_i)}\bigg|\le\bar{\epsilon},\,\mbox{ on }f_i\big(\phi_j(V(p;\bar{\epsilon}))\big),
\end{equation}
and
\begin{equation}\label{separating: e 2}
C^{-1}\le\int_{M}|\tau_i|_{h_{t_i}^{k_j}\otimes k_j\omega_{t_i}}^2(k_j\omega_{t_i})^n\le C,
\end{equation}
for some constant $C=C(R)$ depending on the volume ration of the tangent cone $\mathcal{C}_p$, and
\begin{equation}\label{separating: e 3}
\int_{M}|\bar{\partial}\tau_i|_{h_{t_i}^{k_j}\otimes k_j\omega_{t_i}}^2(k_j\omega_{t_i})^n\le2\bar{\epsilon},
\end{equation}
for any $i$ sufficiently large. We may assume that
\begin{equation}\label{separating: e 4}
f_i\big(\phi_j(V(p;2\bar{\epsilon}^{\frac{1}{4n}}))\big)\cap B_{k_j\omega_{t_i}}(p_i,4\bar{\epsilon}^{\frac{1}{4n}})\neq\emptyset,
\end{equation}
\begin{equation}\label{separating: e 5}
B_{k_j\omega_{t_i}}(p'_i,\frac{1}{2}\bar{\epsilon}^{\frac{1}{4n}})\subset f_i\big(\phi_j(V(p;\bar{\epsilon}))\big),
\end{equation}
for some $p'_i\in M$ with $d_{k_j\omega_{t_i}}(p_i,q_i)\le2\bar{\epsilon}^{\frac{1}{4n}}$. When $i$ is large enough, $T-t_i\le\frac{1}{2\ell_0k_j}$, so by the $L^2$ estimate in Lemma \ref{L^2 estimate: prop 1}, there exists a smooth section $v_i$ solving $\bar{\partial}v_i=\bar{\partial}\tau_i$ with
\begin{equation}\label{separating: e 6}
\int_M|v_i|^2_{h_{t_i}^{k_j}}(k_j\omega_{t_i})^n\le C\cdot\bar{\epsilon}
\end{equation}
for some $C$ independent of $i$. Noticing that $v_i$ is holomorphic on $B_{k_j\omega_{t_i}}(p'_i,\frac{1}{2}\bar{\epsilon}^{\frac{1}{4n}})$, by the $L^\infty$ estimate in Lemma \ref{infinity estimate},
\begin{equation}\label{separating: e 7}
|v_i|^2_{h_{t_i}^{k_j}}(p'_i)\le C\bar{\epsilon}^{-\frac{1}{2}}\int_{B_{k_j\omega_{t_i}}(q_i,\frac{1}{2}\bar{\epsilon}^{\frac{1}{4n}})}
|v_i|^2_{h_{t_i}^{k_j}}(k_j\omega_{t_i})^n\le C\bar{\epsilon}^{\frac{1}{2}},
\end{equation}
where $C=C(R)$. Therefore, $\sigma_i=\tau_i-v_i$ defines a holomorphic section of $L^k$ satisfying
\begin{equation}\label{separating: e 8}
|\sigma_i|_{h_{t_i}^{k_j}}(p'_i)\ge e^{-\ell_0d^2_{k_j\omega_{t_i}}(p_i,p'_i)}-\bar{\epsilon}-C\bar{\epsilon}^{\frac{1}{4}}\ge\frac{1}{2}
\end{equation}
once $\bar{\epsilon}=\epsilon(p)$ is chosen sufficiently small. On the other hand, $\sigma_i=v_i$ outside  $f_i\big(\phi_j(V(p;\epsilon))\big)$, a domain satisfying
\begin{equation}\nonumber
\sup_{x\in f_i\big(\phi_j(V(p;\epsilon))\big)}d_{k_j\omega_{t_i}}(p_i,x)\le\sup_{x\in \phi_j(V(p;\epsilon))}d_{k_j d_T}(p,x)+1\le\epsilon^{-1}+2.
\end{equation}
Therefore, for any $x\in M$ with $\epsilon^{-1}+3 \le d_{k_j\omega_{t_i}}(x,p_i)\le 2k_j^{\frac{1}{2}}R$, an iteration gives
$$|\sigma_i|_{h_{t_i}^{k_j}}^2(x)\le C\int_{B_{k_j\omega_{t_i}}(x,1)}|\sigma_i|^2_{h_{t_i}^{k_j}}(k_j\omega_{t_i})^n\le C\cdot\bar{\epsilon}$$
where $C=C(R)$. Noticing that $k_j=r_j^{-2}$ and $r_j\le\epsilon^2$, we conclude
\begin{equation}\label{separating: e 10}
|\sigma_i|_{h_{t_i}^{k_i}}\le C\bar{\epsilon}^{\frac{1}{2}},\,\mbox{ on }B_{\omega_{t_i}}(x,2R)\backslash B_{\omega_{t_i}}(p_i,2\epsilon).
\end{equation}
Besides (\ref{separating: e 8}) and (\ref{separating: e 10}) the section $\sigma_i$ also satisfies
\begin{equation}\label{separating: e 11}
C^{-1}\le\int_{B_{\omega_{t_i}}(p_i,2\bar{\epsilon})}|\sigma_i|^2_{h_{t_i}^{k_j}}(k_j\omega_{t_i})^n\le C
\end{equation}
and
\begin{equation}\label{separating: e 12}
\int_{M\backslash B_{\omega_{t_i}}(p_i,2\bar{\epsilon})}|\sigma_i|^2_{h_{t_i}^{k_j}}(k_j\omega_{t_i})^n\le C\cdot\bar{\epsilon}
\end{equation}
for some $C=C(R)$ independent of the specified $i$, $j$ and $\varsigma$.

Passing to a subsequence if necessary, the sequence of points $p'_i$ converge to a point $p'\in\mathcal{R}$ with $d_T(p,p')\le2k_j^{-\frac{1}{2}}\bar{\epsilon}^{\frac{1}{4n}}$, the sections $\sigma_i\in H^0(M;L^k)$ converges to a holomorphic section $\sigma_\infty\in H^0(\mathcal{R};L_T^k)$ such that
\begin{equation}\label{separating: e 21}
|\sigma_\infty|_{h_T^{k_j}}(p')\ge \frac{1}{2},
\end{equation}
\begin{equation}\label{separating: e 22}
|\sigma_\infty|_{h_T^{k_i}}\le C\cdot\bar{\epsilon}^{\frac{1}{2}},\,\mbox{ on }B_{d_T}(x_T,2R)\backslash B_{d_T}(p,2\epsilon),
\end{equation}
\begin{equation}\label{separating: e 23}
C^{-1}\le\int_{B_{d_T}(p,2\bar{\epsilon})\cap\mathcal{R}}|\sigma_\infty|_{h_T^{k_j}}^2(k_j\bar{\omega}_T)^n\le C
\end{equation}
and
\begin{equation}\label{separating: e 24}
\int_{\mathcal{R}\backslash B_{d_T}(p,2\bar{\epsilon})}|\sigma_\infty|_{h_T^{k_j}}^2(k_j\bar{\omega}_T)^n\le C\cdot\bar{\epsilon}
\end{equation}
for some $C=C(R)$. By the gradient estimate in Proposition \ref{gradient estimate: prop 2} we have
\begin{equation}\label{separating: e 25}
|\sigma_\infty|_{h_T^{k_j}}(p)\ge \frac{1}{2}-C\bar{\epsilon}^{\frac{1}{4n}}\ge\frac{1}{4}
\end{equation}
if $\bar{\epsilon}$ is chosen sufficiently small.

\vskip 0.1in
\noindent
Step 3: $\Phi_T$ is injective.

For any $R>0$, $p,q\in B_{d_T}(x_T,R)$, and any $\bar{\epsilon}<< d_T(p,q)$, there is an integer $k=k(p,q)$ and sections $\sigma_{p}\in H^0(\mathcal{R};L_T^{k})$, $\sigma_{q}\in H^0(\mathcal{R};L_T^{k})$ such that (\ref{separating: e 22})-(\ref{separating: e 25}) hold respectively at $p_i,q_i$, for $C=C(R)$. Notice that by Schwarz inequality,
\begin{align}\nonumber
\bigg|\int_{\mathcal{R}}\langle\sigma_{p},\bar{\sigma}_{q}\rangle_{h_{T}^k}\bar{\omega}_{T}^n\bigg|
&\le\int_{\mathcal{R}\backslash B_{d_T}(q,2\bar{\epsilon})}|\sigma_{p}||\sigma_{q}|\bar{\omega}_{T}^n
+\int_{\mathcal{R}\backslash B_{d_T}(p,2\bar{\epsilon})}|\sigma_{p}||\sigma_{q}|\bar{\omega}_{T}^n\\
&\le Ck^{-\frac{n}{2}}\bigg(\big(\int_{\mathcal{R}\backslash B_{d_T}(q,2\bar{\epsilon})}|\sigma_{q}|^2\bar{\omega}_{T}^n\big)^{\frac{1}{2}}
+\big(\int_{\mathcal{R}\backslash B_{d_T}(p,2\bar{\epsilon})}|\sigma_{p}|^2\bar{\omega}_{T}^n\big)^{\frac{1}{2}}\bigg)
\nonumber\\
&\le Ck^{-n}\bar{\epsilon}^{\frac{1}{2}}.\nonumber
\end{align}
Let $\tilde{\sigma}_{p}$ and $\tilde{\sigma}_{q}$ respectively be the unit normalization of $\sigma_{p}$ and $\sigma_{q}-\langle \sigma_{p},\bar{\sigma}_{q}\rangle_{h_{t}^k}\sigma_{q}$, then $\tilde{\sigma}_{p}$ is orthogonal to $\tilde{\sigma}_{q}$, and
$$|\tilde{\sigma}_{p}|_{h_{T}^k}(p)\ge \frac{1}{4},\, |\tilde{\sigma}_{p}|_{h_{T}^k}(q)\le C\bar{\epsilon}^{\frac{1}{2}},$$
$$|\tilde{\sigma}_{q}|_{h_{T}^k}(p)\le C\bar{\epsilon}^{\frac{1}{2}},\, |\tilde{\sigma}_{q}|_{h_{T}^k}(q)\ge \frac{1}{4}-C\bar{\epsilon}^{\frac{1}{2}},$$
where $C=C(R)$. Thus,
$$\bigg|\frac{\tilde{\sigma}_{p}(p)}{\tilde{\sigma}_{q}(p)}\bigg|
\ge\frac{1}{4C\bar{\epsilon}^{\frac{1}{2}}},\mbox{ and, }
\bigg|\frac{\tilde{\sigma}_{p}(q)}{\tilde{\sigma}_{q}(q)}\bigg|
\le\frac{C\bar{\epsilon}^{\frac{1}{2}}}{8}.$$
Denote by $\Phi_{t}^k:M\rightarrow\mathbb{C}P^{N_k}$, $N_k=\dim H^0(M;L^k)-1$, the holomorphic map defined by an orthonormal basis of $(L,h_t^k)$. Let $p_i\rightarrow p$, $q_i\rightarrow q$ under the Gromov-Hausdorff convergence.  Then
$$\lim_{t_i\rightarrow T}d_{FS}(\Phi_{t_i}^k(p_i),\Phi_{t_i}^k(q_i))\ge C_5^{-1}$$
for some $C_5=C_5(R)>0$. By the uniform equivalence of $h_t$, we have
$$\lim_{t_i\rightarrow T}d_{FS}(\Phi_{0}^k(p_i),\Phi_{0}^k(q_i))\ge C_6^{-1}$$
for some $C_6=C_6(R,k)>0$. Then applying the effective version of the finite generation of the canonical ring $\bigoplus_{k\ge 0}H^0(M;L^k)$, see the discussion in \S 3.1, we have
$$\lim_{t_i\rightarrow T}d_{FS}(\Phi(p_i),\Phi(q_i))\ge C_7^{-1}$$
for some $C_7=C_7(R,k)=C_7(R,p,q)>0$. It means that
$$d_{FS}(\Phi_T(p),\Phi_T(q))\ge C_7^{-1}.$$
Therefore, the map $\Phi_T$ is an injection.

\vskip 0.1in
\noindent
Step 4: $\Phi_T$ is a local homeomorphism.

By the discussion in Step 3, together with the relative $C^0$ estimate in Proposition \ref{gradient estimate: prop 1}, for any $p\in B(x_T,R)$ and $q$ with $d_T(p,q)=1$, there exists $\delta(p,q)>0$ and $r(p,q)>0$ such that
$$d_{FS}(\Phi(x),\Phi(p))\ge\delta(p,q),\,\forall x\in B(q,r(p,q)).$$
Since $\partial B(p,1)$ is compact, we can find a uniform $\delta>0$ such that
\begin{equation}
d_{FS}(\Phi(q),\Phi(p))\ge\delta,\,\forall q\in\partial B(p,1).
\end{equation}
It follows that $\Phi$ is an open map. Since $\Phi$ is also an injection, it must be a local homeomorphism.



\section{Metric structure of the limit space}

Let $(M,\omega_t)$ be the solution to the continuity equation (\ref{M-A: continuity}) and $L$ be the limit line bundle defined in the previous section.

\subsection{Diameter bound of the singular K\"ahler metric}

In \cite{So14}, Song developed a method to prove the diameter bound of a singular K\"ahler-Einstein metric. In this subsection, we follow his idea (see also \cite{Gu15}) to show the diameter bound of $(M\backslash D,\omega_T)$ where $D$ is any divisor such that $[\omega_0]-Tc_1(M)-\epsilon[D]>0$ for some $\epsilon>0$. A bit difference is that the metric $\omega_T$ is a twisted K\"ahler-Einstein metric.

More precisely, let $p\in D$ be any point, $\pi:\widetilde{M}\rightarrow M$
be the blow-up at $p$ with exceptional divisor $\pi^{-1}(p)=E$. Then
$$K_{\widetilde{M}}=\pi^*K_M+(n-1)E.$$
Let $h_E$ be the Hermitian metric on $L_E$, the associated line bundle of $E$, and $\sigma_E$ be a defining section. We denote by $\widetilde{D}=\sum a_i\widetilde{D}_i$ where $\widetilde{D}_i=\pi^{-1}D_i$ is the total transformation and $h_{\widetilde{D}}=\pi^*h_D$, Hermitian metrics on $L_{\widetilde{D}}$. Let $\chi$ be a fixed K\"ahler metric on $\widetilde{M}$. By the calculation in \cite[Page 186]{GrHa}, see also \cite{SoWe13}, the metric $h_E$ can be chosen such that
\begin{equation}\nonumber
\pi^*\eta_T+\delta_0\sqrt{-1}\partial\bar{\partial}\log\|\sigma_E\|_{h_E}^2\ge\delta_1\chi
\end{equation}
for some small constants $\delta_0,\delta_1>0$. Observe that
\begin{equation}\nonumber
\widetilde{\Omega}=\|\sigma_E\|_{h_E}^{-2(n-1)}\pi^*\Omega
\end{equation}
defines a smooth volume form on $\widetilde{M}$. Consider the following family of Monge-Amp\`ere equations on $\widetilde{M}$, for $0<\epsilon\le\frac{1}{2}\delta_0$,
\begin{equation}\label{M-A: epsilon}
\big(\pi^*\eta_T+\epsilon\chi
+\sqrt{-1}\partial\bar{\partial}\tilde{\varphi}_\epsilon\big)^n
=e^{\frac{1}{T}\tilde{\varphi}_\epsilon}\big(\|\sigma_E\|_{h_E}^{2}+\epsilon^2\big)^{n-1}
\widetilde{\Omega}.
\end{equation}
By Yau's solution to Calabi problem \cite{Ya78}, the equation has a unique smooth solution $\tilde{\varphi}_\epsilon$,  for all $0<\epsilon\le\frac{1}{2}\delta_0$; moreover,
\begin{equation}
\tilde{\omega}_\epsilon=:\pi^*\eta_T+\epsilon\chi
+\sqrt{-1}\partial\bar{\partial}\tilde{\varphi}_\epsilon
\end{equation}
is a smooth K\"ahler metric on $\widetilde{M}$.

\begin{lemm}
There exists $C$ independent of $\epsilon$ such that
\begin{equation}\label{C^0 estimate: 1}
-C\le\tilde{\varphi}_\epsilon\le C-(n-1)T\log\big(\|\sigma_E\|_{h_E}^2+\epsilon^2\big).
\end{equation}
\end{lemm}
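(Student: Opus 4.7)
I would prove the two bounds separately: the upper bound by maximum principle, the lower bound by a Ko\l odziej-type $L^\infty$ estimate.

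For the upper bound, set $\psi_\epsilon := \tilde{\varphi}_\epsilon + (n-1)T\log(\|\sigma_E\|_{h_E}^2+\epsilon^2)$. Writing $h = \|\sigma_E\|_{h_E}^2$, a direct computation using $\sqrt{-1}\partial\bar\partial \log h = -\Theta_{h_E}$ on $\{h > 0\}$ yields
$$\sqrt{-1}\partial\bar\partial\log(h+\epsilon^2) = -\frac{h}{h+\epsilon^2}\Theta_{h_E} + \frac{\epsilon^2\,\sqrt{-1}\partial h\wedge\bar\partial h}{h(h+\epsilon^2)^2},$$
whose second term is a non-negative $(1,1)$-form and whose first term is bounded in absolute value by a fixed K\"ahler form $\chi$. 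Consequently $-\sqrt{-1}\partial\bar\partial\log(h+\epsilon^2) \le C\chi$ uniformly in $\epsilon$ (the inequality extending smoothly across $E$). At a maximum $x_0$ of $\psi_\epsilon$ on $\widetilde{M}$, the condition $\sqrt{-1}\partial\bar\partial\psi_\epsilon(x_0)\le 0$ forces
$$\tilde{\omega}_\epsilon(x_0) \,\le\, \pi^*\eta_T + \epsilon\chi -(n-1)T\sqrt{-1}\partial\bar\partial\log(h+\epsilon^2)(x_0) \,\le\, C'\chi,$$
hence $\tilde{\omega}_\epsilon^n(x_0) \le C''\tilde{\Omega}(x_0)$ since $\chi^n/\tilde{\Omega}$ is a bounded positive function on $\widetilde{M}$. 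Comparing with (\ref{M-A: epsilon}) at $x_0$ yields $\psi_\epsilon(x_0) \le T\log C''$, which is the asserted upper bound everywhere on $\widetilde{M}$.

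For the lower bound, the upper bound just proved implies $F_\epsilon := e^{\tilde{\varphi}_\epsilon/T}(h+\epsilon^2)^{n-1} \le e^{C/T}$ pointwise, so (\ref{M-A: epsilon}) takes the form $\tilde{\omega}_\epsilon^n = F_\epsilon\tilde{\Omega}$ with uniformly bounded density in the K\"ahler class $[\pi^*\eta_T+\epsilon\chi]$, which degenerates to the big nef class $[\pi^*\eta_T]$. I would invoke the $L^\infty$/capacity estimate of Ko\l odziej \cite{Ko98}, in the form for big semi-positive classes of Eyssidieux--Guedj--Zeriahi \cite{EyGuZe09} (see also \cite{Zh06}), to produce a uniform oscillation bound $\osc_{\widetilde{M}}\tilde{\varphi}_\epsilon \le C_1$. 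To pin down the normalization, integrate (\ref{M-A: epsilon}):
$$\int_{\widetilde{M}} e^{\tilde{\varphi}_\epsilon/T}(h+\epsilon^2)^{n-1}\tilde{\Omega} \,=\, [\pi^*\eta_T+\epsilon\chi]^n,$$
whose right-hand side is uniformly bounded below by $[\pi^*\eta_T]^n > 0$ (bigness of the limit class). The weight $\int_{\widetilde{M}}(h+\epsilon^2)^{n-1}\tilde{\Omega}$ is uniformly comparable to $\int_{\widetilde{M}}h^{n-1}\tilde{\Omega} = \int_M\Omega > 0$, a fixed positive constant. Combining the integral identity with the oscillation bound pins $\tilde{\varphi}_\epsilon$ into a uniformly bounded range; in particular $\tilde{\varphi}_\epsilon \ge -C$.

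The main obstacle is the uniformity of Ko\l odziej's estimate as the K\"ahler class $[\pi^*\eta_T+\epsilon\chi]$ degenerates to a big nef but not K\"ahler class; this is precisely the setting of \cite{EyGuZe09}, where the essential hypothesis is the bigness of the limit class, here guaranteed by $([\omega_0]-Tc_1(M))^n > 0$. The verification that $-\sqrt{-1}\partial\bar\partial\log(h+\epsilon^2)$ admits a uniform upper bound (so that the maximum-principle step survives the degeneration to $E$) is the other point needing care, but it is handled by the explicit Hessian formula above.
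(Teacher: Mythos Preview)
Your upper bound is exactly the paper's argument: rewrite the equation in terms of $\psi_\epsilon=\tilde\varphi_\epsilon+(n-1)T\log(\|\sigma_E\|_{h_E}^2+\epsilon^2)$, use $-\sqrt{-1}\partial\bar\partial\log(\|\sigma_E\|_{h_E}^2+\epsilon^2)\le C\chi$ to bound the reference form from above by $C'\chi$, and apply the maximum principle.

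For the lower bound, however, you take a genuinely different route. The paper obtains it by an \emph{elementary} maximum principle: at a minimum of $\tilde\varphi_\epsilon$ one has $\tilde\omega_\epsilon^n\ge(\pi^*\eta_T+\epsilon\chi)^n$, and hence
\[
e^{\frac{1}{T}\inf\tilde\varphi_\epsilon}\ \ge\ \inf_{\widetilde M}\frac{(\pi^*\eta_T+\epsilon\chi)^n}{(\|\sigma_E\|_{h_E}^2+\epsilon^2)^{n-1}\,\widetilde\Omega}\ \ge\ C^{-1}\inf_{\widetilde M}\frac{\|\sigma_E\|_{h_E}^{2(n-1)}+\epsilon^n}{(\|\sigma_E\|_{h_E}^2+\epsilon^2)^{n-1}},
\]
using that $(\pi^*\eta_T)^n$ is uniformly comparable to $\|\sigma_E\|_{h_E}^{2(n-1)}\chi^n$ and that $\widetilde\Omega$ is a smooth volume form; the last quotient is easily seen to be bounded below uniformly in $\epsilon$. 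This uses nothing beyond the local vanishing order of $\pi^*\eta_T^n$ along $E$.

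Your argument instead feeds the uniform bound on $F_\epsilon$ into the Ko\l odziej/Eyssidieux--Guedj--Zeriahi $L^\infty$ estimate for degenerating big classes, obtains an oscillation bound, and then normalizes via the total volume. This is correct, but it is heavier machinery than the lemma requires, and in fact it is precisely the mechanism the paper deploys in the \emph{next} lemma to upgrade to the full $\|\tilde\varphi_\epsilon\|_{C^0}\le C$. So your proof effectively merges the two lemmas; the paper's separation has the virtue that the present lemma stays completely elementary, while the pluripotential input is isolated in the subsequent step.
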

\begin{proof}
The proof uses simply the maximum principle. To get the upper bound, we rewrite the Monge-Amp\`ere equation (\ref{M-A: epsilon}) as
\begin{eqnarray}\nonumber
\big(\theta+\sqrt{-1}\partial\bar{\partial}(\tilde{\varphi}_\epsilon+f_\epsilon)\big)^n=e^{\frac{1}{T}
(\tilde{\varphi}_\epsilon+f_\epsilon)}\widetilde{\Omega},\,\mbox{ on }\widetilde{M}
\end{eqnarray}
where $f_\epsilon=(n-1)T\log\big(\|\sigma_E\|_{h_E}^2+\epsilon^2\big)$ and $\theta=\pi^*\eta_T+\epsilon\chi
-\sqrt{-1}\partial\bar{\partial}f_\epsilon$. By direct calculation, cf. \cite[Lemma 4.9]{So14},
\begin{equation}\nonumber
\sqrt{-1}\partial\bar{\partial}\log\big(\|\sigma_E\|_{h_E}^{2}+\epsilon^2\big)^{n-1}\ge-C\chi.
\end{equation}
Therefore, $\theta\le C\chi$. By maximum principle we get $\sup(\tilde{\varphi}_\epsilon+f_\epsilon)\le C$. To get the lower bound we recall that
$$C^{-1}\|\sigma_E\|_{h_E}^{2(n-1)}\chi^n\le\big(\pi^*\eta_T\big)^n\le C\|\sigma_E\|_{h_E}^{2(n-1)}\chi^n$$
for some $C>0$ depending on the choice of $\sigma_E$, $h_E$, $\hat{\eta}_T$ and $\chi$. By maximum principle,
$$e^{\frac{1}{T}\inf\tilde{\varphi}_\epsilon}\ge\inf\frac{\big(\pi^*\eta_T+\epsilon\chi\big)^n}
{\big(\|\sigma_E\|_{h_E}^{2}+\epsilon^2\big)^{n-1}\widetilde{\Omega}}
\ge C^{-1}\inf\frac{\|\sigma_E\|_{h_E}^{2(n-1)}+\epsilon^n}{\big(\|\sigma_E\|_{h_E}^{2}+\epsilon^2\big)^{n-1}}.$$
The right hand side has a uniform lower bound when $0<\epsilon\le \frac{1}{2}\delta_0$.
\end{proof}

\begin{lemm}
There exist $\lambda$ and $C$ independent of $\epsilon$ such that
\begin{equation}
\|\tilde{\varphi}_\epsilon\|_{C^0}\le C
\end{equation}
and
\begin{equation}
\tilde{\omega}_\epsilon\le\frac{C}{\|\sigma_E\|_{h_E}^{2\lambda}\|\tilde{\sigma}\|^{2\lambda}}\chi.
\end{equation}
Moreover, for any compact subset $K\subset\widetilde{M}\backslash\widetilde{D}$ and integer $k$ there exists $C_{K,k}$ independent of $\epsilon$ such that
\begin{equation}
\|\tilde{\omega}_\epsilon\|_{C^k(K)}\le C_{K,k}.
\end{equation}
\end{lemm}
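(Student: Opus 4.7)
My plan is to first establish the uniform $C^0$ bound, then use it to run a Yau-type Schwarz/$C^2$ argument for the metric upper bound, and finally bootstrap to higher derivatives on compact subsets of $\tilde M\setminus\tilde D$. The previous lemma already provides $\tilde\varphi_\epsilon\ge -C$, so I only need to refine the upper bound, which there is singular along $E$. The key idea is to compare $\tilde\varphi_\epsilon$ with the pullback of the limit potential $u_T$ on $M$; by the $C^0$ estimate of Section~2, $\|\pi^*u_T\|_{C^0}\le C$. Setting $H_\epsilon=\tilde\varphi_\epsilon-\pi^*u_T$ and dividing (\ref{M-A: epsilon}) by the pulled-back identity $(\pi^*\omega_T)^n=e^{\pi^*u_T/T}\|\sigma_E\|_{h_E}^{2(n-1)}\tilde\Omega$ gives
$$\frac{(\pi^*\omega_T+\epsilon\chi+\sqrt{-1}\partial\bar\partial H_\epsilon)^n}{(\pi^*\omega_T)^n}=e^{H_\epsilon/T}\frac{(\|\sigma_E\|_{h_E}^2+\epsilon^2)^{n-1}}{\|\sigma_E\|_{h_E}^{2(n-1)}}\ge e^{H_\epsilon/T}.$$
Applying the maximum principle to the barrier-modified function $H_\epsilon+\delta\log\|\sigma_E\|_{h_E}^2+\delta\log\|\tilde\sigma\|^2$ with small $\delta>0$ forces the max into $\tilde M\setminus(E\cup\tilde D)$, where the degeneracy locus $\pi^{-1}(\mathcal{S}_M)\subset\tilde D$ is also avoided and the right-hand side above is locally bounded; letting $\delta\to 0$ and using continuity of $H_\epsilon$ yields $\sup H_\epsilon\le C$ uniformly in $\epsilon$.

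For the metric upper bound I would run the Chern-Lu/Schwarz argument. Differentiating (\ref{M-A: epsilon}) gives
$$\Ric(\tilde\omega_\epsilon)=-\frac{1}{T}\tilde\omega_\epsilon+\frac{1}{T}(\pi^*\eta_T+\epsilon\chi)-(n-1)\sqrt{-1}\partial\bar\partial\log(\|\sigma_E\|_{h_E}^2+\epsilon^2)+\Ric(\tilde\Omega),$$
so $\Ric(\tilde\omega_\epsilon)\ge -C\tilde\omega_\epsilon-C\chi$ modulo a term whose only singularity as $\epsilon\to 0$ is concentrated near $E$ and absorbable by a barrier. Computing $\Delta_{\tilde\omega_\epsilon}\log\tr_{\tilde\omega_\epsilon}\chi$ in the usual way and then applying the maximum principle to
$$\log\tr_{\tilde\omega_\epsilon}\chi-A\tilde\varphi_\epsilon+\lambda\log\|\sigma_E\|_{h_E}^2+\lambda\log\|\tilde\sigma\|^2$$
with $A$ large and $\lambda>0$ small, the last two barriers push the max into $\tilde M\setminus(E\cup\tilde D)$; there, together with the $C^0$ bound just established, one derives $\tr_{\tilde\omega_\epsilon}\chi\le C$ at the max. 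Rearranging gives the pointwise bound $\tilde\omega_\epsilon\le C\|\sigma_E\|_{h_E}^{-2\lambda}\|\tilde\sigma\|^{-2\lambda}\chi$.

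On any compact $K\Subset\tilde M\setminus\tilde D$ the lower bounds on $\|\sigma_E\|_{h_E}$ and $\|\tilde\sigma\|$ turn this into a uniform $\tilde\omega_\epsilon\le C_K\chi$, and combined with the $C^0$ bound on $\tilde\varphi_\epsilon$ the Monge-Amp\`ere equation (\ref{M-A: epsilon}) gives a matching lower bound $\tilde\omega_\epsilon\ge C_K^{-1}\chi$; Calabi's $C^3$ estimate followed by standard Schauder-type bootstrap then yields $\|\tilde\omega_\epsilon\|_{C^k(K)}\le C_{K,k}$. The main obstacle is the uniform $C^0$ upper bound: the degeneracy of $\pi^*\omega_T$ along $\pi^{-1}(\mathcal{S}_M)\subset\tilde D$ forces the barrier strategy above, and if that turns out to be delicate one can instead invoke the Ko{\l}odziej / Eyssidieux-Guedj-Zeriahi $L^\infty$ estimate for big classes, using that $F_\epsilon=(\|\sigma_E\|_{h_E}^2+\epsilon^2)^{n-1}\tilde\Omega$ is uniformly in $L^\infty$ while the class $[\pi^*\eta_T+\epsilon\chi]$ remains uniformly nef and big as $\epsilon\to 0$.
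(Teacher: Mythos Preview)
Your primary route to the $C^0$ upper bound via comparison with $\pi^*u_T$ has a genuine gap. At the maximum of $H_\epsilon+\delta\log\|\sigma_E\|^2+\delta\log\|\tilde\sigma\|^2$ you obtain
\[
e^{H_\epsilon/T}\le\frac{\bigl(\pi^*\omega_T+(\epsilon+C\delta)\chi\bigr)^n}{(\pi^*\omega_T)^n}
\]
at that point, but this ratio is controlled only if $\pi^*\omega_T$ is bounded below by a fixed multiple of $\chi$ there. Nothing prevents the maximum from drifting toward $\pi^{-1}(\mathcal S_M)$ as $\delta\to 0$, where $\omega_T$ degenerates and the ratio blows up; the barrier keeps the max strictly off $\tilde D$ for each $\delta>0$ but does not pin it to a fixed compact set uniformly in $\delta$. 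So letting $\delta\to 0$ does not yield a uniform bound.

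Your fallback is the paper's actual argument, but you state it imprecisely. Saying ``$F_\epsilon\in L^\infty$'' is not enough: the Monge--Amp\`ere measure is $e^{\tilde\varphi_\epsilon/T}F_\epsilon$, and Ko{\l}odziej/Zhang/EGZ need this to be uniformly in some $L^p$. The paper's point is that the \emph{singular} upper bound from the previous lemma,
\[
\tilde\varphi_\epsilon\le C-(n-1)T\log\bigl(\|\sigma_E\|_{h_E}^2+\epsilon^2\bigr),
\]
is exactly what cancels the factor $(\|\sigma_E\|_{h_E}^2+\epsilon^2)^{n-1}$ in $F_\epsilon$, giving $\tilde\omega_\epsilon^n\le C\tilde\Omega\le C\chi^n$ uniformly. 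With the volume form thus controlled and the class $[\pi^*\eta_T+\epsilon\chi]$ uniformly semipositive and big, \cite{Zh06} or \cite{EyGuZe09} give a uniform oscillation bound, and combined with the uniform lower bound $\tilde\varphi_\epsilon\ge -C$ already established, the uniform upper bound follows.

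Your Schwarz/Chern--Lu argument for the metric upper bound with the barrier $\lambda\log\|\sigma_E\|^2+\lambda\log\|\tilde\sigma\|^2$ and the subsequent Calabi/Schauder bootstrap on compact $K\Subset\tilde M\setminus\tilde D$ are correct and are precisely what the paper means by ``standard; see \cite{So14}''.
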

\begin{proof}
The upper bound of $\tilde{\varphi}_\epsilon$ implies that $\tilde{\omega}^n_\epsilon\le C\chi^n$ for some $C$ independent of $\epsilon$. The $C^0$ estimate of $\tilde{\varphi}_\epsilon$ follows from \cite{Zh06} or \cite{EyGuZe09}. The proof of the next two estimates are standard; see \cite{So14} for details.
\end{proof}

Furthermore, as in \cite{So14}, the estimate of $\tilde{\omega}_\epsilon$ can be improved in the "normal direction" along the proper transformation of $D$, say $F$, which is the closure of $\pi^{-1}(D\backslash\{p\})$. Locally, let $B$ be a disk centered at $p$ and $\widetilde{B}=\pi^{-1}(B)$. Let $f_1,\cdots,f_N$ be the defining functions of the divisor $F$ in $\widetilde{B}$. Then, totally as in \cite{So14}, we can prove

\begin{prop}
There exists $\delta>0$, $\lambda>0$ and $C>0$ independent of $\epsilon$ such that
\begin{equation}
\tilde{\omega}_\epsilon\le\frac{C}{\|\sigma_E\|_{h_E}^{2(1-\delta)}\prod|f_i|^{2\lambda}}\chi,\,\mbox{ in }\widetilde{B}.
\end{equation}
\end{prop}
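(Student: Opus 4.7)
The plan is to adapt Song's barrier argument from \cite{So14} (see also \cite{Gu15}). I would apply the maximum principle to an auxiliary quantity of the form
\begin{equation*}
H_\epsilon \,=\, \log \tr_\chi \tilde\omega_\epsilon \,-\, A\tilde\varphi_\epsilon \,+\, (1-\delta)\log\|\sigma_E\|_{h_E}^2 \,+\, \lambda\sum_i \log |f_i|^2
\end{equation*}
on $\widetilde{B}\setminus(E\cup F)$, where $A,\delta,\lambda>0$ are to be tuned. Since $\log\|\sigma_E\|_{h_E}^2$ and $\log|f_i|^2$ tend to $-\infty$ along $E$ and $F$ respectively, $H_\epsilon\to-\infty$ there, so after a standard cutoff against $\partial\widetilde{B}$ the maximum is attained at an interior point $x_0\in\widetilde{B}\setminus(E\cup F)$.

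At $x_0$, I would assemble three ingredients. First, the Chern--Lu inequality $\triangle_{\tilde\omega_\epsilon}\log \tr_\chi \tilde\omega_\epsilon \ge -C_0 \tr_{\tilde\omega_\epsilon}\chi$, with $C_0$ controlled by the bisectional curvature of $\chi$. Second, the identity $\triangle_{\tilde\omega_\epsilon}\tilde\varphi_\epsilon = n - \tr_{\tilde\omega_\epsilon}(\pi^*\eta_T+\epsilon\chi)$ from the Monge--Amp\`ere equation \eqref{M-A: epsilon}, combined with $\pi^*\eta_T+\delta_0\sqrt{-1}\partial\bar\partial\log\|\sigma_E\|_{h_E}^2\ge\delta_1\chi$ to yield $\triangle_{\tilde\omega_\epsilon}\tilde\varphi_\epsilon \le n - \delta_1\tr_{\tilde\omega_\epsilon}\chi + \delta_0\tr_{\tilde\omega_\epsilon}\Theta_{h_E}$. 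Third, the identities $\sqrt{-1}\partial\bar\partial\log\|\sigma_E\|_{h_E}^2=-\Theta_{h_E}$ and $\sqrt{-1}\partial\bar\partial\log|f_i|^2=0$ away from $E\cup F$, which show that the last two terms of $H_\epsilon$ only contribute bounded curvature-type errors in $\triangle_{\tilde\omega_\epsilon} H_\epsilon$.

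Choosing $A$ large compared to $C_0/\delta_1$ and keeping $\delta<\delta_0$, the $\tr_{\tilde\omega_\epsilon}\Theta_{h_E}$ errors are absorbed into $A\delta_1\tr_{\tilde\omega_\epsilon}\chi$, so that at $x_0$
\begin{equation*}
0 \,\ge\, \triangle_{\tilde\omega_\epsilon}H_\epsilon(x_0) \,\ge\, \tr_{\tilde\omega_\epsilon}\chi(x_0) - C_1,
\end{equation*}
and therefore $\tr_{\tilde\omega_\epsilon}\chi(x_0)\le C_1$. Coupling this with the arithmetic-geometric inequality $\tr_\chi\tilde\omega_\epsilon\le C_n(\tr_{\tilde\omega_\epsilon}\chi)^{n-1}\tilde\omega_\epsilon^n/\chi^n$, the pointwise bound $\tilde\omega_\epsilon^n/\chi^n\le C(\|\sigma_E\|^2+\epsilon^2)^{n-1}/\|\sigma_E\|^{2(n-1)}\le C$ that follows from \eqref{M-A: epsilon} and \eqref{C^0 estimate: 1}, and the $C^0$ bound on $\tilde\varphi_\epsilon$, this produces an $\epsilon$-independent upper bound for $H_\epsilon(x_0)$. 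Since $x_0$ is a maximum, $H_\epsilon$ is bounded above on $\widetilde B$; rearranging and exponentiating yields $\tilde\omega_\epsilon\le C\|\sigma_E\|_{h_E}^{-2(1-\delta)}\prod|f_i|^{-2\lambda}\chi$.

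The main obstacle I anticipate is the fine tuning of $\delta$ and $\lambda$ so that the negative contributions from $\Theta_{h_E}$ and from the Hessians that $\chi$ sees along $F$ near $E\cap F$ are dominated uniformly in $\epsilon$ by $A\delta_1\tr_{\tilde\omega_\epsilon}\chi$. This forces $\delta$ to be of order at most $\delta_0$, while $\lambda$ is constrained by the local geometry of the proper transform $F$ meeting $E$; ensuring consistency of all inequalities through the intersection $E\cap F$ and through the boundary cutoff against $\partial\widetilde B$ is the delicate bookkeeping of the argument.
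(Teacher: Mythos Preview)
Your plan is exactly what the paper intends: it offers no proof beyond ``totally as in \cite{So14}, we can prove'' the proposition, so following Song's barrier argument with a test function of the form $\log\tr_\chi\tilde\omega_\epsilon-A\tilde\varphi_\epsilon+(\text{log barriers})$ is the right route.

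Two points in your outline need correction. First, the inequality you label ``Chern--Lu'' is really the Aubin--Yau second-order estimate, and in its honest form it carries the Ricci term
\[
\triangle_{\tilde\omega_\epsilon}\log\tr_\chi\tilde\omega_\epsilon\ \ge\ -B\,\tr_{\tilde\omega_\epsilon}\chi\ +\ \frac{\chi^{i\bar j}R_{i\bar j}(\tilde\omega_\epsilon)}{\tr_\chi\tilde\omega_\epsilon}.
\]
Here $\Ric(\tilde\omega_\epsilon)$ is \emph{not} bounded below uniformly in $\epsilon$: from \eqref{M-A: epsilon} it contains $-(n-1)\sqrt{-1}\partial\bar\partial\log(\|\sigma_E\|_{h_E}^2+\epsilon^2)$, whose positive part blows up like $\epsilon^{-2}$ along $E$. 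Your ``second ingredient'' $\triangle_{\tilde\omega_\epsilon}\tilde\varphi_\epsilon=n-\tr_{\tilde\omega_\epsilon}(\pi^*\eta_T+\epsilon\chi)$ is just the definition of $\tilde\omega_\epsilon$ and does not see the volume form at all, so it cannot by itself neutralise this Ricci contribution. In Song's argument the cancellation is arranged by using the \emph{regularised} barrier $\log(\|\sigma_E\|_{h_E}^2+\epsilon^2)$ (rather than $\log\|\sigma_E\|_{h_E}^2$) with a carefully chosen coefficient; the exponent $1-\delta$ is precisely the residual room after that cancellation. This is the genuine ``fine tuning'' you anticipate, but it lives in the Ricci term, not merely in the $\Theta_{h_E}$ bookkeeping.

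Second, your volume ratio is miswritten: since $\widetilde\Omega=\|\sigma_E\|_{h_E}^{-2(n-1)}\pi^*\Omega$ is already a smooth volume form on $\widetilde M$, the bound is $\tilde\omega_\epsilon^n/\chi^n\le C(\|\sigma_E\|_{h_E}^2+\epsilon^2)^{n-1}\le C$; the intermediate expression you wrote, with an extra $\|\sigma_E\|_{h_E}^{-2(n-1)}$, is unbounded near $E$.
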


\begin{prop}
$\tilde{\omega}_{\epsilon}$ converges to $\pi^*\omega_T$ as $\epsilon\rightarrow 0$ in the current sense. Moreover, the convergence takes place smoothly on $\widetilde{M}\backslash\widetilde{D}$.
\end{prop}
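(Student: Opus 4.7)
My plan is to combine the a priori estimates established in the previous lemma and proposition with the BEGZ uniqueness theorem for bounded solutions of the Monge--Amp\`ere equation in a big cohomology class. First I would invoke the uniform $C^0$ bound $\|\tilde{\varphi}_\epsilon\|_{C^0}\le C$ to extract, along a subsequence $\epsilon_k\to 0$, a weak $L^1(\widetilde{M})$ limit $\tilde{\varphi}_0$ which is bounded and $\pi^*\eta_T$-plurisubharmonic. Correspondingly, $\tilde{\omega}_{\epsilon_k}=\pi^*\eta_T+\epsilon_k\chi+\sqrt{-1}\partial\bar{\partial}\tilde{\varphi}_{\epsilon_k}$ converges weakly as currents to $\tilde{\omega}_0=\pi^*\eta_T+\sqrt{-1}\partial\bar{\partial}\tilde{\varphi}_0$ on $\widetilde{M}$.

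Second, on any compact set $K\subset\widetilde{M}\setminus\widetilde{D}$ the previous lemma supplies uniform $C^k$ bounds, so the convergence $\tilde{\omega}_{\epsilon_k}\to\tilde{\omega}_0$ is smooth there. Since $\|\sigma_E\|_{h_E}^2+\epsilon^2\to\|\sigma_E\|_{h_E}^2$ uniformly on compact subsets of $\widetilde{M}\setminus\widetilde{D}$, I can pass to the limit in \eqref{M-A: epsilon} and obtain
\[
\tilde{\omega}_0^n=e^{\tilde{\varphi}_0/T}\|\sigma_E\|_{h_E}^{2(n-1)}\widetilde{\Omega}\qquad\text{on }\widetilde{M}\setminus\widetilde{D}.
\]
On the other hand, pulling \eqref{M-A: continuity 6} back by $\pi$ and using the identity $\pi^*\Omega=\|\sigma_E\|_{h_E}^{2(n-1)}\widetilde{\Omega}$ shows that $\pi^*u_T$ is a bounded $\pi^*\eta_T$-plurisubharmonic solution of the very same equation on $\widetilde{M}\setminus\widetilde{D}$.

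Third, the class $[\pi^*\eta_T]=\pi^*([\omega_0]+TK_M)$ is big, and the Monge--Amp\`ere equation above is exactly of the type for which the BEGZ uniqueness result (cited in the remark following \eqref{M-A: continuity 6}) guarantees at most one bounded solution. This forces $\tilde{\varphi}_0=\pi^*u_T$, and because the subsequential limit is therefore independent of the chosen subsequence, the full family $\tilde{\omega}_\epsilon$ converges to $\pi^*\omega_T$ in the sense of currents on $\widetilde{M}$, smoothly on $\widetilde{M}\setminus\widetilde{D}$.

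The main delicate point will be the clean application of BEGZ uniqueness: I need to verify that both $\tilde{\varphi}_0$ and $\pi^*u_T$ belong to the appropriate full-mass class for $[\pi^*\eta_T]$, i.e.\ that no portion of the non-pluripolar Monge--Amp\`ere mass is lost on $\widetilde{D}\cup E$. This is where the improved estimate $\tilde{\omega}_\epsilon\le C\|\sigma_E\|_{h_E}^{-2(1-\delta)}\prod|f_i|^{-2\lambda}\chi$ from the previous proposition is essential, since it provides a dominating integrable bound allowing one to upgrade the weak current convergence to convergence of the Monge--Amp\`ere measures tested against continuous functions vanishing suitably on $\widetilde{D}\cup E$, and thereby to match the global limit measure with that produced by $\pi^*u_T$.
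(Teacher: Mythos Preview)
Your proposal is correct and follows essentially the same route as the paper: extract a subsequential limit $\tilde{\varphi}_0$, identify the limiting Monge--Amp\`ere equation, observe that $\pi^*u_T$ solves the same equation, and invoke BEGZ uniqueness (the paper cites the comparison principle \cite[Corollary 2.3]{BEGZ}) to force $\tilde{\varphi}_0=\pi^*u_T$.

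One remark: you flag the improved normal-direction estimate $\tilde{\omega}_\epsilon\le C\|\sigma_E\|_{h_E}^{-2(1-\delta)}\prod|f_i|^{-2\lambda}\chi$ as \emph{essential} for checking the full-mass condition, but this is an unnecessary detour. Both $\tilde{\varphi}_0$ and $\pi^*u_T$ are globally \emph{bounded} $\pi^*\eta_T$-plurisubharmonic functions, and in BEGZ theory bounded potentials automatically lie in the full-mass class $\mathcal{E}(\widetilde{M},\pi^*\eta_T)$; their non-pluripolar Monge--Amp\`ere measures put no mass on the pluripolar set $\widetilde{D}\cup E$, and the total mass equals the volume of the big class. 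The paper accordingly dispenses with the improved estimate entirely at this step, simply noting that both solutions are bounded and both measures have full mass. The normal-direction estimate is used only later (Corollary~\ref{diameter bound: normal estimate}) for the diameter argument, not here.
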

\begin{proof}
It suffices to show that for any sequence $\tilde{\omega}_i=\tilde{\omega}_{\epsilon_i}$ with $\epsilon_i\rightarrow 0$, if $\tilde{\omega}_i\rightarrow\tilde{\omega}_0$ in the current sense, then $\tilde{\omega}_0=\pi^*\omega_T$.

Write $\tilde{\omega}_i=\pi^*\eta_T+\epsilon_i\chi+\sqrt{-1}\partial\bar{\partial}\tilde{\varphi}_i$, then $\tilde{\varphi}_i\rightarrow\tilde{\varphi}_0$ for some $\pi^*\eta_T$-plurisubharmonic function $\tilde{\varphi}_0$ such that $\tilde{\omega}_0=\pi^*\eta_T+\sqrt{-1}\partial\bar{\partial}\tilde{\varphi}_0$. It is trivial that
$$\big(\pi^*\eta_T+\sqrt{-1}\partial\bar{\partial}\tilde{\varphi}_0\big)^n=e^{\frac{\tilde{\varphi}_0}{T}}
\pi^*\Omega.$$
Observe that $\pi^*u_T$ satisfies the same Monge-Amp\`ere equation
$$\big(\pi^*\eta_T+\sqrt{-1}\partial\bar{\partial}\pi^*u_T\big)^n=e^{\frac{\pi^*u_T}{T}}
\pi^*\Omega.$$
Furthermore, both $\tilde{\varphi}_0$ and $\pi^*u_T$ are uniformly bounded, both the volume forms $e^{\frac{\tilde{\varphi}_0}{T}}\pi^*\Omega$ and $e^{\frac{\pi^*u_T}{T}}\pi^*\Omega$ have full measure in the big cohomological class $[\omega_0]+TK_M$. One can use the comparison principle \cite[Corollary 2.3]{BEGZ} to conclude that $\tilde{\varphi}_0=\pi^*u_T$. In other words, $\tilde{\omega}_0=\pi^*\omega_T$.
\end{proof}

\begin{coro}\label{diameter bound: normal estimate}
Assume as above. There exist $\delta>0$, $\lambda>0$ and $C>0$ such that
\begin{equation}
\pi^*\omega_T\le\frac{C}{\|\sigma_E\|_{h_E}^{2(1-\delta)}\prod|f_i|^{2\lambda}}\chi,\,\mbox{ in }\widetilde{B}.
\end{equation}
\end{coro}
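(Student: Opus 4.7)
The plan is to obtain the corollary by passing to the limit $\epsilon \to 0$ in the uniform estimate from the preceding proposition, using that the approximating Monge--Amp\`ere solutions $\tilde{\omega}_\epsilon$ converge to $\pi^*\omega_T$ smoothly away from $\widetilde{D}$. Since the previous two propositions are already in hand, the argument reduces to a limit passage together with a short extension argument across $\widetilde{D}$.

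More concretely, I would proceed as follows. First, fix the local disk $\widetilde{B}$, the defining section $\sigma_E$ of $E$, and the defining functions $f_1,\ldots,f_N$ of the proper transform $F$ as in the setup. The preceding proposition furnishes constants $\delta>0$, $\lambda>0$, $C>0$, independent of $\epsilon\in(0,\tfrac{1}{2}\delta_0]$, such that
\begin{equation*}
\tilde{\omega}_\epsilon \le \frac{C}{\|\sigma_E\|_{h_E}^{2(1-\delta)}\prod_i |f_i|^{2\lambda}}\,\chi \quad \text{on } \widetilde{B}.
\end{equation*}
On $\widetilde{B}\setminus\widetilde{D}$ both $\|\sigma_E\|_{h_E}$ and each $|f_i|$ are strictly positive, so the right-hand side is locally a smooth positive function of $\chi$. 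By the convergence proposition, $\tilde{\omega}_\epsilon\to\pi^*\omega_T$ smoothly on $\widetilde{M}\setminus\widetilde{D}$, and in particular at every point of $\widetilde{B}\setminus\widetilde{D}$. Passing to the pointwise limit in the above inequality at each such point yields
\begin{equation*}
\pi^*\omega_T \le \frac{C}{\|\sigma_E\|_{h_E}^{2(1-\delta)}\prod_i |f_i|^{2\lambda}}\,\chi \quad \text{on } \widetilde{B}\setminus\widetilde{D}.
\end{equation*}

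Finally, to get the stated inequality on all of $\widetilde{B}$, observe that on $\widetilde{D}$ the right-hand side is identically $+\infty$, so the inequality holds trivially there; equivalently, one may interpret the claim as a pointwise bound on $\widetilde{B}\setminus\widetilde{D}$ (which is the only locus where $\omega_T$ itself is defined as a smooth form) together with the already-known fact that $\pi^*\omega_T$ extends as a closed positive current across $\widetilde{D}$. There is no real obstacle here: all the work has been done in the two preceding propositions, namely establishing the uniform Laplacian-type bound on $\tilde{\omega}_\epsilon$ in normal directions along $F$ and the identification of the weak limit with $\pi^*\omega_T$. The only mildly delicate point is that the pointwise bound may a priori be extracted only on $\widetilde{B}\setminus\widetilde{D}$, but since this set is open and dense and the bound is the one we want, the corollary follows.
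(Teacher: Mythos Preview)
Your proposal is correct and matches the paper's intended argument: the corollary is stated in the paper without proof, precisely because it follows immediately by letting $\epsilon\to 0$ in the uniform bound of the preceding proposition and using the smooth convergence $\tilde{\omega}_\epsilon\to\pi^*\omega_T$ on $\widetilde{M}\setminus\widetilde{D}$ established just before. Your remark that the inequality is vacuous on $\widetilde{D}$ is the appropriate way to read the statement.
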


This implies that any point of $D$ is a finite point in the metric completion of $(M\backslash D,\omega_T)$. However, it can not be concluded from this the diameter bound of $(M\backslash D,\omega_T)$ .

From now on we turn to the Gromov-Hausdorff convergence. Let $t_i\rightarrow T$ be a sequence of times and $x_0\in M\backslash\mathcal{S}_M$ such that
\begin{equation}
(M,\omega_{t_i},x_0)\stackrel{d_{GH}}{\longrightarrow}(M_T,d_T,x_T).
\end{equation}
Let $\mathcal{R}$ be the regular set of $M_T$ with a $C^{1,\alpha}$ metric $\bar{\omega}_T$ for any $\alpha>0$ such that $\omega_{t_i}\stackrel{C^{1,\alpha}}{\longrightarrow}\bar{\omega}_T$. Let $\Phi:M\rightarrow \mathbb{C}P^{N}$ be the holomorphic map via an orthonormal basis of $(L,h_0)$ defined in the previous section, where $N=\dim H^0(M;L)-1$. After choosing a subsequence, the map $\Phi_{t_i}=\Phi:(M,\omega_{t_i})\rightarrow(\Phi(M),\omega_{FS})$ converges to a Lipschitz map
$$\Phi_T:(M_T,d_T)\rightarrow(\Phi(M),\omega_{FS})$$
by putting  $\Phi_T(x)=\lim\Phi_{t_i}(x_i)$ where $x_i\in M$ is an sequence satisfying $x_i\stackrel{d_{GH}}{\longrightarrow}x$. Since $M\backslash D\subset M\backslash\mathcal{S}_M\subset M_{\reg}$, the metric $\omega_{t_i}$ converges smoothly to $\omega_T$ on $M\backslash D$. Recall
$$D_T=\{x\in M_T|\exists \,x_i\in D\mbox{ such that }x_i\stackrel{d_{GH}}{\longrightarrow}x.\}.$$
The singular set $\mathcal{S}\subset D_T$; moreover, $(M\backslash D,\omega_T)$ is isometric to $(M_T\backslash D_T,\bar{\omega}_T)$.

\begin{lemm}
$\Phi_T:M_T\backslash D_T\rightarrow \Phi(M\backslash D)$ is a bijection.
\end{lemm}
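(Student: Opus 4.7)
The plan is to combine the isometric identification $(M\setminus D,\omega_T)\cong (M_T\setminus D_T,\bar\omega_T)$ established in Subsection~3.2 with the global injectivity of $\Phi_T$ provided by Proposition \ref{limit map: injectivity}. Recall that this isometry is induced by the identity map, and that the identity map represents the Gromov-Hausdorff convergence $(M,\omega_{t_i},x_0)\to (M_T,d_T,x_T)$ when restricted to $M\setminus D$, thanks to the smooth convergence of $\omega_{t_i}$ to $\omega_T$ on $M\setminus D$ (Proposition \ref{Cheeger-Gromov convergence: prop 2}). Consequently, for any $y\in M\setminus D$ corresponding to $x\in M_T\setminus D_T$, the constant sequence $x_i=y$ is an admissible approximating sequence in the definition $\Phi_T(x)=\lim\Phi_{t_i}(x_i)$. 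Since $\Phi_{t_i}=\Phi$ does not depend on $t_i$, this yields the compatibility relation $\Phi_T(x)=\Phi(y)$.

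From this compatibility, both halves of the bijectivity statement follow immediately. On the one hand, $\Phi_T(M_T\setminus D_T)\subseteq\Phi(M\setminus D)$: every $x\in M_T\setminus D_T$ comes from some $y\in M\setminus D$ with $\Phi_T(x)=\Phi(y)$. On the other hand, every $z=\Phi(y)$ with $y\in M\setminus D$ is realised as $\Phi_T(x)$ for the $x\in M_T\setminus D_T$ corresponding to $y$; hence $\Phi_T$ maps onto $\Phi(M\setminus D)$. Injectivity on $M_T\setminus D_T$ is then a direct consequence of Proposition \ref{limit map: injectivity}, which gives injectivity of $\Phi_T$ on the whole of $M_T$.

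The only delicate point — and the main bookkeeping challenge — is the verification that the limit defining $\Phi_T(x)$ can be computed by the constant sequence $x_i=y$ once $y$ is identified with $x$ under the isometry. This is exactly the assertion that the identity map on $M\setminus D$ represents the Gromov-Hausdorff convergence in the sense of Subsection~3.2, and it uses both the $C^{1,\alpha}$ convergence of $\omega_{t_i}$ to $\bar\omega_T$ on the regular set and the containment $M\setminus D\subset M_{\reg}$ noted in the excerpt. Once this identification is in place the argument is essentially tautological, and no additional analytic input is required.
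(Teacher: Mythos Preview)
Your proof is correct and follows essentially the same approach as the paper: both identify $\Phi_T(x)$ with $\Phi(x')$ for $x'\in M\setminus D$ approximating $x$ via the smooth convergence on $M\setminus D$, and both obtain surjectivity by observing that every $x'\in M\setminus D$ has a Gromov--Hausdorff limit in $M_T\setminus D_T$. The only minor difference is packaging: the paper argues directly that $d_{\omega_{t_i}}(x',D)\ge\delta$ uniformly to place the limit in $M_T\setminus D_T$, while you invoke the isometry $(M\setminus D,\omega_T)\cong(M_T\setminus D_T,\bar\omega_T)$ from Subsection~3.2; and you make explicit the appeal to Proposition~\ref{limit map: injectivity} for injectivity, which the paper leaves implicit.
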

\begin{proof}
For any $x\in M_T\backslash D_T$, there exists $x'\in M\backslash D$ such that $x'\stackrel{d_{GH}}{\longrightarrow}x$. Then $\Phi_T(x)=\lim\Phi_{t_i}(x')=\Phi(x')\in\Phi(M\backslash D)$, so $\Phi_T(M_T\backslash D_T)\subset \Phi(M\backslash D)$. Conversely, for any $x'\in M\backslash D$, we have $x'\stackrel{d_{GH}}{\longrightarrow}x$ for some $x\in M_T\backslash D_T$. This is because $d_{\omega_{t_i}}(x',D)\ge\delta$ uniformly for some $\delta>0$ independent of $i$. Thus, $\Phi(x')=\Phi_T(x)$, the map $\Phi_T$ is surjective onto $\Phi(M\backslash D)$.
\end{proof}

\begin{lemm}
$\Phi_T:D_T\rightarrow\Phi(D)$ is surjective.
\end{lemm}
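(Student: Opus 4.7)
Given $y \in \Phi(D)$, pick $p \in D$ with $\Phi(p) = y$. The plan is to view $p$ as the constant sequence of points in $(M,\omega_{t_i})$ and extract a Gromov-Hausdorff subsequential limit $x\in M_T$. By the very definition of $D_T$ we will then have $x\in D_T$, and since $\Phi_{t_i}\equiv\Phi$ is a fixed holomorphic map, continuity of $\Phi_T$ will force $\Phi_T(x)=\lim_i\Phi_{t_i}(p)=\Phi(p)=y$. The only non-trivial ingredient is the uniform bound
\begin{equation*}
\limsup_{i\to\infty}d_{\omega_{t_i}}(p,x_0)<\infty,
\end{equation*}
which guarantees that the constant sequence $\{p\}$ does not escape to infinity in the pointed convergence.

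To establish this bound I would upgrade Corollary~\ref{diameter bound: normal estimate} so that it is uniform in $t$ as well as in $\epsilon$. On the blow-up $\pi:\widetilde{M}\to M$ at $p$, for each $t\in[T/2,T)$ and small $\epsilon>0$ let $\tilde{\omega}_{t,\epsilon}$ be the K\"ahler metric solving the analog of (\ref{M-A: epsilon}) with $T$ replaced by $t$,
\begin{equation*}
\bigl(\pi^{*}\eta_t+\epsilon\chi+\sqrt{-1}\partial\bar{\partial}\tilde{\varphi}_{t,\epsilon}\bigr)^n = e^{\tilde{\varphi}_{t,\epsilon}/t}\bigl(\|\sigma_E\|_{h_E}^{2}+\epsilon^2\bigr)^{n-1}\widetilde{\Omega}.
\end{equation*}
The $C^0$ bounds on $\tilde{\varphi}_{t,\epsilon}$, Yau's Schwarz lemma giving the preliminary upper bound, and the refinement in the normal directions along the proper transform of $D$ all carry over with constants independent of $(t,\epsilon)$, because they rely only on the $t$-uniform bound (\ref{potential bound: C^0}) for $u_t$ together with fixed geometric data. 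Letting $\epsilon\to 0$ (using the comparison principle of \cite{BEGZ}) then yields
\begin{equation*}
\pi^{*}\omega_t\le \frac{C}{\|\sigma_E\|_{h_E}^{2(1-\delta)}\prod_i|f_i|^{2\lambda}}\chi\quad \text{in } \widetilde{B},
\end{equation*}
with $C,\delta,\lambda$ independent of $t\in[T/2,T)$.

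Now choose a holomorphic disk $\gamma$ through $p$, transverse to $D$ at $p$ and disjoint from $D$ elsewhere; its lift to $\widetilde{M}$ meets $E$ once and avoids the proper transform of $D$. Integrating the uniform bound along $\gamma$ gives $d_{\omega_t}(p,\gamma(r))\le Cr^{\delta}/\delta$, uniformly in $t$. Since $\gamma(r)\in M\setminus D\subset M\setminus\mathcal{S}_M$ is a fixed point, the smooth convergence $\omega_{t_i}\to\omega_T$ on $M\setminus\mathcal{S}_M$ from Proposition~\ref{Cheeger-Gromov convergence: prop 2} makes $d_{\omega_{t_i}}(\gamma(r),x_0)$ uniformly bounded, and the desired distance bound for $p$ follows. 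The main obstacle is verifying that every constant in the proofs of Section 4.1 is in fact independent of $t$: the shape of the arguments is unchanged, but one must track the background form $\pi^{*}\eta_t$, the right-hand side, and the exponent $1/t$, all of which vary with $t$ but remain $t$-uniformly controlled on $[T/2,T)$; once the uniform normal estimate is in hand, the remainder of the argument is routine.
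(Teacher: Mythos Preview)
Your approach is viable in outline, but it takes a genuinely harder route than the paper's. You set out to upgrade Corollary~\ref{diameter bound: normal estimate} to a $t$-uniform bound on $\pi^*\omega_t$, which means rerunning the entire machinery of \S4.1 with the two-parameter family $(t,\epsilon)$ and verifying that every constant in the $C^0$ estimate, the Schwarz-type upper bound, and the normal-direction refinement is independent of $t\in[T/2,T)$. You correctly identify this as the ``main obstacle,'' and while it is plausible, it is a real additional layer of work.

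The paper bypasses this entirely by using the normal estimate only at the single limiting time $T$. Given $x'\in D$, Corollary~\ref{diameter bound: normal estimate} yields a curve $\gamma:[0,1]\to M$ with $\gamma(0)=x'$, $\gamma((0,1])\subset M\setminus D$, and $L_{\omega_T}(\gamma)<\infty$. Through the isometry $(M\setminus D,\omega_T)\cong(M_T\setminus D_T,\bar\omega_T)$ already established in \S3.2, the image curve $\bar\gamma$ is Cauchy in $M_T$ and hence has a limit $x''\in M_T$; continuity of $\Phi_T$ gives $\Phi_T(x'')=\lim_{t\to 0}\Phi(\gamma(t))=\Phi(x')$, and the preceding lemma (bijectivity of $\Phi_T$ on $M_T\setminus D_T$) forces $x''\in D_T$. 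Thus the paper trades your $t$-uniform blow-up analysis for a single-time estimate plus the structural fact that the completion of $(M\setminus D,\omega_T)$ already embeds into the a~priori Gromov--Hausdorff limit $M_T$. Your strategy has the conceptual appeal of producing the preimage directly as a GH sublimit of the constant sequence $p$, but at the cost of uniformity estimates the paper never needs.
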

\begin{proof}
Noticing that $\Phi(D)$ is compact, the limits of points in $\Phi(D)$ remains in this set, so $\Phi_T(D_T)\subset\Phi(D)$. On the other hand, by Corollary \ref{diameter bound: normal estimate}, for any $x'\in D$ there exists a curve $\gamma:[0,1]\rightarrow M$ with $\gamma(0)=x'$ and $\gamma\big((0,1]\big)\subset M\backslash D$ such that $$L_{\omega_T}(\gamma)=\int_0^1|\dot{\gamma}|_{\omega_T}dt<\infty.$$
Through an isometry from $(M\backslash D,\omega_T)$ to $(M_T\backslash D_T,\bar{\omega}_T)$, the curve $\gamma(t)$ gives a curve $\bar{\gamma}(t)$, $0<t\le 1$, which is bounded. Hence, there is a limit $x''=\lim_{t\rightarrow 0}\bar{\gamma}(t)$ in $M_T$. Then, by the continuity of $\Phi_T$,
$$\Phi_T(x'')=\lim_{t\rightarrow 0}\Phi_T(\bar{\gamma}(t))=\lim_{t\rightarrow 0}\lim_{i\rightarrow\infty}\Phi_{t_i}(\gamma(t))=\lim_{t\rightarrow 0}\Phi(\gamma(t))=\Phi(x').$$
Finally from above lemma we know that $x''\in D_T$, i.e., $\Phi(x')\in\Phi_T(D_T)$.
\end{proof}

\begin{coro}\label{limit map: surjectivity}
$\Phi_T$ is surjective.
\end{coro}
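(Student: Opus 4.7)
The plan is to deduce surjectivity of $\Phi_T$ directly from the two preceding lemmas by decomposing the source $M_T$ along the limit divisor $D_T$. Specifically, I would write $M_T = (M_T\backslash D_T)\cup D_T$ and similarly decompose the target $\Phi(M) = \Phi(M\backslash D)\cup \Phi(D)$. Since $\Phi_T$ lands in $\Phi(M)$ by construction (as the pointwise limit of the maps $\Phi_{t_i} = \Phi$ restricted to sequences $x_i\in M$), surjectivity onto $\Phi(M)$ is the natural statement to verify.

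First I would invoke the bijection lemma to conclude $\Phi_T(M_T\backslash D_T) = \Phi(M\backslash D)$, which handles the generic piece. Then I would invoke the surjectivity lemma on the divisor to obtain $\Phi_T(D_T)\supset\Phi(D)$. Taking the union,
\begin{equation}
\Phi_T(M_T) = \Phi_T(M_T\backslash D_T)\cup\Phi_T(D_T) \supset \Phi(M\backslash D)\cup\Phi(D) = \Phi(M),
\end{equation}
and since the reverse containment $\Phi_T(M_T)\subset\Phi(M)$ is immediate from the definition of $\Phi_T$ as a Gromov-Hausdorff limit of maps with image in the closed set $\Phi(M)$, equality follows.

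There is essentially no obstacle here: the corollary is a one-line bookkeeping consequence of the two lemmas it follows. The only thing worth stating explicitly is that $\Phi_T(M_T)\subset \Phi(M)$, which holds because $\Phi(M)$ is compact (as the image of a compact manifold under a holomorphic, hence continuous, map) and thus closed in $\mathbb{C}P^N$, so any limit $\lim_i \Phi_{t_i}(x_i) = \lim_i \Phi(x_i)$ lies in $\Phi(M)$. Combined with the two lemmas the corollary is proved.
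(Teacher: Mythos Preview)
Your proposal is correct and matches the paper's intent exactly: the corollary is stated without proof immediately after the two lemmas, precisely because the decomposition $M_T=(M_T\backslash D_T)\cup D_T$ together with $\Phi(M)=\Phi(M\backslash D)\cup\Phi(D)$ and the containment $\Phi_T(M_T)\subset\Phi(M)$ makes surjectivity an immediate consequence. There is nothing to add.
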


Combining with the Propositions \ref{limit map: injectivity} and \ref{limit map: local homeomorphism} we conclude that

\begin{prop}\label{limit map: homeomorphism}
$\Phi_T:M_T\rightarrow\Phi(M)$ is a homeomorphism. As a consequence, the diameter of $M_T$ is finite.
\end{prop}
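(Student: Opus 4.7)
The plan is to assemble Propositions \ref{limit map: injectivity} and \ref{limit map: local homeomorphism} together with Corollary \ref{limit map: surjectivity} into the homeomorphism statement, and then use the fact that $\Phi(M)\subset\mathbb{C}P^N$ is compact to deduce finite diameter. By construction (the diagonalization argument just before Proposition \ref{limit map: injectivity}), $\Phi_T$ is already known to be Lipschitz continuous, hence continuous. From the three cited results $\Phi_T$ is a continuous bijection onto $\Phi(M)$ that is a local homeomorphism.

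The only remaining point is to upgrade ``continuous bijective local homeomorphism'' to ``homeomorphism''. I would argue this directly: around each $x\in M_T$ Proposition \ref{limit map: local homeomorphism} supplies an open neighborhood $U_x$ such that $\Phi_T|_{U_x}\colon U_x\to \Phi_T(U_x)$ is a homeomorphism onto the open set $\Phi_T(U_x)\subset\Phi(M)$. Any open $V\subset M_T$ can be written as $V=\bigcup_x (V\cap U_x)$, and each $\Phi_T(V\cap U_x)$ is open in $\Phi(M)$ because $\Phi_T|_{U_x}$ is open; hence $\Phi_T(V)=\bigcup_x \Phi_T(V\cap U_x)$ is open. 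Thus $\Phi_T$ is an open map. A continuous open bijection is a homeomorphism, so $\Phi_T^{-1}$ is continuous on all of $\Phi(M)$.

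Once $\Phi_T$ is a global homeomorphism, $M_T$ is homeomorphic to $\Phi(M)$, which is the image of the compact manifold $M$ under the holomorphic (hence continuous) map $\Phi$, and therefore compact in $\mathbb{C}P^N$. It follows that $M_T$ is a compact metric space, so $\diam_{d_T}(M_T)<\infty$, completing the proposition.

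I do not anticipate any serious obstacle here: the analytic content has already been absorbed into the three ingredients (injectivity, surjectivity, local homeomorphism). The only subtlety worth flagging is the openness argument above, which is purely point-set topological and genuinely needs all three inputs simultaneously (local homeomorphism alone would not give a global homeomorphism without bijectivity, and bijectivity plus continuity would not suffice without the openness supplied by the local inverses of Proposition \ref{limit map: local homeomorphism}).
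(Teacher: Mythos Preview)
Your proposal is correct and essentially matches the paper's argument. The paper's proof is a single sentence---``It is clear that $\Phi(M)$ is compact, so $M_T$ is compact and has finite diameter''---which tacitly uses exactly the point-set reasoning you spell out: the combination of Propositions \ref{limit map: injectivity}, \ref{limit map: local homeomorphism} and Corollary \ref{limit map: surjectivity} makes $\Phi_T$ a bijective local homeomorphism, hence an open continuous bijection, hence a homeomorphism; compactness of $M_T$ then follows from compactness of $\Phi(M)$. Your write-up simply makes explicit what the paper leaves to the reader.
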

\begin{proof}
It is clear that $\Phi(M)$ is compact, so $M_T$ is compact and has finite diameter.
\end{proof}

\begin{rema}
In {\rm\cite{Gu15}}, Guo presented another proof of the proposition without use of the local homeomorphic property.
\end{rema}

\begin{rema}
The continuity method provides an approach to construct K\"ahler currents in the big cohomological classes; as has shown, the K\"ahler currents are smooth on the ample locus. In general it is hard to detect the metric property of the K\"ahler currents.
\end{rema}

\subsection{Cheeger-Gromov convergence: diameter finiteness}

\begin{lemm}
There exists $C$ such that
\begin{equation}
\diam(M,\omega_t)\le C,\,\forall 0<t<T.
\end{equation}
\end{lemm}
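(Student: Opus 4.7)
The plan is to derive this uniform diameter bound as a corollary of the compactness of the Gromov-Hausdorff limit $M_T$ established in Proposition \ref{limit map: homeomorphism}, combined with the Gromov-Hausdorff approximation properties already developed in Section 3.2. The point is that once we know the limit space has finite diameter, the GH approximation propagates this back to the approximating spaces $(M,\omega_t)$.

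First I would dispose of the easy range: on any compact sub-interval $[0,T-\delta]\subset[0,T)$ the metrics $\omega_t$ depend smoothly on $t$, so $\diam(M,\omega_t)$ is continuous and bounded there. Hence it suffices to bound $\diam(M,\omega_t)$ for $t$ close to $T$, and I would argue by contradiction. Suppose that along some sequence $t_k\to T$ one has $\diam(M,\omega_{t_k})\to\infty$. Fix the base point $x_0\in M\setminus\mathcal{S}_M$ from Section 3.2 and pick $y_k\in M$ with
\[
d_{\omega_{t_k}}(y_k,x_0)\,\ge\,\tfrac{1}{2}\diam(M,\omega_{t_k})\,\longrightarrow\,\infty .
\]

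By Gromov's precompactness theorem applied to the uniform Ricci lower bound $\Ric(\omega_{t_k})\ge-\frac{2}{T}\omega_{t_k}$, one may pass to a further subsequence so that $(M,\omega_{t_k},x_0)$ converges in the pointed Gromov-Hausdorff topology; by Proposition \ref{Cheeger-Gromov convergence: prop 1} the limit must be $(M_T,d_T,x_T)$, which is compact by Proposition \ref{limit map: homeomorphism}. Set $D_0=\diam(M_T,d_T)<\infty$. Using the explicit Gromov-Hausdorff approximation $\id:M_{\reg}\to M$ from the last lemma of Section 3.2, for large $k$ there is $\epsilon_k\to 0$ such that every $y\in M$ lies within $\omega_{t_k}$-distance $\epsilon_k$ of some $z\in M_{\reg}$, and the restriction of $d_{\omega_{t_k}}$ to $M_{\reg}$ is $\epsilon_k$-close to $d_T$. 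Choosing $z_k\in M_{\reg}$ with $d_{\omega_{t_k}}(y_k,z_k)<\epsilon_k$, we obtain
\[
d_{\omega_{t_k}}(x_0,y_k)\,\le\,d_{\omega_{t_k}}(x_0,z_k)+\epsilon_k\,\le\,d_T(x_0,z_k)+2\epsilon_k\,\le\,D_0+2\epsilon_k,
\]
contradicting $d_{\omega_{t_k}}(y_k,x_0)\to\infty$. This yields $\diam(M,\omega_t)\le 2D_0+O(\epsilon_t)$ for $t$ near $T$, and combined with the smooth range one gets a uniform constant $C$.

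The main subtlety, and the reason the proof cannot be phrased purely in the language of pointed GH convergence, is that we need the approximation $\id:M_{\reg}\to M$ to cover \emph{all} of $M$ at the scale $D_0$ rather than just a pre-chosen ball. This is precisely what compactness of $M_T$ guarantees, via Proposition \ref{limit map: homeomorphism}: any ball $B_{d_T}(x_T,R)$ with $R>D_0$ equals all of $M_T$, so for $k$ large the $\epsilon_k$-approximation of this ball must surject onto $M$ up to error $\epsilon_k$. Thus the only nontrivial input is the previously established compactness of $M_T$ (and hence the effective finite generation argument used to prove it); once that is in hand, the diameter bound is essentially automatic.
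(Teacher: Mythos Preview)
Your strategy is sound in principle---it is a general fact that if geodesic spaces converge in the pointed Gromov--Hausdorff sense to a compact limit then their diameters are eventually bounded---but the argument as written has a real gap. The problematic step is the inequality $d_{\omega_{t_k}}(x_0,z_k)\le d_T(x_0,z_k)+2\epsilon_k$. The lemma from \S3.2 you invoke gives only \emph{pointed} GH convergence: for each fixed $R$, the identity is an $\epsilon$-approximation between $B_{d_T}(x_T,R)\cap M_{\reg}$ and $B_{\omega_{t_k}}(x_0,R)$. Once $M_T$ is known to be compact and $R>D_0$, the domain is all of $M_{\reg}$, but the target is still only the ball $B_{\omega_{t_k}}(x_0,R)$, not all of $M$. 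Your $z_k$ was chosen near a point $y_k$ with $d_{\omega_{t_k}}(x_0,y_k)$ arbitrarily large, so there is no reason $z_k$ lies in that ball, and hence no reason the distance comparison applies to it. Your last paragraph identifies exactly this subtlety but resolves it circularly: the claim that the approximation ``must surject onto $M$ up to error $\epsilon_k$'' is precisely the diameter bound you are trying to prove. (The fix is small: instead of taking $y_k$ arbitrarily far, use connectedness of $M$ to take $y_k$ at distance \emph{exactly} $D_0+1$ from $x_0$, which exists as soon as $\diam(M,\omega_{t_k})>D_0+1$; then $y_k$ does lie in the controlled ball $\overline{B_{\omega_{t_k}}(x_0,D_0+2)}$ and the comparison produces a point of $M_T$ at distance $\approx D_0+1$ from $x_T$, a contradiction.)

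For comparison, the paper's proof takes a different route and avoids the GH-approximation machinery altogether: it uses Bishop--Gromov relative volume comparison. One fixes a compact connected $K\subset M\setminus D$ with $\vol_{\omega_T}(M\setminus K)\le\varepsilon$ and intrinsic $d_{\omega_T}$-diameter $\le 2D_0$; smooth convergence on $K$ transfers both bounds to $\omega_t$ for $t$ near $T$. If $x_t$ maximizes the distance $R_1$ to $K$, then $B_{\omega_t}(x_t,R_1)\subset M\setminus K$ has volume $\le 2\varepsilon$ while $B_{\omega_t}(x_t,R_1+2R_0)=M$, and the volume comparison bounds the ratio and hence $R_1$. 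This argument is more robust (it needs only the Ricci lower bound, the total volume, and smooth convergence on a fixed compact set) and gives an effective constant.
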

\begin{proof}
It suffices to prove a uniform diameter bound of $\omega_t$ when $t$ is close to $T$. This is simply a consequence of the relative volume comparison theorem.

Recall that by the formula (\ref{M-A: continuity}) we may assume that $\Ric(\omega_t)$ is bounded below uniformly, say
$$\Ric(\omega_t)\ge-(n-1)\Lambda\omega_t,\,\forall \frac{T}{2}\le t\le T.$$
Denote by $R_0=\diam_{d_T}(M_T)$. Let $\varepsilon>0$ be any number and $D$ be any divisor such that $\mathcal{S}_M\subset D$. Since the regular set $\mathcal{R}$ is geodesically convex, we can choose $K\subset M\backslash D$, a connected and compact subset such that $\vol_{\omega_T}(M\backslash K)\le \varepsilon$ and $\diam_{d_{\omega_T}}(K)\le 2R_0$. By smooth convergence on $K$, we have
$$\vol_{\omega_t}(M\backslash K)\le 2\varepsilon$$
and
$$\diam_{d_{\omega_t}}(K)\le 2\diam_{d_{\omega_T}}(K)\le 2R_0$$
for $t$ sufficiently close to $T$, where the diameter is measured with respect to the intrinsic length metric induced by $\omega_t$ on $K$.

Suppose $x_t\in M\backslash K$ achieves maximum distance to $K$ in $(M,\omega_t)$ and put $R_1=R_1(t)=d_{\omega_t}(x_t,K)$. Then by the relative volume comparison theorem we have, when $t$ is chosen close to $T$,
\begin{eqnarray}
\frac{\vol_{\omega_t}(M)}{\varepsilon}\le\frac{\vol_{\omega_t}(B_{2R_0+R_1}(x_t))}{\vol_{\omega_t}(B_{R_1}(x_t))}
\le\frac{\int_0^{2R_0+R_1}\sinh^{n-1}\big(\sqrt{\Lambda}t\big)dt}
{\int_0^{R_1}\sinh^{n-1}\big(\sqrt{\Lambda}t\big)dt}.\nonumber
\end{eqnarray}
If $\varepsilon=\varepsilon(R_0,\Lambda,n,\vol_{\omega_t}(M))$ is chosen sufficiently small, this leads to a desired upper bound of $R_1$ in terms of $\vol_{\omega_t}(M)$, $\varepsilon$, $n$, $\Lambda$ and $R_0$, which does not depend on the time $t$.
\end{proof}

Now, summing up the discussions in \S 3.2, \S 3.7 and this subsection, we end the proof of Theorem \ref{convergence: CG}.



\section{Some examples}


\subsection{Minimal models of general type}

In the case when $M$ is a smooth minimal model of general type, the continuity equation (\ref{M-A: continuity}) is solvable for all $t>0$. We normalize the equation as follows
\begin{equation}\label{M-A: continuity-normalization}
(1+t)\omega=\omega_0-t\Ric.
\end{equation}
$\omega(t)$ is a family of solution to this equation iff $\frac{1}{1+t}\omega(\frac{t}{1+t})$ solves the initial equation (\ref{M-A: continuity}). Therefore, \ref{M-A: continuity-normalization} is solvable for all $t>0$. Moreover, as in the case of K\"ahler-Ricci flow, cf. \cite{TiZh06, So14}, we have

\begin{theo}\label{theo: minimal model}
When $M$ is a smooth minimal model of general type, the solution $\omega(t)$ of {\rm(\ref{M-A: continuity-normalization})} satisfies
\begin{itemize}
\item[{\rm(1)}] $\omega(t)$ converges as $t\rightarrow\infty$ in the current sense to a positive current $\omega_\infty\in-2\pi c_1(M)$ satisfying $\Ric(\omega_\infty)=-\omega_\infty$,

\item[{\rm(2)}] $\omega(t)$ converges smoothly to $\omega_\infty$ outside the exceptional locus $M_{\exc}$ of the birational morphism to the canonical model of $M$,

\item[{\rm(3)}] the metric completion of $(M\backslash M_{\exc},\omega_\infty)$ is a compact length metric space, denoted $(M_\infty,d_\infty)$, which is homeomorphic to the canonical model of $M$,

\item[{\rm(4)}] $(M,\omega(t))$ converges in the Cheeger-Gromov sense to $(M_\infty,d_\infty)$.
\end{itemize}
\end{theo}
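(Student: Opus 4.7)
The plan is to reduce the theorem to the machinery already developed for Theorem \ref{convergence: CG}, exploiting the change of variables indicated just before the statement. Since $M$ is a minimal model of general type, $K_M$ is nef and big; by the base point free theorem $K_M$ is semi-ample, so for some $\ell_0$ the linear system $|\ell_0 K_M|$ defines a birational morphism $\Phi: M \to M_{\mathrm{can}}$ onto the canonical model, and the exceptional locus $M_{\mathrm{exc}}$ of this morphism coincides with the non-ample locus of $c_1(K_M)$. The cohomology class of the normalized solution is $[\omega(t)] = \frac{1}{1+t}([\omega_0] + t\,c_1(K_M))$, which converges to $c_1(K_M)$ as $t\to\infty$. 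Thus after this limit the geometric situation mirrors exactly the noncollapsing setup of Section~3, with $K_M$ playing the role of the limit polarization and $M_{\mathrm{exc}}$ playing the role of $\mathcal{S}_M$.

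First I would rewrite \eqref{M-A: continuity-normalization} as a complex Monge-Amp\`ere equation in the form of \eqref{M-A: continuity 2}, using background forms $\eta_t = \frac{1}{1+t}(\omega_0 + t\,\eta_\infty)$ where $\eta_\infty = \frac{1}{\ell_0}\Phi^*\omega_{FS}$ is the pullback Fubini-Study form, and obtain the resulting equation of the form $(\eta_t + \sqrt{-1}\partial\bar{\partial}u_t)^n = e^{u_t/\text{something}}\,\Omega$. The $C^0$ estimate on $u_t$ follows from Ko{\l}odziej's capacity method, as in Lemma~2.1, since the reference form stays in a uniformly bounded family and its limit is a big class. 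Next, Yau's Schwarz lemma, applied with the uniform Ricci lower bound $\Ric(\omega(t)) \ge -\omega(t) - \frac{1}{t}\omega_0$ inherent to the normalized equation, yields $\eta_\infty \le C\omega(t)$ uniformly in $t$, mirroring Proposition~\ref{metric comparison: prop}. A bootstrap in \eqref{M-A: continuity 2} then gives smooth convergence of $\omega(t)$ to a K\"ahler metric $\omega_\infty$ on $M \setminus M_{\mathrm{exc}}$ solving $\Ric(\omega_\infty) = -\omega_\infty$, establishing (1) and (2). Uniqueness of $u_\infty$ in the big class, as in \cite{BEGZ}, identifies $\omega_\infty$ unambiguously.

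For (3) and (4) the strategy is to apply verbatim the Cheeger-Gromov analysis of Sections~3.2 and 4. Using Gromov precompactness (available since the Ricci lower bound is uniform after normalization), pick any Gromov-Hausdorff limit $(M_\infty, d_\infty, x_\infty)$. The regular/singular decomposition $M_\infty = \mathcal{R} \cup \mathcal{S}$, the smooth convergence on $\mathcal{R}$, geodesic convexity of $\mathcal{R}$, and the isometry $\overline{(M \setminus M_{\mathrm{exc}}, d_{\omega_\infty})} \cong (M_\infty, d_\infty)$ all transfer directly, because the arguments in Subsection~3.2 only use the smooth convergence on the ample locus together with $\codim\mathcal{S} \ge 2$. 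The $L^2$ estimate, $L^\infty$ and gradient estimates for holomorphic sections of $K_M^{\otimes k}$ go through unchanged (indeed they simplify, since $\Theta_{h_t}$ now equals $\omega(t)$ up to a lower-order $\omega_0$ term controlled by $\frac{1}{1+t}$). The construction of approximating peak sections of Steps~1-4 in Subsection~3.7 then produces a Lipschitz map $\Phi_\infty: (M_\infty, d_\infty) \to M_{\mathrm{can}}$ which is injective and a local homeomorphism, and Section~4.1 gives surjectivity together with the diameter bound, so $\Phi_\infty$ is a homeomorphism $M_\infty \cong M_{\mathrm{can}}$. Finally, the uniform diameter bound in Subsection~4.2 (relative volume comparison, using $\Ric(\omega(t)) \ge -C$) shows compactness of $M_\infty$ and the full Cheeger-Gromov convergence of $(M, \omega(t))$.

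The main obstacle is the genuinely long-time nature of the estimates: unlike the finite-time case of Theorem~\ref{convergence: CG}, the constants must be tracked uniformly as $t \to \infty$, not as $t \nearrow T < \infty$. Concretely, the upper bound for $u_t'$ in Lemma~2.3 relies on the factor $\frac{1}{t^2}(tu' - u)$ and must be replaced by the appropriate monotonicity coming from the normalization; this is where the term $(1+t)\omega$ rather than $\omega$ on the left of \eqref{M-A: continuity-normalization} becomes crucial, producing an extra $-\omega$ in the Ricci curvature and providing the damping that allows $u_t$ to converge rather than drift to $-\infty$. Handling this monotonicity carefully, and checking that Ko{\l}odziej's capacity estimate is uniform in $t \in [0,\infty)$ with the normalized background form, is the one place where the argument genuinely departs from the finite-time template; once done, the rest is a straightforward transcription of Sections~3 and 4.
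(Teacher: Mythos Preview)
Your proposal is correct and follows essentially the same approach as the paper. The paper itself does not give a detailed proof of this theorem: it only records the change of variables $s=\frac{t}{1+t}$ to deduce solvability of the normalized equation and then states the result with the remark ``as in the case of K\"ahler-Ricci flow, cf.\ \cite{TiZh06, So14}'', leaving the reader to transcribe the estimates of Sections~2--4 to the long-time normalized setting; your outline is precisely that transcription, and you correctly flag the one genuine point of care, namely tracking uniformity of the $C^0$ and monotonicity estimates as $t\to\infty$ rather than $t\nearrow T<\infty$.
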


In K\"ahler-Ricci flow, the global Cheeger-Gromov convergence of $\omega(t)$ is still unknown; see Conjecture 4.1 in \cite{So14}. When the Ricci curvature is bounded below, the conjecture is confirmed by Guo \cite{Gu15}.

\subsection{Algebraic contractions}

Let $X$ be a normal projective variety and $\pi:M\rightarrow X$ be an algebraic contraction with exceptional divisor $E$. $E$ has complex codimension 1 or at least 2 respectively when $\pi$ is a divisoral contraction or a small contraction. According to the discussion in the proof of Theorem \ref{convergence: CG}, cf. Sections 3 and 4, we have

\begin{theo}\label{theo: contraction}
Assume as above. Let $\omega_0$ be an initial K\"ahler metric in $2\pi c_1(L')$ for some line bundle $L'$. Then the solution $\omega(t)$ to {\rm(\ref{M-A: continuity})} satisfies
\begin{itemize}
\item[{\rm(1)}] $\omega(t)$ converges as $t\rightarrow T$ in the current sense to a current $\omega_T$ satisfying {\rm(\ref{M-A: continuity})} at $T$,

\item[{\rm(2)}] $\omega_T$ is smooth outside the exceptional locus $M_{\exc}$ of $\pi$,

\item[{\rm(3)}] the metric completion of $(M\backslash M_{\exc},\omega_T)$ is a compact length metric space, denoted $(M_T,d_T)$, which is homeomorphic to $X$,

\item[{\rm(4)}] $(M,\omega(t))$ converges in the Cheeger-Gromov sense to $(M_T,d_T)$.
\end{itemize}
\end{theo}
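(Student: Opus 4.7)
The plan is to reduce Theorem 5.2 to Theorems 1.1 and 1.2 by calibrating the cohomological setup so that the critical time $T$ of the continuity equation is exactly the time at which the deformed class becomes the pullback of an ample class from $X$, and then identifying the non-ample locus $\mathcal{S}_M$ with the exceptional locus $M_{\exc}$ and the metric limit $M_T$ with $X$.

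First I would choose the setup. Since $\pi : M \to X$ is a birational algebraic contraction to a normal projective variety (both divisorial and small contractions have $\dim X = n$), there is an ample $\mathbb{Q}$-line bundle $A$ on $X$ and a positive rational number $T$, obtained from the Kawamata rationality theorem, such that $[\omega_0] + T c_1(K_M) = c_1(\pi^* A)$. Because $A$ is ample on $X$, one has $(\pi^* A)^n = A^n > 0$, so the noncollapsing hypothesis $([\omega_0]-Tc_1(M))^n>0$ of Theorem \ref{convergence: current} is satisfied. Applying Theorem \ref{convergence: current} directly yields (1): the family $\omega(t)$ converges in the current sense to a weakly K\"ahler metric $\omega_T$ solving the continuity equation at $t = T$ away from $\mathcal{S}_M$.

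Next I would identify $\mathcal{S}_M$ with $M_{\exc}$. Raising $\ell_0$ if necessary, the limit line bundle $L = \ell_0(L' + TK_M)$ equals $\pi^* A^{\otimes \ell_0}$, and by the basepoint free theorem applied to $A$ on $X$ the linear system $|L|$ is basepoint free, inducing a factorization $\Phi = \iota \circ \pi : M \to \mathbb{C}P^N$ with $\iota$ a projective embedding of $X$. Thus the null locus of $[\omega_0] + T K_M$ is exactly $M_{\exc}$; in the divisorial case $M_{\exc} = E$ is itself a divisor, while in the small contraction case one can either pass to a resolution of $X$ and compare non-ample loci under birational pullback, or argue directly that $\mathcal{S}_M$ agrees with the set contracted by $\Phi$. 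Part (2) of the theorem then follows from the smooth convergence in Theorem \ref{convergence: current} outside $\mathcal{S}_M$, while parts (3) and (4) follow from Theorem \ref{convergence: CG} combined with the diameter bound of Section 4 and the homeomorphism $\Phi_T : M_T \to \Phi(M) = \iota(X) \cong X$ constructed in Section 3 (Propositions \ref{limit map: injectivity}, \ref{limit map: local homeomorphism}, and \ref{limit map: homeomorphism}).

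The main obstacle I expect is the identification $\mathcal{S}_M = M_{\exc}$ when $\pi$ is a small contraction, since then $M_{\exc}$ has complex codimension at least two and does not directly present itself as an intersection of divisors in the definition of $\mathcal{S}_M$. The cleanest route is to invoke the factorization $\Phi = \iota\circ\pi$: the map $\Phi$ fails to be an immersion exactly along $M_{\exc}$, and the arguments in Subsections 3.2 and 4.1 (together with Corollary \ref{diameter bound: normal estimate}, which shows every point of the exceptional set is at finite distance from the regular part in $\omega_T$) then force $M_{\reg} \subset M\setminus M_{\exc}$ and $\mathcal{S}_M = M_{\exc}$. Once this identification is in place, parts (1)--(4) are immediate from the general structural results already established in Sections 3 and 4.
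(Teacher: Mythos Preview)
Your proposal is correct and matches the paper's approach: the paper gives no independent proof of this theorem, simply stating that it follows from ``the discussion in the proof of Theorem \ref{convergence: CG}, cf.\ Sections 3 and 4'', and your reduction to Theorems \ref{convergence: current} and \ref{convergence: CG} via the factorization $\Phi=\iota\circ\pi$ is exactly the intended route.

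One small simplification: you do not actually need the identification $\mathcal{S}_M=M_{\exc}$ as such. The paper's machinery (Corollary \ref{smooth on regular locus}, Proposition \ref{Cheeger-Gromov convergence: prop 2}) already upgrades smoothness and smooth convergence from $M\setminus\mathcal{S}_M$ to the larger set $M_{\reg}=M\setminus M_{\sing}$, where $M_{\sing}$ is the critical locus of $\Phi$. From $\Phi=\iota\circ\pi$ with $\iota$ an embedding on the smooth locus of $X$, one sees directly that $M_{\sing}=M_{\exc}$ (every positive-dimensional fiber forces a kernel for $d\pi$, and conversely $\pi$ is a local isomorphism off $M_{\exc}$). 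Parts (2)--(4) then follow immediately: $\omega_T$ is smooth on $M_{\reg}=M\setminus M_{\exc}$, the regular set $\mathcal{R}$ of $M_T$ is identified with $M_{\reg}$ by Proposition \ref{Cheeger-Gromov convergence: prop 2}(2), and $\Phi_T:M_T\to\Phi(M)=\iota(X)\cong X$ is the homeomorphism of Proposition \ref{limit map: homeomorphism}. The equality $\mathcal{S}_M=M_{\exc}$ does hold (it is Nakamaye's theorem that the augmented base locus of a big and nef class equals its null locus, which for $\pi^*A$ is precisely $M_{\exc}$), but you can sidestep it entirely.
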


To produce the minimal model program one needs to deform the continuity equation (\ref{M-A: continuity}) through the singular time $T$. When $\pi$ is a divisoral contraction, one can deform the equation on $X$ directly; when $\pi$ is a small contraction, one has to deform the equation on the flip of $M$, so one critical step will be constructing the flips. See Conjectures 4.2 and 4.3 in \cite{LaTi14}. We refer to \cite{SoTi09} for a full description of the minimal model program by K\"ahler-Ricci flow.

\subsection{Algebraic surfaces}

When $M$ is an algebraic surface, the contractions are blow-downs. Hence, exactly as in K\"ahler-Ricci flow \cite{SoWe13}, we have

\begin{theo}\label{theo: surface}
Let $M$ be an algebraic surface and $\omega_0\in 2\pi c_1(L')$ for some line bundle $L'$. Suppose $T<\infty$, then either
\begin{itemize}
\item[{\rm(1)}] $\omega(t)$ collapse at $T$ in the sense that $([\omega_0]+TK_M)^2=0$, or

\item[{\rm(2)}] $\omega(t)$ converges as $t\rightarrow T$ to a blow-down $\pi:M\rightarrow X$, along a finite number of disjoint exceptional curves, in the sense described in Theorem \ref{theo: contraction}.
\end{itemize}
\end{theo}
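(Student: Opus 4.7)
The plan is to separate the two cases by computing the top self-intersection of the limit class $\alpha_T := [\omega_0] + T K_M$, which is nef by definition of $T$. On the surface $M$ one has $\vol(M,\omega_t) = \alpha_t^2 \to \alpha_T^2$ as $t\to T$. If $\alpha_T^2 = 0$, then $\vol(M,\omega_t)\to 0$ and we are in case (1); there is nothing further to prove. I therefore focus on the case $\alpha_T^2>0$ and show it falls into case (2).

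Assuming $\alpha_T^2>0$, the nef class $\alpha_T$ is also big. The base-point-free theorem (already invoked in the proof of Theorem \ref{convergence: CG}, and in any case classical on surfaces) then produces a positive integer $\ell_0$ and a birational morphism
\[
\pi : M \longrightarrow X \subset \mathbb{P}^N
\]
defined by the sections of $\ell_0\alpha_T$, where $X$ is a normal projective surface. The exceptional locus of $\pi$ coincides with the non-ample locus $\mathcal{S}_M$, i.e.\ with the union of irreducible curves $C$ satisfying $\alpha_T\cdot C = 0$.

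The core step is to prove that $\mathcal{S}_M$ is a finite disjoint union of $(-1)$-curves, which is the surface-specific content of the theorem. Let $C$ be an irreducible component of $\mathcal{S}_M$. From $\alpha_T\cdot C = 0$ together with $[\omega_0]\cdot C>0$ one reads off $K_M\cdot C<0$; the Hodge index theorem, applied to $\alpha_T$ big with $\alpha_T\cdot C = 0$, forces $C^2<0$; combined with the adjunction identity $2g(C) - 2 = C^2 + K_M\cdot C$, the sign $K_M\cdot C<0$ leaves only $g(C)=0$ and $C^2 = -1$, so $C$ is a $(-1)$-curve. If two such curves $C_i\ne C_j$ were to meet, the class $D = C_i + C_j$ would satisfy $D^2 = -2 + 2 C_i\cdot C_j \ge 0$ together with $\alpha_T\cdot D = 0$, again contradicting Hodge index. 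Finiteness is immediate from compactness of $M$, since each component has a definite $[\omega_0]$-degree.

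With this structural description, $\pi$ is a divisorial contraction of a finite disjoint family of $(-1)$-curves, and conclusion (2) is obtained by invoking Theorem \ref{theo: contraction} applied to $\pi$. The step I expect to require the most care---and the only genuine obstacle---is verifying that the algebraic contraction $\pi$ produced here by the base-point-free theorem really coincides with the analytic contraction $\Phi_T$ built in Section 3, so that the Gromov--Hausdorff limit $M_T$ is homeomorphic to $X$ rather than to some other contraction of $M$. This identification reduces to the equality, up to a power, of the line bundle $L = \ell_0(L'+TK_M)$ from Section 2 with the line bundle defining $\pi$, which is immediate from the definition of $L$; once this is in hand, Theorem \ref{theo: contraction} delivers items (1)--(4) there, which are exactly statement (2) of the present theorem.
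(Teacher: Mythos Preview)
Your argument is correct and is precisely the standard surface classification (adjunction plus Hodge index to identify the exceptional curves as disjoint $(-1)$-curves, then invoke Theorem~\ref{theo: contraction}) that the paper has in mind when it says ``exactly as in K\"ahler-Ricci flow \cite{SoWe13}''; the paper itself gives no further details beyond that reference. Your final concern about matching $\pi$ with $\Phi_T$ is already absorbed into Theorem~\ref{theo: contraction}, since both maps are defined by the same line bundle $L=\ell_0(L'+TK_M)$, so no extra work is needed there.
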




\end{document}